\title{A Second Order Algebraic Knot Concordance Group}
\author{Mark Powell}
\address{Department of Mathematics, Rawles Hall, 831 East Third Street, Bloomington, IN 47405, USA}
\email{macp@indiana.edu}
\def\R{\mathbb{R}}
\def\Z{\mathbb{Z}}
\def\Q{{\mathbb{Q}}}
\def\C{{\mathcal{C}}}
\def\CP1{\mathbb{CP}^1}
\def\wt{\widetilde}
\def\ol{\overline}
\def\a{\alpha}
\def\g{\gamma}
\def\G{\Gamma}
\def\K{\mathcal{K}}
\def\Ups{\Upsilon}
\def\zh{\Z[\Z \ltimes H]}
\def\zhd{\Z[\Z \ltimes H']}
\def\zhdag{\Z[\Z \ltimes H^{\dag}]}
\def\zhddag{\Z[\Z \ltimes H^{\ddag}]}
\def\zhpc{\Z[\Z \ltimes H^\%]}
\def\COT{Cochran-Orr-Teichner }
\def\COTN{Cochran-Orr-Teichner}
\def\zpx{\Z[\pi_1(X)]}
\def\px{\pi_1(X)}
\def\P{\mathcal{P}}
\def\Y{\mathcal{Y}}
\def\ac2{\mathcal{AC}_2}
\def\ba{\begin{array}}
\def\ea{\end{array}}
\def\bn{\begin{enumerate}}
\def\en{\end{enumerate}}
\def\toiso{\xrightarrow{\simeq}}
\def\mc{\mathcal}
\theoremstyle{plain}
\newtheorem*{theorem*}{Theorem}
\newtheorem{theorem}{Theorem}[section]
\newtheorem{proposition}[theorem]{Proposition}
\newtheorem{lemma}[theorem]{Lemma}
\newtheorem{corollary}[theorem]{Corollary}
\newtheorem{question}[theorem]{Question}
\theoremstyle{definition}
\newtheorem{definition}[theorem]{Definition}
\newtheorem{remark}[theorem]{Remark}
\newtheorem*{fact*}{Fact}
\DeclareMathOperator{\Ext}{Ext}
\DeclareMathOperator{\Hom}{Hom}
\DeclareMathOperator{\im}{im}
\DeclareMathOperator{\coker}{coker}
\DeclareMathOperator{\Id}{Id}
\DeclareMathOperator{\Bl}{Bl}
\newcommand{\eps}{\varepsilon}
\begin{document}

\title{A Second Order Algebraic Knot Concordance Group}



\begin{abstract}
Let $\mathcal{C}$ be the topological knot concordance group of knots $S^1 \subset S^3$ under connected sum modulo slice knots.  Cochran, Orr and Teichner defined a filtration:
\[\C \supset \mathcal{F}_{(0)} \supset \mathcal{F}_{(0.5)} \supset \mathcal{F}_{(1)} \supset \mathcal{F}_{(1.5)} \supset \mathcal{F}_{(2)} \supset \dots \]
The quotient $\mathcal{C}/\mathcal{F}_{(0.5)}$ is isomorphic to Levine's algebraic concordance group; $\mathcal{F}_{(0.5)}$ is the algebraically slice knots.  The quotient $\mathcal{C}/\mathcal{F}_{(1.5)}$ contains all metabelian concordance obstructions.

Using chain complexes with a Poincar\'{e} duality structure, we define an abelian group $\mathcal{AC}_2$, our \emph{second order algebraic knot concordance group}.
We define a group homomorphism $\mathcal{C} \to \mathcal{AC}_2$ which factors through $\mathcal{C}/\mathcal{F}_{(1.5)}$, and we can extract the two stage Cochran-Orr-Teichner obstruction theory from our single stage obstruction group $\mathcal{AC}_2$.  Moreover there is a surjective homomorphism $\mathcal{AC}_2 \to \C/\mathcal{F}_{(0.5)}$, and we show that the kernel of this homomorphism is non--trivial.
\end{abstract}



\maketitle

\section{Introduction}

A \emph{knot} is an oriented, locally flat embedding of $S^1$ in the  3--sphere.  We say that two knots $K$ and $K'$ are \emph{concordant} if there exists an oriented, locally flat embedding of an annulus $C = S^1 \times I$ in $S^3 \times I$ with $C \cap S^3 \times \{0\} = K$ and $C \cap S^3 \times \{1\} = -K'$.  The monoid of knots under connected sum becomes a group when we factor out by the equivalence relation of concordance, called the {\em knot concordance group}, and denoted by $\C$.

This paper unifies previously known obstructions to the concordance of knots by using chain complexes with a Poincar\'e duality structure.  In particular, we attempt to find an algebraic formulation that computes portions of the knot concordance group as filtered by the work of T.~Cochran, K.~Orr and P.~Teichner.

We view this as an initial framework for extending the algebraic theory of surgery of A. Ranicki~\cite{Ranicki3} to classification problems involving $3$-- and $4$-- dimensional manifolds.  In order to apply Ranicki's machinery to low dimensional problems, we incorporate extra information which keeps track of the effect of duality on the fundamental groups involved.

The paper~\cite{COT} introduced a filtration of the classical knot concordance group $\C$ by subgroups:
\[
\C \supset \mathcal{F}_{(0)} \supset \mathcal{F}_{(0.5)} \supset \mathcal{F}_{(1)} \supset \mathcal{F}_{(1.5)} \supset \mathcal{F}_{(2)} \supset \dots.
\]
Knots in the subgroup $\mathcal{F}_{(n)}$ are called $(n)$-solvable knots, for $n \in \frac{1}{2} \mathbb{N} \cup \{0\}$.
The subgroups $\mathcal F_{(n)}$ are geometrically defined.  A knot is $(n)$-solvable if there is {\em some choice} of four manifold whose boundary is zero-framed surgery on the knot, and which is an $n^{th}$ order approximation to the exterior of a slice disk (See Definition~\ref{Defn:COTnsolvable}).

In this paper, we focus on the $(0.5),(1)$ and $(1.5)$ levels of this filtration, corresponding to abelian and metabelian quotients of knot groups and of the fundamental groups of appropriate 4--manifolds.  Our methods extend to the higher terms of the filtration, which will appear in a future paper.  (For an outline, see the appendix of \cite{Powellthesis}.)  As in \cite[Theorem~1.1~and~Remark~1.3.2]{COT}, the quotient $\mathcal{C}/\mathcal{F}_{(0.5)}$ is isomorphic to Levine's algebraic concordance group \cite{Levine}, which we denote $\mathcal{AC}_1$ (see Definition \ref{Defn:AC1}).  We produce a purely {\em algebraically defined} group of concordance invariants, $\mathcal {AC}_{2}$, and prove the following theorem.

\begin{theorem}\label{maintheorem}
There exists a second order algebraic knot concordance group $\ac2$, with a non-trivial homomorphism $\C \to \ac2$ which factors through $\C/\mathcal{F}_{(1.5)}$.  There is a commutative diagram
\[\xymatrix{\C \ar[r] \ar @{->>} [dr] & \ac2 \ar @{->>} [d] \\ & \mathcal{AC}_1}\]
with both of the maps to $\mathcal{AC}_1$ surjections.  A knot whose image in $\ac2$ is trivial has vanishing \COT $(1.5)$-level obstructions.  Moreover, the \COT obstructions can be extracted algebraically from an element of $\ac2$.  In particular the Cheeger-Gromov Von Neumann $\rho$-invariants used in \cite{COT} can be defined purely algebraically and used to detect non-triviality of elements of $\ac2$.  \end{theorem}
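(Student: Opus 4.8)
The proof is essentially the content of the whole paper; here is the plan I would follow.

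\textbf{Constructing $\ac2$.} For a knot $K$ with zero-framed surgery manifold $M_K$, I would first fix the metabelian quotient $\pi_1(M_K) \twoheadrightarrow \Z \ltimes H$, where $H := H_1(M_K;\zt)$ is the Alexander module, and extract from a handle (or CW) decomposition of $M_K$ a $3$-dimensional symmetric Poincar\'e triad in the sense of Ranicki over the ring maps $\Z \to \zt \to \zh$, recording the empty boundary together with the meridian/longitude splitting of the peripheral torus. The group $\ac2$ is then defined as the set of equivalence classes of \emph{abstract} such triads whose first-order data is a nonsingular Blanchfield-type pairing $\Bl$ on $H$ over $\zt$ (``$H_1$-like a knot''), under the purely algebraic analogue of $(1.5)$-solvability: two triads are equivalent when their difference bounds a $4$-dimensional symmetric Poincar\'e pair over $\Z \to \zt \to \zh$ satisfying the homological size constraints extracted, at the chain level, from the $n=1$ stage of Definition~\ref{Defn:COTnsolvable}. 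Addition is a chain-level connected-sum operation on triads, the identity is the class of the unknot's triad, and the inverse of a class is represented by the mirror-reverse triad. Showing $\ac2$ is a well-defined abelian group reduces to an algebraic glueing lemma for symmetric Poincar\'e pairs (for transitivity of the cobordism relation, and compatibility with the size constraints) together with a direct verification that the mirror-reverse construction supplies inverses.

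\textbf{The homomorphism and its formal properties.} Sending $K$ to the class of its triad defines $\C \to \ac2$. Independence of the finitely many choices — basepoint, handle decomposition, and the identification $\pi_1(M_K)/\pi_1(M_K)^{(2)} \cong \Z\ltimes H$ — follows from standard chain-homotopy and naturality arguments for symmetric Poincar\'e complexes; additivity under connected sum is built into the group operation, so the map is a homomorphism. A concordance $C$ from $K$ to $K'$ produces a $4$-manifold $W$ with $\partial W = M_K \sqcup -M_{K'}$, $H_1(W)=\Z$ and $H_2(W;\zt)$ controlled, whose chain complex over the metabelian quotient of $\pi_1(W)$ furnishes exactly the required algebraic cobordism; hence the map is a concordance invariant. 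An honest $(1.5)$-solution for $K$, passed to chains over the appropriate metabelian quotient, is precisely a nullbounding $4$-dimensional symmetric Poincar\'e pair of the allowed type, so the map kills $\mathcal{F}_{(1.5)}$ and factors through $\C/\mathcal{F}_{(1.5)}$. The map $\ac2 \to \mathcal{AC}_1$ is the forgetful functor remembering only $H$ and the Witt class of the rational Blanchfield form; this makes the triangle commute, and surjectivity of both legs to $\mathcal{AC}_1$ follows because Levine's surjection $\C \to \mathcal{AC}_1$ factors through them.

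\textbf{Extracting the \COT obstructions and non-triviality.} Given a class in $\ac2$ whose image in $\mathcal{AC}_1$ vanishes, a metabolizer for $\Bl$ determines a representation of the metabelian group onto a group $\Gamma = \Z \ltimes (H/\text{isotropic submodule})$; one then tensors the defining $3$-dimensional triad — and, when the class is $\ac2$-trivial, the bounding $4$-dimensional pair — first over the skew field $\UG$ and then over the group von Neumann algebra $\NG$. The resulting Hermitian form over $\NG$ on the middle homology of the pair has an $L^2$-signature, and the key point is to prove that this $L^2$-signature is an invariant of the symmetric Poincar\'e pair alone, i.e. a chain-homotopy invariant of the algebraic cobordism class, so that it recovers the Cheeger--Gromov von Neumann $\rho$-invariant with no $4$-manifold in sight. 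If the class in $\ac2$ is trivial this signature vanishes for every choice of metabolizer — which is exactly the $(1.5)$-level \COT statement — and conversely feeding one of the \COT satellite (``genetic infection'') examples with a single nonvanishing metabelian $\rho$-invariant through the construction shows $\C \to \ac2$ is non-trivial. I expect the main obstacle to be two-fold and to sit in this last step together with well-definedness of the equivalence relation: one must arrange the category of triads so that the relevant fundamental-group and representation data are \emph{forced} by the Poincar\'e-duality structure (the ``extra information keeping track of duality on $\pi_1$'' promised in the introduction) rather than chosen by hand, and one must show the metabelian $L^2$-signature is genuinely chain-theoretic, so that the \COT argument — which as written invokes Atiyah's $L^2$-index theorem on a smooth $4$-manifold — goes through on purely algebraic input. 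The glueing lemma's compatibility with the homological size constraints, needed for transitivity and hence for $\ac2$ to be a group at all, is the other delicate point.
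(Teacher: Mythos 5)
Your overall architecture matches the paper's: a monoid of symmetric Poincar\'e triads built from a handle decomposition of the knot exterior with the boundary torus split along the longitude, a consistency isomorphism tying the coefficient module $H$ to $H_1(\Z[\Z]\otimes_{\zh}Y)$, an algebraic $\Z$-homology-cobordism relation giving the group, the forgetful map to $\mathcal{AC}_1$ via a chain-level Blanchfield form, and extraction of the \COT obstruction via representations to $\G=\Z\ltimes\Q(t)/\Q[t,t^{-1}]$ with the Higson--Kasparov surjectivity of the assembly map doing exactly the job you identify (making the reduced $L^{(2)}$-signature a chain-level invariant). Two of the three ``obstacles'' you flag are the right ones and are resolved in the paper essentially as you anticipate.

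There is, however, a genuine gap in your treatment of why $\mathcal{F}_{(1.5)}$ dies. You assert that ``an honest $(1.5)$-solution for $K$, passed to chains over the appropriate metabelian quotient, is precisely a nullbounding $4$-dimensional symmetric Poincar\'e pair of the allowed type.'' It is not: the allowed type requires $H_*(\Z\otimes_{\zhd}V)\cong H_*(S^1;\Z)$ (the algebraic analogue of a slice disc exterior), whereas a $(1.5)$-solution $W$ has $H_2(W;\Z)\neq 0$ in general --- its classes are represented by surfaces of positive genus with $\pi_1$ in $\pi_1(W)^{(2)}$, which is exactly why one cannot surger $W$ geometrically. The missing step is \emph{algebraic surgery on the chain complex of $W$}: one takes the algebraic Thom complex of $(W,M_K)$, uses cochain representatives dual to the $(2)$-Lagrangian as surgery data (the vanishing of $\lambda_2$ on the Lagrangian is precisely the condition that this data forms a symmetric pair), and uses the \emph{dual} $(1)$-Lagrangian classes to verify that the surgered complex has killed $H_2$ without creating new $H_1$, so that the consistency condition survives. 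Your alternative reading --- baking the Lagrangian conditions ``from the $n=1$ stage of Definition~\ref{Defn:COTnsolvable}'' into the equivalence relation itself --- only relocates the problem: gluing two such cobordisms along a common boundary piece does not obviously produce a third with compatible Lagrangian data, so transitivity (and hence the group structure) would fail without the same surgery argument. The paper's choice is to keep the relation a clean homology-circle cobordism (transitivity by Mayer--Vietoris) and perform the surgery once, in the proof that geometric $(1.5)$-solvability implies algebraic $(1.5)$-solvability.
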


We will define (Definition \ref{defn:COTobstructionset_2}) a pointed set which encapsulates the \COT obstruction theory in a single object, which we denote $\mathcal{COT}_{(\C/1.5)}$.  We summarise Theorem \ref{maintheorem} in the following commutative diagram, where dotted arrows are used to denote morphisms of pointed sets.

\[\xymatrix{
 & & \ac2 \ar @{-->} [d] \ar[dr] &  \\
 \C \ar[r] & \C/\mathcal{F}_{(1.5)} \ar @{-->} [r] \ar[dr] \ar[ur] & \mathcal{COT}_{(\C/1.5)} & \mathcal{AC}_1.\\
 & & \C/\mathcal{F}_{(0.5)} \ar[ur]  & }\]

Our aim is to compute the group $\C/\mathcal F_{(1.5)}$ and we view Theorem \ref{maintheorem} as a first step toward this goal.

\begin{question}\label{question:is_it_an_iso}
How close is our homomorphism $\C/\mathcal F_{(1.5)} \to \mathcal {AC}_{2}$ to a (rational) isomorphism?  Can we identify elements in the kernel and cokernel?
\end{question}

The following corollary of Theorem \ref{maintheorem} is a consequence of \cite{Kim04} and \cite{COT2}.

\begin{corollary}\label{corol:infinite_rank_kernel}
The kernel of $\ac2 \to \mathcal {AC}_{1}$ is of infinite rank.
\end{corollary}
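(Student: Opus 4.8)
The plan is to transport the known infinite-rank results for the Cochran--Orr--Teichner filtration through the homomorphism $\C \to \ac2$ of Theorem~\ref{maintheorem}. First I would invoke \cite{COT2} (see also \cite{Kim04}) to produce a family of knots $\{K_i\}_{i \in \N}$ with $K_i \in \mathcal{F}_{(1)}$ for all $i$, with the property that no nontrivial $\Z$-linear combination $\sum_i n_i K_i$ lies in $\mathcal{F}_{(1.5)}$. In those papers this is proved by exhibiting, for each such combination, an admissible metabelian (that is, $(1.5)$-level) representation of the knot group for which the associated Cheeger--Gromov von Neumann $\rho$-invariant, evaluated using its additivity under connected sum, is nonzero; this is exactly the kind of obstruction encoded by $\mathcal{COT}_{(\C/1.5)}$.

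Since the filtration is decreasing and $\mathcal{F}_{(0.5)}$ is precisely the subgroup of algebraically slice knots, each $K_i \in \mathcal{F}_{(1)} \subseteq \mathcal{F}_{(0.5)}$ is algebraically slice, so its image under the surjection $\C \to \mathcal{AC}_1$ is trivial; hence the image of each $K_i$, and of every combination $\sum_i n_i K_i$, under $\C \to \ac2$ lies in $\ker(\ac2 \to \mathcal{AC}_1)$. It remains to show that the classes $[K_i] \in \ac2$ are $\Z$-linearly independent. Here I would use the last part of Theorem~\ref{maintheorem}: the relevant $(1.5)$-level $\rho$-invariants can be defined purely algebraically and depend only on the image of a knot in $\ac2$, compatibly with $\C \to \ac2$. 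Thus if $\sum_i n_i [K_i] = 0$ in $\ac2$ for some $n_i$ not all zero, then, since $\C \to \ac2$ is a homomorphism, the class of $\sum_i n_i K_i$ in $\ac2$ is zero, so all of its algebraic $\rho$-invariants vanish, contradicting the choice of the family. Therefore $\{[K_i]\}$ generates a subgroup of $\ker(\ac2 \to \mathcal{AC}_1)$ isomorphic to $\Z^{\infty}$, and this kernel has infinite rank.

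The step I expect to be the main obstacle is essentially one of matching conventions rather than new mathematics: one must check that the obstruction detecting the independence of the $K_i$ in \cite{COT2} and \cite{Kim04} is exactly the $(1.5)$-level obstruction captured by $\mathcal{COT}_{(\C/1.5)}$ and by the algebraically defined $\rho$-invariants of Theorem~\ref{maintheorem} -- in particular that a single metabelian representation of each knot group suffices to realise it -- and that the additivity of the geometric $\rho$-invariant used to evaluate connected sums is faithfully mirrored by its algebraic counterpart on $\ac2$. For the Casson--Gordon style families underlying these examples this is the case, so the corollary follows.
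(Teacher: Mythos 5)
Your proposal is correct and follows essentially the same route as the paper: the paper's concluding remark derives Corollary \ref{corol:infinite_rank_kernel} precisely by taking the families of \cite{Kim04} and \cite{COT2} in $\mathcal{F}_{(1)}$ (hence algebraically slice, hence mapping into the kernel) and detecting their linear independence in $\ac2$ via the chain-complex Von Neumann $\rho$-invariant of Theorem \ref{Thm:extractingL2signatures}, whose non-vanishing obstructs second order algebraic sliceness. The convention-matching issue you flag (that the detecting representations and the additivity of $\rho$ carry over to the algebraic setting) is exactly what Theorems \ref{Thm:betterstatementzeroAC2_to_zeroL4QG} and \ref{Thm:extractingL2signatures} supply.
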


The first examples of knots which lie in the kernel of the map $\C \to \mathcal{AC}_1$ were given by A.~Casson and C.~Gordon in \cite{CassonGordon}.  Their seminal work was the basis for the work of Cochran, Orr and Teichner.  We expect it to be the case that a knot whose image in $\ac2$ is trivial also has vanishing Casson-Gordon slice obstructions, but we do not directly address this in the present work.

Cochran-Orr-Teichner concordance obstructions are a secondary obstruction theory in a similar manner to obstructions to lifting a map up a tower of fibrations, or extending a map over the skeleta of a CW-complex.  One uses the vanishing at each level of obstructions to define new obstructions, which if they in turn vanish, can be used to define further obstructions, and so on.  A knot being $(n)$-solvable implies that there is some path of vanishing \COT obstructions of length $\lceil n \rceil$.  By contrast, $\ac2$ contains well-defined knot concordance invariants, which do not need to be indexed by choices of lower level vanishing.

The approach is partially inspired by work of Gilmer~\cite{Gilmer}.  He defined an analogue of $\mathcal {AC}_{2}$ which attempted to capture invariants from $\mathcal {AC}_{1}$ together with Casson-Gordon invariants.  That influential, and still important paper, has errors relating to the universal coefficient theorem.  We avoid such problems by defining our group using chain complexes with symmetric structure instead of forms defined on homology.  A chain complex with symmetric structure is a purely algebraic analogue of a Poincar\'e duality space.  Consequently, our work has an altogether different character from Gilmer's.

By avoiding homology pairings and the associated universal coefficient theorem issues in the definition of our invariant, we avoid Ore localisation, the ad-hoc introduction of principal ideal domains, and we obtain a \emph{group} with a \emph{homomorphism} $\C \to \ac2$: the chain complexes behave well under connected sum.  Traditionally, cobordism groups use disjoint union to define their addition operation.  Our operation of addition mirrors much more closely the geometric operation of addition of knots.  The most important advantage derived from defining our obstruction in terms of chain complexes is that we have a \emph{single stage} obstruction which captures the first two main stages of the \COT obstruction theory.  Finally, since we keep the whole chain complex as part of our data, we potentially have more information than can be gleaned from the \COT obstructions, although computable invariants are elusive at present.

\subsection{Organisation of the paper}
The paper is devoted to the proof of Theorem \ref{maintheorem}.  Section \ref{Section:preliminaries} contains some definitions and constructions which will be central to the rest of the paper, including the definition of a symmetric Poincar\'{e} triad and the structure and behaviour of metabelian quotients of knot groups.  We define a monoid of chain complexes $\P$ in Section \ref{Section:monoid}, corresponding to the monoid of knots under connected sum.  In Section \ref{Chapter:algconcordance}, we impose an extra equivalence relation on $\P$ corresponding to concordance of knots, and so define the group $\ac2$.  Section \ref{Section:1.5solvable=>2nd_alg_slice} contains the proof that $(1.5)$-solvable knots map to the trivial element of $\ac2$.  Section \ref{Chapter:extracting_1st_order_obstructions} describes the homomorphism to the algebraic concordance groups and proves the facts about Blanchfield forms which will be required in subsequent sections.  Section \ref{Chapter:COTobstructionthy} defines the \COT obstruction set, before Section \ref{Chapter:extractingCOTobstructions} shows how to extract the \COT obstructions from an element of $\ac2$, showing that $\ac2$ is non-trivial.

\subsection*{Acknowledgements}

This work is a shortened version of my PhD thesis.  Most of all, I would like to thank my supervisor Andrew Ranicki, for all the help he has given me over the last three and a half years, in particular for suggesting this project, and for the ideas and advice which were instrumental in solving so many of the problems encountered.  I would also like to thank in particular Stefan Friedl, Kent Orr and Peter Teichner, and also Spiros Adams-Florou, Julia Collins, Diarmuid Crowley, Wolfgang L\"{u}ck, Tibor Macko, Daniel Moskovich, Paul Reynolds and Dirk Sch\"{u}tz for many helpful and stimulating conversations and generous advice.

\section{Preliminaries}\label{Section:preliminaries}

\subsection{Symmetric structures on chain complexes representing manifolds with boundary}

All of the chain complexes in this paper will come equipped with an algebraic Poincar\'{e} duality structure: the symmetric structure of Mischenko and Ranicki.  In this section we collect the basic constructions which we will need in order to define algebraic cobordisms.  For more details on the material presented here, see \cite[Part~I]{Ranicki3}, from which the definitions in this section are taken, and \cite{Powellthesis}, where I gave an extended explanation of the derivation of symmetric structures, and in particular of how to produce one explicitly for a knot exterior.

In the following we let $A$ be a ring with involution.  A symmetric chain complex over $A$ is a chain complex $C$ together with an element $\varphi \in Q^n(C)$: we refer to \cite[Part~I,~Page~101]{Ranicki3} for the definition of the symmetric $Q$-groups $Q^n(C)$.  A symmetric pair over $A$ is a chain map $f \colon C \to D$ with an element $(\delta\varphi,\varphi) \in Q^{n+1}(f)$.  Likewise, we refer to \cite[Part~I,~Pages~133--4]{Ranicki3} for the definition of the relative $Q$-groups.  Such complexes are said to be Poincar\'{e} if the symmetric structure induces, respectively, the Poincar\'{e} and Poincar\'{e}-Lefschetz duality isomorphisms between cohomology and homology.

We can represent a manifold with boundary in two ways: on the one hand, as a symmetric Poincar\'{e} pair, and on the other hand as a symmetric complex which is \emph{not} Poincar\'{e}.
The algebraic Thom and algebraic Poincar\'{e} thickening constructions of the following definition make the correspondence between these two representations of a manifold with boundary precise.

\begin{definition}[\cite{Ranicki3}]\label{Defn:algThomcomplexandthickening}
An $n$-dimensional symmetric complex $(C,\varphi \in Q^n(C,\eps))$ is \emph{connected} if $H_0(\varphi_0 \colon C^{n-*} \to C_*) = 0.$
The \emph{algebraic Thom complex} of an $n$-dimensional $\eps$-symmetric Poincar\'{e} pair over $A$
\[(f \colon C \to D, (\delta \varphi, \varphi) \in Q^n(f,\eps))\]
is the connected $n$-dimensional $\eps$-symmetric complex over $A$
\[(\mathscr{C}(f),\delta \varphi/\varphi \in Q^n(\mathscr{C}(f),\eps))\]
where $\mathscr{C}(f)$ is the algebraic mapping cone of $f$, and
\begin{eqnarray*}(\delta\varphi/\varphi)_s &:=& \left( \begin{array}{cc} \delta \varphi_s & 0 \\ (-1)^{n-r-1}\varphi_s f^* & (-1)^{n-r+s}T_{\eps} \varphi_{s-1} \end{array} \right) \colon \mathscr{C}(f)^{n-r+s} \\ &=& D^{n-r+s} \oplus C^{n-r+s-1} \to \mathscr{C}(f)_r = D_r \oplus C_{r-1} \;\; (s \geq 0).\end{eqnarray*}

The \emph{boundary} of a connected $n$-dimensional $\eps$-symmetric complex $(C,\varphi \in Q^n(C,\eps))$ over $A$, for $n \geq 1$, is the $(n-1)$-dimensional $\eps$-symmetric \emph{Poincar\'{e}} complex over $A$
\[(\partial C,\partial \varphi \in Q^{n-1}(\partial C,\eps))\]
given by:
\[d_{\partial C} = \left(\begin{array}{cc} d_C & (-1)^r \varphi_0 \\ 0 & \partial^*=d_{C^{n-*}} \end{array} \right) \colon \partial C^{r} = C_{r+1} \oplus C^{n-r} \to \partial C_r = C_{r} \oplus C^{n-r+1};\]
\begin{multline*} \partial \varphi_0 = \left(\begin{array}{cc} (-1)^{n-r-1}T_{\eps}\varphi_1 & (-1)^{r(n-r-1)}\eps \\ 1 & 0 \end{array} \right) \colon \partial C^{n-r-1} = C^{n-r} \oplus C_{r+1} \\ \to \partial C_r = C_{r+1} \oplus C^{n-r}; \end{multline*}
and, for $s \geq 1$,
\begin{multline*} \partial \varphi_s = \left(\begin{array}{cc} (-1)^{n-r+s-1}T_{\eps}\varphi_{s+1} & 0 \\ 0 & 0 \end{array} \right) \colon \partial C^{n-r+s-1} = C^{n-r+s} \oplus C_{r-s+1} \\ \to \partial C_r = C_{r+1} \oplus C^{n-r}. \end{multline*}
See \cite[Part~I,~Proposition~3.4~and~pages~141--2]{Ranicki3} for the full details on the boundary construction.

The \emph{algebraic Poincar\'{e} thickening} of a connected $\eps$-symmetric complex over $A$, $(C,\varphi \in Q^{n}(C,\eps)),$
is the $\eps$-symmetric Poincar\'{e} pair over $A$:
\[(i_C \colon \partial C \to C^{n-*}, (0,\partial \varphi ) \in Q^n(i_C,\eps)),\] where
$i_C = (0 , 1) \colon \partial C = C_{r+1} \oplus C^{n-r} \to C^{n-r}.$
The algebraic Thom complex and algebraic Poincar\'{e} thickening are inverse operations \cite[Part~I,~Proposition~3.4]{Ranicki3}.
\qed \end{definition}

Next, we give the definition of a symmetric Poincar\'{e} triad.  This is the algebraic version of a manifold with boundary where the boundary is split into two along a submanifold; in other words a cobordism of cobordisms which restricts to a product cobordism on the boundary.  Note that our notion is not quite as general as the notion in \cite[Sections~1.3~and~2.1]{Ranicki2}, since we limit ourselves to the case that the cobordism restricted to the boundary is a product.  We also circumvent the more involved definitions of \cite{Ranicki2}, and define the triads by means of \cite[Proposition~2.1.1]{Ranicki2}, with a sign change in the requirement of $i_-$ to be a symmetric Poincar\'{e} pair.

\begin{definition}[\cite{Ranicki2}]\label{Defn:symmPoincaretriad}
An \emph{$(n+2)$-dimensional (Poincar\'{e}) symmetric triad} is a triad of finitely generated projective $A$-module chain complexes:
\[\xymatrix @C+1cm{\ar @{} [dr] |{\stackrel{g}{\sim}}
C \ar[r]^{i_-} \ar[d]_{i_+} & D_- \ar[d]^{f_-} \\ D_+ \ar[r]_{f_+} & Y
}\]
with chain maps $i_{\pm},f_{\pm}$, a chain homotopy $g \colon f_- \circ i_- \simeq f_+ \circ i_+$ and structure maps $(\varphi,\delta\varphi_-,\delta\varphi_+,\Phi)$ such that:
$(C,\varphi)$ is an $n$-dimensional symmetric (Poincar\'{e}) complex,
\[(i_+ \colon C \to D_+,(\delta\varphi_+,\varphi)) \text{ and } (i_- \colon C \to D_-,(\delta\varphi_-,-\varphi))\]
are $(n+1)$-dimensional symmetric (Poincar\'{e}) pairs, and
\[(e \colon D_- \cup_{C} D_+ \to Y,(\Phi,\delta\varphi_- \cup_{\varphi} \delta\varphi_+)) \]
is a $(n+2)$-dimensional symmetric (Poincar\'{e}) pair, where:
\[e = \left(\begin{array}{ccc} f_- \, , & (-1)^{r-1}g \, , & -f_+ \end{array} \right) \colon (D_-)_r \oplus C_{r-1} \oplus (D_+)_r \to Y_r.\]
See \cite[Part~I,~pages~117--9]{Ranicki3} for the formulae which enable us to glue together two chain complexes along a common part of their boundaries with opposite orientations: the \emph{union construction}.  We write $(D'' = D \cup_{C'}D',\delta\varphi'' = \delta\varphi\cup_{\varphi'}\delta\varphi')$ for the union of $(D,\delta\varphi)$ and $(D',\delta\varphi')$ along $(C,\varphi')$.

A chain homotopy equivalence of symmetric triads is a set of chain equivalences:
\[\ba{rclcrcl} \nu_C\colon C &\to& C'; & \nu_{D_-} \colon D_- &\to& D'_- \\
\nu_{D_+} \colon D_+ &\to & D'_+;\text{ and}& \nu_E \colon Y &\to& Y', \ea\]
which commute with the chain maps of the triads up to chain homotopy, and such that the induced maps on $Q$-groups map the structure maps $(\varphi,\delta\varphi_-,\delta\varphi_+,\Phi)$ to the equivalence class of the structure maps $(\varphi',\delta\varphi'_-,\delta\varphi'_+,\Phi')$.  See \cite[Part I,~page~140]{Ranicki3} for the definition of the maps induced on relative $Q$-groups by an equivalence of symmetric pairs.
\qed \end{definition}

\begin{definition}\label{Defn:unionconstruction}
(\cite[Part I,~pages~134--6]{Ranicki3}) An \emph{$\eps$-symmetric cobordism} between symmetric complexes $(C,\varphi)$ and $(C',\varphi')$ is a $(n+1)$-dimensional $\eps$-symmetric Poincar\'{e} pair with boundary $(C \oplus C',\varphi \oplus -\varphi')$:
\[((f_C,f_{C'}) \colon C \oplus C' \to D,(\delta\varphi,\varphi \oplus -\varphi')\in Q^{n+1}((f_C,f_{C'}),\eps)).\]
\qed \end{definition}

The next lemma contains a fact which is key for constructing algebraic cobordisms corresponding to products $M \times I$.  We place it here so as not to interrupt the main text; we will have repeated cause to appeal to it.  Although this is well-known to the experts, I have not found a proof in the literature.

\begin{lemma}\label{lemma:productcobordism}
Given a homotopy equivalence $f \colon (C,\varphi) \to (C',\varphi')$
of $n$-dimensional symmetric Poincar\'{e} chain complexes such that $f^{\%}(\varphi) = \varphi'$, there is a symmetric cobordism $((f,1) \colon C \oplus C' \to C',(0,\varphi \oplus - \varphi')).$  This symmetric pair is also Poincar\'{e}.
\end{lemma}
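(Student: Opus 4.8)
The plan is to exhibit the required pair explicitly by taking $D = C'$, with the chain map $(f,1)\colon C\oplus C'\to C'$ given by the sum of $f$ and the identity, and with zero "top" symmetric structure $\delta\varphi = 0$, so that the relative structure is $(0, \varphi\oplus -\varphi')\in Q^{n+1}((f,1),\eps)$. The first thing I would check is that this relative class is well-defined, i.e.\ that the boundary of $(0,\varphi\oplus-\varphi')$ in the short exact sequence defining the relative $Q$-groups $Q^{n+1}((f,1),\eps)$ is indeed $\varphi\oplus-\varphi'\in Q^n(C\oplus C',\eps)$; this is immediate from the description of the connecting map, since the class we are pairing with is pushed forward from $C\oplus C'$. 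So the only real content is the Poincar\'e condition: one must verify that the duality map
\[
((f,1)^{\%}\text{-induced map})\colon \mathscr{C}((f,1))^{n+1-*}\longrightarrow \mathscr{C}((f,1))
\]
determined by $(0,\varphi\oplus-\varphi')$ is a chain equivalence, equivalently (using the Thom/thickening correspondence of Definition~\ref{Defn:algThomcomplexandthickening}) that the boundary of the algebraic Thom complex $(\mathscr{C}((f,1)),(\delta\varphi/\varphi))$ is contractible.

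The key step is therefore a homological-algebra computation with the mapping-cone formulas from Definition~\ref{Defn:algThomcomplexandthickening}. I would compute the duality chain map of the pair $((f,1),(0,\varphi\oplus-\varphi'))$ in block form. Schematically it has the shape
\[
\begin{pmatrix} 0 & \varphi_0 \oplus -\varphi'_0 \\ \ast & \ast \end{pmatrix}
\]
on $\mathscr{C}((f,1))^{n+1-r} = (C')^{n+1-r}\oplus (C^{n-r}\oplus (C')^{n-r})$, and the point is that because $(C,\varphi)$ and $(C',\varphi')$ are themselves Poincar\'e, the maps $\varphi_0$ and $\varphi'_0$ are equivalences; combined with the fact that $f$ is a homotopy equivalence with $f^{\%}\varphi = \varphi'$, an algebraic-mapping-cone argument (repeated use of the fact that a map of complexes whose cone is built from contractible-by-equivalence pieces is itself an equivalence) shows the total duality map is an equivalence. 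The cleanest way to organise this is probably to observe that the boundary complex $\partial(\mathscr{C}((f,1)))$ — which by the general theory is the "algebraic boundary" of the non-Poincar\'e complex obtained by gluing — is, up to equivalence, the mapping cone of the identity-like map $1 - f^{-1}f$ or of $f$ against itself, and hence contractible. Alternatively, one can cite that $C\oplus C'\to C'$ is modelled geometrically on the inclusion of $M\sqcup -M$ into $M\times I$ and run the algebraic analogue; but since the lemma asks for a proof in the literature's absence, I would give the explicit block computation.

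I expect the main obstacle to be purely bookkeeping: getting the signs in the boundary formula $(\delta\varphi/\varphi)_s$ and in the union/duality maps to come out consistently, since Ranicki's sign conventions propagate through every block of every matrix and a single sign error will make the duality map fail to be a chain map. A secondary subtlety is verifying the chain-map condition for the duality morphism at the chain level (not merely up to homotopy), which forces one to use the higher components $\varphi_s$, $s\ge 1$, and the hypothesis $f^{\%}(\varphi)=\varphi'$ on the nose rather than just up to chain homotopy — this is exactly why the hypothesis is stated with equality. Once the duality map is written down correctly, showing it is an equivalence is a routine five-lemma / mapping-cone argument using that $f$, $\varphi_0$ and $\varphi'_0$ are all equivalences, and the "also Poincar\'e" clause is then automatic.
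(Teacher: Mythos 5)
Your proposal is correct and follows essentially the same route as the paper: take $D=C'$ with vanishing top structure (which is exactly why $f^{\%}(\varphi)=\varphi'$ is needed on the nose), and verify Poincar\'e duality by a mapping-cone/long-exact-sequence computation showing the duality map is controlled by $\varphi'_0$. The paper organises this by noting that $H^r((f,1))$ is generated by classes $(0,y')$ and that the duality map sends these to $-\varphi'_0(y')$, which is the concrete form of your five-lemma argument.
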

\begin{proof}
We need to check that the symmetric structure maps $(0,\varphi\oplus -\varphi') \in Q^{n+1}((f,1))$ induce isomorphisms:
$H^r((f,1)) \xrightarrow{\simeq} H_{n+1-r}(C').$
We use the long exact sequence in cohomology of a pair, associated to the short exact sequence
$0 \to C' \xrightarrow{j} \mathscr{C}((f,1)) \to S(C \oplus C') \to 0$
to calculate the homology $H^r((f,1))$.  The sequence is:
\[ H^{r-1}(C') \xrightarrow{(f^*,1^*)^T} H^{r-1}(C \oplus C') \xrightarrow{\partial} H^r((f,1)) \xrightarrow{j^*} H^r(C') \xrightarrow{(f^*,1^*)^T} H^r(C \oplus C').\]
We have that $\ker((f^*,1^*)^T \colon H^r(C') \to H^r(C \oplus C')) \cong 0,$ so $j^*$ is the zero map, and therefore $\partial$ is surjective.  The image $\im((f^*,1^*)^T\colon H^{r-1}(C') \to H^{r-1}(C) \oplus H^{r-1}(C'))$
is the diagonal, so that the images of elements of the form $(0,y') \in H^{r-1}(C) \oplus H^{r-1}(C')$ generate $H^r((f,1))$.

The map $H^r((f,1)) \to H_{n-r+1}(C')$ generated by $(0,\varphi \oplus - \varphi')$, on the chain level, is
\[\left( 0, \left( \begin{array}{cc} f & 1 \end{array} \right)\left( \begin{array}{cc} \varphi_0 & 0 \\ 0 & -\varphi'_0 \end{array} \right)\right) \colon (C')^r \oplus (C \oplus C')^{r-1} \to C'_{n-r+1}\]
which sends $y' \in H^{r-1}(C')$ to $-\varphi'_0(y')$.  We therefore have an isomorphism on homology since $(C',\varphi')$ is a symmetric Poincar\'{e} complex, so we have a symmetric Poincar\'{e} pair
\[((f,1) \colon C \oplus C' \to C',(0,\varphi \oplus - \varphi')),\]
as claimed.
\end{proof}

\subsection{Second derived covers and connected sum}

Our obstructions, since they aim to capture second order information, work at the level of the second derived covers of the manifolds involved.  We therefore need to understand the behaviour of the second derived quotients of knot groups.  We denote the exterior of a knot $K \subset S^3$ by
\[X := S^3 \setminus \nu K.\]

\begin{proposition}\label{prop:2ndderivedsubgroup}
Let $\phi$ be the quotient map
\[\phi \colon \pi_1(X)/\pi_1(X)^{(2)} \to \pi_1(X)/\pi_1(X)^{(1)} \xrightarrow{\simeq} \Z.\]
Then for each choice of splitting homomorphism $\psi \colon \Z \to \pi_1(X)/\pi_1(X)^{(2)}$
such that $\phi \circ \psi = \Id$, let $t:= \psi(1)$.  There is an isomorphism:
\vspace{-0.1cm}
\[\ba{rcl} \theta \colon \pi_1(X)/\pi_1(X)^{(2)} &\xrightarrow{\simeq}& \Z \ltimes H; \\ g &\mapsto& (\phi(g), gt^{-\phi(g)}), \ea\]
where $H := H_1(X;\Z[\Z])$ is the Alexander module.
\end{proposition}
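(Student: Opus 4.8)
The plan is to show directly that $\theta$ is a group homomorphism, that it is bijective, and that the target is indeed a semidirect product with the stated structure. Let me write $G := \pi_1(X)/\pi_1(X)^{(2)}$ and $H := G^{(1)} = \pi_1(X)^{(1)}/\pi_1(X)^{(2)}$ for brevity. The first observation is that $H$ is abelian, being the abelianization of $\pi_1(X)^{(1)}$, and since $\phi$ has kernel exactly $H$ we have a short exact sequence $1 \to H \to G \xrightarrow{\phi} \Z \to 1$. The choice of splitting $\psi$ with $t = \psi(1)$ gives a set-theoretic (indeed group-theoretic) section, so $G$ is a semidirect product $\Z \ltimes H$ where $n \in \Z$ acts on $H$ by conjugation $h \mapsto t^n h t^{-n}$. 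This is standard; the content of the proposition is the identification of $H$, with this conjugation action, with the Alexander module $H_1(X;\Z[\Z])$ as a $\Z[\Z]$-module.

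First I would verify that $\theta$ as defined is a homomorphism onto $\Z \ltimes H$ with the conjugation action: given $g, g'$ with $\phi(g) = m$, $\phi(g') = m'$, one computes $\theta(gg') = (m+m', gg' t^{-(m+m')})$ and checks this equals the semidirect product of $(m, gt^{-m})$ and $(m', g't^{-m'})$, namely $(m+m',\; (gt^{-m}) \cdot t^m (g't^{-m'}) t^{-m})$; the second coordinates agree after cancellation. Injectivity is immediate since $\theta(g) = (0,e)$ forces $\phi(g) = 0$, hence $g = gt^{-\phi(g)} = e$; surjectivity is clear as $\theta(t^m h) = (m, h)$ for $h \in H$. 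So $\theta$ is an isomorphism $G \xrightarrow{\simeq} \Z \ltimes H$ where the $\Z$-action on $H$ is conjugation by $t$.

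The remaining, and main, step is to identify $H = \pi_1(X)^{(1)}/\pi_1(X)^{(2)}$, equipped with the conjugation action of $t$, with the Alexander module $H_1(X;\Z[\Z])$ as a module over $\Z[\Z] = \Z[t^{\pm 1}]$. Let $\widetilde{X} \to X$ be the infinite cyclic cover corresponding to $\ker(\pi_1(X) \to \Z)$, so $\pi_1(\widetilde{X}) = \pi_1(X)^{(1)}$ and the deck group $\Z$ acts on $\widetilde X$ with $t$ acting via the chosen lift. Then $H_1(\widetilde{X};\Z) = \pi_1(X)^{(1)}/[\pi_1(X)^{(1)},\pi_1(X)^{(1)}] = \pi_1(X)^{(1)}/\pi_1(X)^{(2)} = H$, and $H_1(\widetilde X;\Z) \cong H_1(X;\Z[\Z])$ by definition of homology with local coefficients (via the cellular chain complex of $\widetilde X$ as a $\Z[\Z]$-module complex). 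Under these identifications the deck transformation action on $H_1(\widetilde X;\Z)$ corresponds on the one hand to the $\Z[t^{\pm 1}]$-module structure on $H_1(X;\Z[\Z])$, and on the other hand to the conjugation action of $t$ on $\pi_1(X)^{(1)}/\pi_1(X)^{(2)}$ induced from the $\Z$-section. Matching these two descriptions of the $t$-action — i.e.\ checking that the isomorphism $\pi_1(\widetilde X)^{\mathrm{ab}} \cong H_1(\widetilde X)$ is $\Z$-equivariant, where $\Z$ acts by deck transformations on the right and by conjugation (through $\psi$) on the left — is the technical heart of the argument; it is essentially the statement that the Hurewicz isomorphism is natural with respect to the deck group action, combined with the fact that conjugation in $\pi_1(X)$ by a loop representing the generator induces precisely the deck transformation on $\pi_1(\widetilde X)$. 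Granting this equivariance, $\theta$ restricts on $H$ to a $\Z[t^{\pm 1}]$-module isomorphism $H \xrightarrow{\simeq} H_1(X;\Z[\Z])$, which together with the semidirect product computation above completes the proof. I expect the main obstacle to be bookkeeping the equivariance of the Hurewicz map carefully, in particular pinning down the relationship between the basepoint choices, the section $\psi$, and the choice of lift defining the deck action, so that "conjugation by $t$" on the group side matches "multiplication by $t$" on the Alexander module side without a sign or inverse discrepancy.
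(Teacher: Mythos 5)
Your argument is correct and follows the standard route; the paper itself omits the proof entirely (citing Letsche), and the expected argument is precisely yours: the split extension $1 \to \pi_1(X)^{(1)}/\pi_1(X)^{(2)} \to G \to \Z \to 1$ giving the semidirect product with the explicit formula for $\theta$, plus the identification of $\pi_1(X)^{(1)}/\pi_1(X)^{(2)}$ with $H_1(X;\Z[\Z])$ via the infinite cyclic cover and the deck-equivariant Hurewicz isomorphism. One cosmetic slip: $\theta(t^m h) = (m,\, t^m h t^{-m})$ rather than $(m,h)$, so for surjectivity you should instead evaluate $\theta(h\, t^m) = (m,h)$ — this does not affect the conclusion.
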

\begin{proof}
This is well--known, so we omit the proof.  See e.g. \cite[page~307]{Let00}.
\end{proof}

Although the following proposition is well--known, the careful treatment of inner automorphisms, used to take care of any ostensible dependence on the choice of splitting in Proposition \ref{prop:2ndderivedsubgroup}, will be invaluable in Section \ref{Section:monoid}.

\begin{proposition}\label{prop:2ndderivedsubgroup_adding}
Let $K$, $K^\dag$ and $K^\ddag := K\, \sharp\, K^\dag$ be oriented knots, with associated exteriors $X,X^\dag$ and $X^\ddag$, and denote $H^{\dag} := H_1(X^{\dag};\Z[\Z])$ and $H^{\ddag} := H_1(X^{\ddag};\Z[\Z])$.  The behaviour of the second derived quotients under connected sum is given by:
\[\pi_1(X^{\ddag})/\pi_1(X^{\ddag})^{(2)} \cong \Z \ltimes H^{\ddag} \cong \Z \ltimes (H \oplus H^{\dag}).\]
That is, we can take the direct sum of the Alexander modules.
\end{proposition}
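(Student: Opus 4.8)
The first isomorphism is Proposition~\ref{prop:2ndderivedsubgroup} applied to $K^{\ddag}$; the substance of the statement is the second isomorphism, which amounts to the claim that $H^{\ddag} \cong H \oplus H^{\dag}$ as $\Z[\Z]$-modules, in a way compatible with the semidirect product decompositions. The plan is to realise $X^{\ddag}$ up to homotopy as the union of $X$ and $X^{\dag}$ along a common meridian circle, and then to run the Mayer--Vietoris sequence with $\Z[\Z]$-coefficients.

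First I would record the geometric decomposition. Let $S \subset S^3$ be the $2$-sphere defining the connected sum, meeting $K^{\ddag}$ transversely in two points, so that $A := S \sm \nu K^{\ddag}$ is an annulus splitting $X^{\ddag}$ as $X_0 \cup_A X_0^{\dag}$, where $X_0$ and $X_0^{\dag}$ are the exteriors, in the two bounded $3$-balls, of the knotted arcs representing $K$ and $K^{\dag}$. The core circle $c$ of $A$ separates the two points of $S \cap K^{\ddag}$ on $S$, hence bounds a disc in $S^3$ meeting $K^{\ddag}$ once; so $c$ is a meridian of $K^{\ddag}$, and, lying on the splitting sphere, it is simultaneously a meridian of $K$ and of $K^{\dag}$. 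I would then check that $X_0 \hookrightarrow X$ is a homotopy equivalence (and likewise $X_0^{\dag} \hookrightarrow X^{\dag}$): capping the knotted arc off by a trivial arc exhibits $X$ as $X_0$ with a solid torus $V$ attached along an annulus in $\partial V$ whose core is a longitude of $V$, and such an attachment does not change homotopy type because $V$ deformation retracts onto that annulus rel the annulus. All of this produces a homotopy equivalence $X^{\ddag} \simeq X \cup_{c} X^{\dag}$ carrying $A$ to $c$ in each piece.

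The computation is then routine. Pull the abelianisation $\pi_1(X^{\ddag}) \twoheadrightarrow \Z$ back to every piece; since $\pi_1(c) \to \Z$ carries the generator to the meridian, i.e.\ to $1$, the infinite cyclic cover of $c$ is $\R$, whence $H_0(c;\Z[\Z]) \cong \Z[\Z]/(t-1)$ and $H_i(c;\Z[\Z]) = 0$ for $i \geq 1$. The Mayer--Vietoris sequence of $X^{\ddag} \simeq X \cup_c X^{\dag}$ with $\Z[\Z]$-coefficients then reads
\[ 0 \to H_1(X;\Z[\Z]) \oplus H_1(X^{\dag};\Z[\Z]) \to H_1(X^{\ddag};\Z[\Z]) \to \Z[\Z]/(t-1) \xrightarrow{\ \Delta\ } \bigl(\Z[\Z]/(t-1)\bigr)^{\oplus 2}, \]
and $\Delta$ is injective because all three spaces are connected, so the left-hand map is an isomorphism: $H^{\ddag} \cong H \oplus H^{\dag}$. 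This is an isomorphism of $\Z[\Z]$-modules, hence equivariant for multiplication by $t$, which is exactly the $\Z$-action appearing in the semidirect products; therefore $\Z \ltimes H^{\ddag} \cong \Z \ltimes (H \oplus H^{\dag})$ with $\Z$ acting diagonally. To make the identification canonical I would choose the splitting $\psi$ of Proposition~\ref{prop:2ndderivedsubgroup} for $K^{\ddag}$ with $t = \psi(1)$ the class of the meridian $c$ lying on $A$; this same $t$ serves for $K$ and $K^{\dag}$, so the three isomorphisms $\theta$ are compatible, and any other choice of $\psi$ alters $\theta$ only by conjugation by an element of $H^{\ddag}$.

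The step requiring genuine care is not the Mayer--Vietoris argument itself but the bookkeeping that makes it $\Z[\Z]$-equivariant with matching module structures: one must be sure that $A$ is carried to a common meridian in all three exteriors and that the homotopy equivalences $X_0 \simeq X$, $X_0^{\dag} \simeq X^{\dag}$ respect the inclusion of $A$, so that the single infinite cyclic cover of $X^{\ddag}$ restricts to those of $X$ and $X^{\dag}$. Keeping track of the resulting dependence on $\psi$ up to inner automorphisms --- precisely the point flagged before the statement --- is the other thing to watch, and is what will make the proposition applicable in Section~\ref{Section:monoid}.
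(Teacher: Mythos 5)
Your proof is correct, but it takes a genuinely different route from the paper's. You compute the Alexander module of the connected sum topologically: decompose $X^{\ddag}$ up to homotopy as $X \cup_c X^{\dag}$ along a common meridian circle, observe that $c$ lifts to a contractible line in the infinite cyclic cover, and read off $H^{\ddag}\cong H\oplus H^{\dag}$ from the Mayer--Vietoris sequence with $\Z[\Z]$-coefficients; the semidirect product statement then follows because the isomorphism is $t$-equivariant. The paper instead stays entirely group-theoretic: it writes $\pi_1(X^{\ddag})\cong \pi_1(X)\ast_{\Z}\pi_1(X^{\dag})$ by Seifert--Van Kampen and manipulates the second derived quotient of the amalgamated product directly, obtaining $(\Z\ltimes H)\ast_{\Z}(\Z\ltimes H^{\dag})$ modulo commutators of the two Alexander modules. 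What the paper's approach buys is explicit control of where the distinguished group elements go: it shows, using Levine's result that $1-t$ acts as an automorphism of $H$ so that any $(1,h_1)$ is conjugate to $(1,0)$, that one can normalise both identified meridians to $(1,0)$ by inner automorphisms before gluing --- exactly the bookkeeping needed when the connected sum of triads is defined in Section 3. You flag this issue correctly at the end and sketch the same resolution (choosing $\psi$ via the common meridian $c$, with other choices differing by conjugation), though you assert rather than verify that an arbitrary splitting is related to this one by an inner automorphism; that verification is precisely where Levine's $(1-t)$-automorphism fact enters, and it is worth making explicit since it fails for general $\Z[\Z]$-modules. Your Mayer--Vietoris computation is the more standard way to see that Alexander modules add under connected sum, and is arguably cleaner for that isolated fact.
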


\begin{proof}

First we observe that \vspace{-0.2cm}$$\pi_1(X^\ddag) \cong \pi_1(X) \ast_{\Z} \pi_1(X^\dag),$$ by the Seifert-Van-Kampen theorem: the knot exterior of a connected sum is given by gluing the exteriors of the summands together along neighbourhoods of meridians $S^1 \times D^1 \subset \partial X, \partial X^\dag$.  Note that $H$, $H^{\dag}$ and $H^{\ddag}$ are modules over the group ring $\Z[t,t^{-1}]$ for the same $t$, which comes from the preferred meridian of each of $X,X^{\dag}$ and $X^{\ddag}$ respectively; when the spaces are identified these meridians all coincide.
\[\Z \ltimes H^{\ddag}  \cong \pi_1(X^{\ddag})/\pi_1(X^{\ddag})^{(2)} \cong \pi_1(X) \ast_{\Z} \pi_1(X^{\dag})/(\pi_1(X) \ast_{\Z} \pi_1(X^{\dag}))^{(2)} \]\vspace{-0.5cm}
\begin{equation}\label{eqn:2nd_derived_sums}
  \cong  \left(\frac{\pi_1(X)}{\pi_1(X)^{(2)}} \ast_{\Z} \frac{\pi_1(X^{\dag})}{\pi_1(X^{\dag})^{(2)}}\right)\text{\huge{/}}[\pi_1(X)^{(1)},\pi_1(X^{\dag})^{(1)}] \cong  \frac{(\Z \ltimes H)\ast_{\Z}(\Z \ltimes H^{\dag})}{[\pi_1(X)^{(1)},\pi_1(X^{\dag})^{(1)}]}.
\end{equation}
We now need to be sure that the two group elements which we identify, which we call $g_1 \in \pi_1(X)$ and $g_1^{\dag} \in \pi_1(X^\dag)$, map to $(1,0) \in \Z \ltimes H$ and $(1,0^{\dag}) \in \Z \ltimes H^{\dag}$ respectively under the compositions
\[\pi_1(X) \to \px/\px^{(2)} \to \Z \ltimes H \text{ and }
\pi_1(X^{\dag}) \to \pi_1(X^{\dag})/\pi_1(X^{\dag})^{(2)} \to \Z \ltimes H^{\dag}.\]
If we had chosen $\psi(1) = g_1 \in \pi_1(X)/\pi_1(X)^{(2)} \text{ and } \psi^{\dag}(1) = g_1^{\dag} \in \pi_1(X^{\dag})/\pi_1(X^{\dag})^{(2)}$ then this would be the case and we would have:
\begin{eqnarray*}
\frac{(\Z \ltimes H)\ast_{\Z}(\Z \ltimes H^{\dag})}{[\pi_1(X)^{(1)},\pi_1(X^{\dag})^{(1)}]}
 \cong  \frac{\Z \ltimes (H \ast H^{\dag})}{[H,H^{\dag}]}  \cong  \Z \ltimes (H \oplus H^{\dag}),
\end{eqnarray*}
and the proof would be complete.  The point is that we can always arrange that the image of $g_1$ is $(1,0)$ by applying an inner automorphism of $\Z \ltimes H$, and similarly for $g_1^\dag$ and $\Z \ltimes H^\dag$.  Suppose that $\theta(g_1) = (1,h_1)$.  Recall \cite[Proposition~1.2]{Levine2} that $1-t$ acts as an automorphism of $H$.  We can therefore choose $h'_1 \in H$ such that $(1-t)h'_1 = h_1$.  Then we have that:
\begin{eqnarray*}
(0,h'_1)^{-1}(1,h_1)(0,h'_1) &=& (0,-h'_1)(1,h_1)(0,h'_1) = (1,-h'_1 + h_1)(0,h'_1)\\
= (1,-h'_1 + h_1 + th'_1) &=& (1,h_1 - (1-t)h'_1) = (1,h_1-h_1) = (1,0).
\end{eqnarray*}
So, as claimed, in the last isomorphism of (\ref{eqn:2nd_derived_sums}), we can compose $\theta$ and $\theta^\dag$ with suitable inner automorphisms and so achieve the desired conditions on the meridians which we identify.  Therefore the second derived quotients of the fundamental groups indeed add under connected sum as claimed.
\end{proof}

This concludes the preliminaries that we wish to collect prior to making our main definitions.

\section{A Monoid of Chain Complexes}\label{Section:monoid}

We shall define a set of purely algebraic objects which capture the necessary information to produce concordance obstructions at the metabelian level.  We define a set comprising 3-dimensional symmetric Poincar\'{e} triads over the group ring $\zh$ for certain $\Z[\Z]$-modules $H$.
In some sense, we are to forget that these chain complexes originally arose from geometry, and to perform operations on them purely with reference to the algebraic data which we store with each element.  The primary operation which we introduce in this section is a way to add these chain complexes, so that we obtain an abelian monoid.  On the other hand, we would not do well pedagogically to forget the geometry.  The great merit of the addition operation we put forward here is that it closely mirrors geometric addition of knots by connected sum.

A manifold triad is a manifold with boundary $(X,\partial X)$ such that the boundary splits along a submanifold into two manifolds with boundary, $\partial X = \partial X_{0} \cup_{\partial X_{01}} \partial X_1.$  In our case of interest where $X$ is a knot exterior we have a manifold triad:
\[\xymatrix @R-0.3cm{
S^1 \times S^0 \ar[r] \ar[d] & S^1 \times D^1 \ar[d]\\ S^1 \times D^1 \ar[r] & X,
}\]
where the longitude is divided into two copies of $D^1$.  Such a manifold triad gives rise to a corresponding triad of chain complexes: noting that the knot exterior has the homology of a circle and the inclusion of each of the boundary components $S^1 \times D^1$ induces an isomorphism on $\Z$-homology, we obtain a chain complex $\Z$-homology cobordism from $C_*(S^1\times D^1)$ to itself, which is a product along the boundary.

The chain complexes are taken over the group rings $\zh$ of the semi--direct products which arise, as in Proposition \ref{prop:2ndderivedsubgroup}, as the quotients of knot groups by their second derived subgroups, with $H$ an Alexander module (Theorem \ref{Thm:Levinemodule}).  The crucial extra condition is a consistency condition, which relates $H$ to the actual homology of the chain complex.  Since the Alexander module changes under addition of knots and in a concordance, this extra control is vital in order to formulate a concordance obstruction theory.

We quote the following theorem of Levine, specialised here to the case of knots in dimension 3, and use it to define the notion of an abstract Alexander module.  Recall that we denote the exterior of a knot $K$ by $X := S^3 \setminus \nu K$.

\begin{theorem}[\cite{Levine2}]\label{Thm:Levinemodule}
Let $K$ be a knot, and let  $H:= H_1(X;\Z[\Z]) \cong H_1(X_{\infty};\Z)$  be its Alexander module.  Take $\Z[\Z] = \Z[t,t^{-1}]$.  Then $H$ satisfies the following properties:
\begin{description}
\item[(a)] The Alexander module $H$ is of type $K$: that is, $H$ is finitely generated over $\Z[\Z]$, and multiplication by $1-t$ is a module automorphism of $H$.  These two properties imply that $H$ is $\Z[\Z]$-torsion.
\item[(b)] The Alexander module $H$ is $\Z$-torsion free.  Equivalently, for $\Z[\Z]$-modules of type $K$, the homological dimension\footnote{This is defined as the minimal possible length of a projective resolution.} of $H$ is 1.
\item[(c)] The Alexander module $H$ satisfies Blanchfield Duality: $$\overline{H} \cong \Ext^1_{\Z[\Z]}(H,\Z[\Z]) \cong \Ext^0_{\Z[\Z]}(H,\Q(\Z)/\Z[\Z]) \cong \Hom_{\Z[\Z]}(H,\Q(\Z)/\Z[\Z])$$ where $\overline{H}$ is the conjugate module defined using the involution defined by $t \mapsto t^{-1}$.
\end{description}
Conversely, given a $\Z[\Z]$-module $H$ which satisfies properties (a), (b) and (c), there exists a knot $K$ such that $H_1(X;\Z[\Z]) \cong H$.
\end{theorem}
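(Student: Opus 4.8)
The plan is to prove the two implications separately. The forward direction (a genuine knot's Alexander module has properties (a)--(c)) is a package of standard covering-space and Poincar\'e duality arguments; the converse (realization) is the substantive part.

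\emph{Forward direction, (a).} Since $X$ is a compact manifold it is homotopy equivalent to a finite CW complex, so $C_*(X;\Z[\Z]) = C_*(X_\infty;\Z)$ is a bounded complex of finitely generated free $\Z[\Z]$-modules; as $\Z[\Z]$ is Noetherian, $H$ is finitely generated. To see that $1-t$ acts invertibly I would use the Milnor--Wang exact sequence
\[\cdots \to H_i(X_\infty;\Z) \xrightarrow{t-1} H_i(X_\infty;\Z) \to H_i(X;\Z) \to H_{i-1}(X_\infty;\Z) \to \cdots\]
together with $H_*(X;\Z) \cong H_*(S^1;\Z)$, $H_0(X_\infty;\Z) \cong \Z$ with trivial $t$-action, and the fact that a meridian generates $H_1(X;\Z)$ and maps onto $H_0(X_\infty;\Z)$: reading off degrees $1$ and $2$ gives that $t-1$, hence $1-t$, is injective and surjective on $H$. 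A Nakayama / order-ideal argument over $\Q[\Z]$ then forces $H \otimes_{\Z[\Z]} \Q(\Z) = 0$, i.e.\ $H$ is $\Z[\Z]$-torsion. For (b) I would take a Seifert surface of genus $g$ and use the classical presentation $H \cong \coker(tV - V^T)$ with $V$ the Seifert matrix and $V - V^T$ unimodular. Then $\det(tV-V^T)$ has value $\pm 1$ at $t=1$, so it is not divisible by any prime $p$, and the usual diagram chase (lift to $\Z[\Z]^{2g}$, reduce mod $p$, use that $\det(tV-V^T)\neq 0$ in $\F_p[t^{\pm1}]$ and that $\Z[\Z]$ is a domain) shows $H$ has no $\Z$-torsion. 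The same square presentation is a length-one free resolution, so $H$ has homological dimension $1$; I would also record the converse equivalence for type $K$ modules (a module admitting a square presentation over the domain $\Z[\Z]$ automatically has homological dimension $\le 1$, while a type $K$ module with $\Z$-torsion, e.g.\ $\Z[\Z]/(p,\, t-a)$ with $a\not\equiv 1 \bmod p$, has non-principal first elementary ideal of height two, hence no square presentation and homological dimension $2$).

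\emph{Forward direction, (c).} This is Poincar\'e--Lefschetz duality on $X$. First, the long exact sequence of $(X,\partial X)$ with $\Z[\Z]$-coefficients, together with the computation $H_*(\partial X;\Z[\Z]) = H_*(S^1 \times \R;\Z)$ (so $H_0 \cong \Z \cong H_1$ as trivial modules, the $H_1$ generator being the longitude) and the classical fact that the longitude of a knot lies in $\px^{(2)}$, identifies $H_1(X;\Z[\Z]) \xrightarrow{\cong} H_1(X,\partial X;\Z[\Z])$. Poincar\'e--Lefschetz duality then gives $H^2(X;\Z[\Z]) \cong H_1(X;\Z[\Z]) = H$, and tracking the module structures through the involution built into cohomology with $\Z[\Z]$-coefficients is what produces the conjugate $\overline H$. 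The universal coefficient spectral sequence over $\Z[\Z]$ collapses here: $\Ext^2_{\Z[\Z]}(H_0(X;\Z[\Z]),\Z[\Z]) = \Ext^2_{\Z[\Z]}(\Z,\Z[\Z]) = 0$ since $\Z$ has projective dimension $1$ over $\Z[\Z]$, and $H_2(X;\Z[\Z]) = 0$ (it is $\Z[\Z]$-torsion, as $H_*(X;\Q(\Z)) = 0$ by the vanishing of $H_0$ and $H_1$ and $\chi(X)=0$, and simultaneously torsion-free, being $\ker \partial_2 \subseteq C_2(X;\Z[\Z])$). Hence $\overline H \cong \Ext^1_{\Z[\Z]}(H,\Z[\Z])$. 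Finally, applying $\Hom_{\Z[\Z]}(H,-)$ to $0 \to \Z[\Z] \to \Q(\Z) \to \Q(\Z)/\Z[\Z] \to 0$ and using that $H$ is torsion (so $\Hom_{\Z[\Z]}(H,\Q(\Z)) = 0$) with Alexander polynomial nonzero (so $\Ext^1_{\Z[\Z]}(H,\Q(\Z)) = 0$, as $\Q(\Z)$ is flat and the presentation map of $H$ is invertible over $\Q(\Z)$) yields the connecting isomorphism $\Hom_{\Z[\Z]}(H,\Q(\Z)/\Z[\Z]) \xrightarrow{\cong} \Ext^1_{\Z[\Z]}(H,\Z[\Z])$; together with the tautology $\Ext^0 = \Hom$ this chains to the asserted string of isomorphisms.

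\emph{Converse and main obstacle.} Given $H$ satisfying (a)--(c), the plan is to realize it by a Seifert matrix. Property (b) supplies a length-one resolution $0 \to \Z[\Z]^n \xrightarrow{A(t)} \Z[\Z]^n \to H \to 0$ with $\det A(1)$ a unit (property (a)). Property (c) is then used to replace $A(t)$, after stabilization, by a presentation of the form $tV - V^T$ with $V$ an integer matrix and $V - V^T$ unimodular — this is the translation between non-singular Blanchfield pairings and $S$-equivalence classes of Seifert forms (Levine, Kearton, Trotter). Seifert's realization theorem then produces a knot $K$ with Seifert matrix $V$ (attach bands to a $2$-disk in $S^3$ according to $V$ and push the resulting surface into $S^3$), whose Alexander module is $\coker(tV - V^T) \cong H$. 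I expect the middle step to be the only real obstacle: extracting an honest Seifert form from the purely module-theoretic hypothesis (c) is exactly where that hypothesis is used, and is the content of Levine's knot-module papers. An alternative route — directly constructing a $4$-manifold bounding $0$-framed surgery on the desired knot — would face the same difficulty of converting the duality hypothesis into the correct geometric data.
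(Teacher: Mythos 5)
The paper does not prove this statement at all: it is quoted verbatim (specialised to dimension $3$) from Levine's \emph{Knot modules.\ I}, so there is no in-paper argument to compare yours against. Judged on its own terms, your forward direction is the standard and correct package: Milnor's exact sequence for the infinite cyclic cover gives (a); the Seifert-matrix presentation $tV-V^T$ with $V-V^T$ unimodular gives both halves of (b) (your mod-$p$ diagram chase for $\Z$-torsion-freeness is fine, since $\det(tV-V^T)$ is a unit at $t=1$ and hence nonzero in $\F_p[t^{\pm 1}]$); and Poincar\'e--Lefschetz duality plus the collapsing universal coefficient spectral sequence (using $\Ext^2_{\Z[\Z]}(\Z,\Z[\Z])=0$ and $H_2(X;\Z[\Z])=0$) together with the $\Hom$--$\Ext$ long exact sequence of $0\to\Z[\Z]\to\Q(\Z)\to\Q(\Z)/\Z[\Z]\to 0$ gives (c). These are exactly the arguments one finds in Levine's paper.

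The converse, as you acknowledge, is the substantive half, and your proposal does not actually prove it: it defers precisely the step that constitutes Levine's theorem. Two points there deserve to be named explicitly rather than waved at. First, passing from ``homological dimension $1$'' to a \emph{square} presentation $0\to\Z[\Z]^n\xrightarrow{A(t)}\Z[\Z]^n\to H\to 0$ uses that finitely generated projective $\Z[t,t^{-1}]$-modules are free (or at least stably free, with a stabilisation argument), plus the torsion hypothesis to equate the ranks. Second, and more seriously, condition (c) as stated supplies only an abstract isomorphism $\overline{H}\cong\Hom_{\Z[\Z]}(H,\Q(\Z)/\Z[\Z])$, i.e.\ a nonsingular sesquilinear pairing; to run the Levine--Kearton--Trotter translation into a Seifert matrix one needs a \emph{Hermitian} nonsingular pairing, and producing one from the abstract isomorphism (or arguing that one exists whenever some nonsingular pairing does) is a genuine lemma, not a formality. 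Since the paper itself only cites Levine, your deferral is no worse than the source text, but as a standalone proof the converse remains a sketch with these two gaps.
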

\begin{definition}
We say that a $\Z[\Z]$-module which satisfies (a),(b) and (c) of Theorem \ref{Thm:Levinemodule} is an Alexander module, and denote the class of Alexander modules by $\mathcal{A}$.
\qed\end{definition}

Before we give the definition of our set of symmetric Poincar\'{e} triads, we exhibit some basic symmetric chain complexes which correspond to the spaces $S^0 \times S^1$ and $S^1 \times D^1$.

\begin{definition}\label{defn:basic_chain_complexes}
Let $H$ be an Alexander module.  Let $h_1 \in H$ and define $g_1 := (1,h_1)\in \zh$.  Moreover let $l_a \in \zh$, denote $g_q := l_a^{-1}g_1 l_a$ and let $l_b:= l_a^{-1}$.  The symmetric Poincar\'{e} chain complex $(C',\varphi_{C'} = \varphi \oplus -\varphi)$, of the form:
\[\xymatrix @C+2cm @R+0.5cm{C'^0 \ar[r]^{\delta_1} \ar[d]_{\varphi_0 \oplus -\varphi_0} & C'^1 \ar[d]^{\varphi_0 \oplus -\varphi_0} \ar[dl]^{\varphi_1 \oplus -\varphi_1}\\
C'_1 \ar[r]^{\partial_1} & C'_0,}\]
is given by:
\vspace{-0.3cm}
\[\xymatrix @C+3cm @R+2.5cm{\bigoplus_2\,\zh \ar[r]^{\left(\begin{array}{cc} g_1^{-1}-1 & 0 \\ 0 & g_q^{-1}-1 \\ \end{array} \right)} \ar[d]_{\left(\begin{array}{cc} 1 & 0 \\ 0 & -1 \\ \end{array} \right)} & \bigoplus_2\,\zh \ar[d]^{\left(\begin{array}{cc} g_1 & 0 \\ 0 & -g_q \\ \end{array} \right)} \ar[dl]_{\left(\begin{array}{cc} 1 & 0 \\ 0 & -1 \\ \end{array} \right)}\\
\bigoplus_2\,\zh \ar[r]^{\left(\begin{array}{cc} g_1-1 & 0 \\ 0 & g_q-1 \\ \end{array} \right)} & \bigoplus_2\,\zh.}\]
The \emph{annular chain complexes $D'_{\pm}$} fit into symmetric Poincar\'{e} pairs:
\[(i'_{\pm} \colon C' \to D'_{\pm}, (\delta\varphi_{\pm} = 0,\varphi_{C'}));\]
(they are Poincar\'{e} pairs by Lemma \ref{lemma:productcobordism}), defined as follows:
\[\xymatrix @R+0.3cm @C-0.4cm{D'_- & \zh \ar[rrrrr]^{\left(\begin{array}{c}g_1-1\end{array}\right)} &&&&& \zh \\
C' \ar[u]^{i'_-} \ar[d]_{i'_+} & \bigoplus_2\,\zh \ar[rrrrr]_{\left(\begin{array}{cc}g_1-1 & 0 \\ 0 & g_q-1 \end{array}\right)} \ar[u]^{\left(\begin{array}{c} 1 \\ l_a^{-1} \end{array}\right)} \ar[d]_{\left(\begin{array}{c} l_b^{-1} \\ 1 \end{array}\right)} &&&&& \bigoplus_2\,\zh \ar[u]_{\left(\begin{array}{c} 1 \\ l_a^{-1} \end{array}\right)} \ar[d]^{\left(\begin{array}{c} l_b^{-1} \\ 1 \end{array}\right)} \\
D'_+ & \zh \ar[rrrrr]_{\left(\begin{array}{c}g_q-1\end{array}\right)} &&&&& \zh,
}\]
The chain complexes $D'_{\pm}$ arise by taking the tensor products
$\Z[\Z \ltimes H] \otimes_{\Z[\Z]} C_*(S^1;\Z[\Z]),$
with homomorphisms $\Z[\Z] \to \zh$ given by $t \mapsto g_1$
for $D'_-$ and $t \mapsto g_q$ for $D'_+$.  There is therefore a canonical chain isomorphism $\varpi \colon D'_- \to D'_+$
given by
\vspace{-0.1cm}
\[\xymatrix @C+1cm {\zh \ar[r]^{(g_1-1)}  \ar[d]^{(l_a)} & \zh \ar[d]^{(l_a)}  \\ \zh \ar[r]^{(g_q - 1)} & \zh.}\]
\qed\end{definition}

\begin{definition}\label{Defn:algebraicsetofchaincomplexes}
We define the set $\mathcal{P}$ to be the set of equivalence classes of triples $(H,\Y,\xi)$ where: $H \in \mathcal{A}$ is an Alexander module; $\Y$ is a 3-dimensional symmetric Poincar\'{e} triad of finitely generated projective $\zh$-module chain complexes of the form:
\[\xymatrix @C+0.5cm {\ar @{} [dr] |{\stackrel{g}{\sim}}
(C,\varphi_C) \ar[r]^{i_-} \ar[d]_{i_+} & (D_-,\delta\varphi_-) \ar[d]^{f_-}\\ (D_+,\delta\varphi_+) \ar[r]^{f_+} & (Y,\Phi),
}\]
with the symmetric Poincar\'{e} pairs $(i_{\pm} \colon C \to D_{\pm}, (\delta\varphi_{\pm},\varphi_{C}))$ chain homotopy equivalent to $(i'_{\pm} \colon C' \to D'_{\pm}, (0,\varphi \oplus -\varphi))$ from Definition \ref{defn:basic_chain_complexes}, where the chain maps $f_{\pm}$ induce $\Z$-homology equivalences, and with a chain homotopy $g \colon f_- \circ i_- \sim f_+ \circ i_+ \colon C_* \to Y_{*+1}$; and $$\xi \colon H \toiso H_1(\Z[\Z] \otimes_{\zh} Y)$$ is a $\Z[\Z]$-module isomorphism.

Moreover we require that the maps $\delta\varphi_{\pm}$ have the property that $\varpi \delta\varphi_- \varpi^* = - \delta\varphi_+$, and that there is a chain homotopy $\mu \colon f_+ \circ \varpi \simeq f_-.$  This implies that objects of our set are independent of the choice of $f_-$ and $f_+$.

The maps $f_{\pm}$ must induce $\Z$-homology isomorphisms; note that $H_*(\Z \otimes_{\zh} D_{\pm}) \cong H_*(S^1;\Z)$:
\[(f_{\pm})_* \colon H_*(\Z \otimes_{\zh} D_{\pm}) \xrightarrow{\simeq} H_*(\Z \otimes_{\zh} Y).\]
We call the condition that the isomorphism $\xi \colon H \xrightarrow{\simeq} H_1(\Z[\Z] \otimes_{\zh} Y)$
exists, the \emph{consistency condition}, and we call $\xi$ the \emph{consistency isomorphism}.

We say that two triples $(H,\Y,\xi)$ and $(H^\%,\Y^\%,\xi^\%)$ are equivalent if there exists a $\Z[\Z]$-module isomorphism $\omega \colon H \toiso H^\%$, which induces a ring isomorphism $\zh \toiso \zhpc$, and if there exists a chain equivalence of triads $j \colon \Z[\Z \ltimes H^\%] \otimes_{\zh} \Y \to \Y^\%,$ such that the following diagram commutes:
\[\xymatrix @C+1cm{H \ar[r]_-{\xi}^-{\cong} \ar[d]_{\omega}^-{\cong} & H_1(\Z[\Z] \otimes_{\zh} Y) \ar[d]_-{j_*}^-{\cong} \\
H^\% \ar[r]_-{\xi^\%}^-{\cong} & H_1(\Z[\Z] \otimes_{\zhpc} Y^\%). }\]
The induced map $j_*$ on $\Z[\Z]$-homology makes sense, as there is an isomorphism $\Z[\Z] \cong \Z[\Z] \otimes_{\zhpc} \zhpc,$
so that $$H_1(\Z[\Z] \otimes_{\zh} Y) \toiso H_1(\Z[\Z] \otimes_{\zhpc} \zhpc \otimes_{\zh} Y).$$
It is easy to see that we have indeed described an equivalence relation: symmetry is seen using the inverses of the vertical arrows and transitivity is seen by vertically composing two such squares.
\qed\end{definition}

Given a knot $K$ with exterior $X$, we define a triple $(H,\Y,\xi)$ as follows.  Let $H := \pi_1(X)^{(1)}/\pi_1(X)^{(2)}$ considered as a $\Z[\Z]$-module via the action given by conjugation with a meridian. Let $\Y$ be the triad of handle chain complexes associated to the $\pi_1(X)^{(2)}$--cover of the manifold triad
\vspace{-0.2cm} \[\xymatrix @R-0.2cm{S^1 \times S^0 \ar[r] \ar[d] & S^1 \times D^1_+ \ar[d] \\ S^1 \times D^1_- \ar[r] & X,}\] with symmetric structures for $C_*(S^1 \times S^0)$ and $C_*(S^1\times D^1_{\pm})$ as given in Definition \ref{defn:basic_chain_complexes}, and with the symmetric structure for $C_*(X)$ given by the image under a chain level approximation to the diagonal map\[\Delta \colon C(X;\Z) \to C(X;\zh) \otimes_{\zh} C(X;\zh)\] of a relative fundamental class $[X,\partial X] \in C_3(X;\Z)$.  Lastly, let $\xi$ be the Hurewicz isomorphism \vspace{-0.25cm}$$\xi \colon H \toiso H_1(X;\Z[\Z]) \cong H_1(\Z[\Z] \otimes_{\Z[\Z \ltimes H]} Y).$$  Then we have:
\begin{proposition}\label{prop: fundtriaddefinesanelement}
Let $Knots$ be the set of isotopy classes of locally flat oriented knots.  The above association of $(H,\Y,\xi)$ to a knot $K$ defines a function: $$Knots \to \mathcal{P}.$$
\end{proposition}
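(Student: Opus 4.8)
The plan is to verify that the construction $(H, \mathcal{Y}, \xi)$ attached to a knot $K$ genuinely lands in $\mathcal{P}$, i.e. satisfies all the requirements in Definition \ref{Defn:algebraicsetofchaincomplexes}, and that it is well-defined on isotopy classes. I would organise this into three parts: (1) the triad $\mathcal{Y}$ is a $3$-dimensional symmetric Poincar\'{e} triad over $\zh$ with finitely generated projective chain complexes; (2) the boundary pairs $(i_\pm \colon C \to D_\pm, (\delta\varphi_\pm, \varphi_C))$ and the maps $f_\pm$ satisfy the precise normalisation conditions (chain homotopy equivalence to the basic pairs $(i'_\pm \colon C' \to D'_\pm, (0, \varphi \oplus -\varphi))$ of Definition \ref{defn:basic_chain_complexes}, the $\Z$-homology equivalence property, and the compatibility $\varpi \delta\varphi_- \varpi^* = -\delta\varphi_+$ together with the chain homotopy $\mu \colon f_+ \circ \varpi \simeq f_-$); (3) the consistency isomorphism $\xi$ exists and the whole triple is independent of the auxiliary choices, so that isotopic knots give equivalent triples.

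For part (1), I would use that the knot exterior $X$ is a compact manifold triad, so it has a finite handle decomposition; the associated cellular chain complex of the $\pi_1(X)^{(2)}$-cover, with the deck action, is a finitely generated free $\zp$-complex, and pushing forward along $\pi_1(X) \to \pi_1(X)/\pi_1(X)^{(2)} \cong \Z \ltimes H$ (Proposition \ref{prop:2ndderivedsubgroup}) gives finitely generated free $\zh$-complexes for $C$, $D_\pm$, $Y$. The symmetric structure $\varphi$ comes from a chain-level diagonal approximation applied to the relative fundamental class $[X, \partial X]$, exactly as indicated in the text preceding the Proposition; that this gives a symmetric \emph{Poincar\'{e}} structure on $Y$ relative to its boundary, and a symmetric \emph{Poincar\'{e} triad} once the longitude is split into $D^1_\pm$, is the standard translation of Poincar\'{e}--Lefschetz duality into Ranicki's language — I would cite \cite[Part~I]{Ranicki3} and \cite{Ranicki2} and the detailed derivation in \cite{Powellthesis}. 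The chain homotopy $g$ is part of this package. For the boundary pieces, $S^1 \times S^0$ and $S^1 \times D^1$ with their $\pi_1(X)^{(2)}$-covers restrict to covers classified by the images of their fundamental groups; by Proposition \ref{prop:2ndderivedsubgroup} these images are infinite cyclic generated by (a conjugate of) the meridian, so the covers are copies of $\R$ (up to the deck group), which is why the model complexes $C', D'_\pm$ of Definition \ref{defn:basic_chain_complexes} are of the displayed form, and the identification with $\zh \otimes_{\Z[\Z]} C_*(S^1;\Z[\Z])$ holds. The conjugating elements $l_a, l_b$ and the meridians $g_1, g_q$ record exactly the two different inclusions of the longitude-halves; that the resulting symmetric pairs are the \emph{product} pairs $(0, \varphi \oplus -\varphi)$ follows from Lemma \ref{lemma:productcobordism} applied to the (identity up to deck transformation) homotopy equivalences of the boundary circles, since the restriction of $X \times$(nothing) to the boundary is literally a product $\partial X \times I$-type cobordism in the chain-level sense.

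For part (3), the consistency isomorphism is the Hurewicz map: $H = \pi_1(X)^{(1)}/\pi_1(X)^{(2)} \cong H_1(X; \Z[\Z]) \cong H_1(X_\infty; \Z)$ by Theorem \ref{Thm:Levinemodule}, and the last module is $H_1(\Z[\Z] \otimes_{\zh} Y)$ because tensoring the $\Z\ltimes H$-cover chain complex down to $\Z[\Z]$ computes the homology of the infinite cyclic cover; this is manifestly a $\Z[\Z]$-module isomorphism for the conjugation action. Finally, for well-definedness on isotopy classes I would note that isotopic knots have homeomorphic exteriors (rel the chosen meridian and longitude), so choosing a handle decomposition compatibly gives literally the same data up to chain homotopy; more importantly, the construction a priori depends on a handle decomposition, a choice of diagonal approximation, and a choice of splitting $\psi$ (equivalently of $t$, hence of $g_1$ and of the identification $\pi_1/\pi_1^{(2)} \cong \Z \ltimes H$), and I must check all of these are absorbed by the equivalence relation of Definition \ref{Defn:algebraicsetofchaincomplexes}. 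Different handle decompositions and diagonal approximations give chain equivalent triads with matching symmetric structures (a simple homotopy / acyclic models argument), and a different splitting changes $\theta$ by an inner automorphism of $\Z \ltimes H$ as in Proposition \ref{prop:2ndderivedsubgroup_adding}, which is precisely an allowed isomorphism $\omega$ (take $H^\% = H$, $\omega = \id$, and $j$ induced by conjugation) commuting with $\xi$.

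I expect the main obstacle to be part (2), specifically pinning down the symmetric structures $\delta\varphi_\pm$ on the annular pieces and verifying $\varpi \delta\varphi_- \varpi^* = -\delta\varphi_+$ together with the chain homotopy $\mu \colon f_+\circ\varpi \simeq f_-$ — i.e. checking that the two boundary-collar cobordisms, which geometrically are the \emph{same} product cobordism $\partial X \times I$ viewed through the two halves of the split longitude, really do carry the normalised (zero) relative symmetric structure and are interchanged by the canonical isomorphism $\varpi$ of Definition \ref{defn:basic_chain_complexes} in a way compatible with the gluing maps $f_\pm$ into $Y$. This is a bookkeeping argument about chain-level diagonal approximations restricted to submanifolds and their collars, using naturality of the diagonal and Lemma \ref{lemma:productcobordism}; it is routine in principle but is where all the sign conventions and the careful treatment of the conjugating elements $l_a, l_b$ must be made consistent, and I would refer the reader to \cite{Powellthesis} for the fully explicit verification.
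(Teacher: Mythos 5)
Your proposal is correct and follows essentially the same route as the paper: build the triad from a handle decomposition of the exterior with the model boundary complexes of Definition \ref{defn:basic_chain_complexes}, take $\xi$ to be the Hurewicz map, and absorb all auxiliary choices (of $g_1$, $l_a$, splitting, handle decomposition) into the equivalence relation via inner automorphisms. The only real difference is that the step you flag as the main obstacle is dispatched in the paper by fiat --- the boundary pairs are \emph{taken} to be the models, for which $\delta\varphi_\pm=0$ and $f_+\circ\varpi=f_-$ hold on the nose (so $\mu=0$) --- and the paper additionally spells out the Alexander duality/Mayer--Vietoris argument that $f_\pm$ are $\Z$-homology equivalences, which you list but do not verify.
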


\begin{proof}
We take \[Y := C(X;\zh) := \zh \otimes_{\zpx} C(X;\zpx),\] using the handle chain complex of $X$ with coefficients twisted by the group ring of the fundamental group.  We use a handle decomposition which contains a handle decomposition of a regular neighbourhood of the boundary $\partial X \times I$ as a subcomplex.  We split the boundary into two annular pieces $S^1 \times S^1 = S^1 \times D^1_+ \cup_{S^1 \times S^0} S^1 \times D^1_-$, with the longitude split in two.  We pick a meridian of $K$ and call it $g_1 \in \pi_1(X)$, and we let $l_a$ and $l_b$ be the images in $\pi_1(X)/\pi_1(X)^{(2)}$ of the two halves of the longitude, suitably based.  Take $(C,\varphi_C),(D_{\pm},\delta\varphi_{\pm})$ and $i_{\pm}$ to be the complexes defined in Definition \ref{defn:basic_chain_complexes}.  Define the maps $f_{\pm}$ and $g$ to be the maps induced by the inclusion of the boundary. The symmetric structure $\Phi$ on $Y_* = C_*(X;\zh)$ is given, as described, by the image of a relative fundamental class under a diagonal approximation chain map.  Note that for the model chain complexes, $\varpi = (l_a) \colon (D_-)_i \to (D_+)_i$ so $f_+ \circ \varpi = f_-$ and we can take $\mu = 0$.

It is important that our objects do not depend on choices, so that equivalent knots define equivalent triads.
Different choices of $l_a$ and $l_b$ affect these elements only up to a conjugation, or in other words an application of an inner automorphism, which means we can vary $C, D_{+}$ and $f_+$ by a chain isomorphism and obtain chain equivalent triads.  A different choice of element $g_1 = (1,h_1) \in \Z \ltimes H$ is related by a conjugation, or in other words an application of an inner automorphism, as in the proof of Proposition \ref{prop:2ndderivedsubgroup_adding}, so that we can change $C, D_{\pm}$ and $Y$ by chain isomorphisms and obtain chain equivalent triads.  The point is that we need to make choices of $g_1$ and of $l_a$ in order to write down a representative of an equivalence class of symmetric Poincar\'{e} triads, but still different choices yield equivalent triads.  We investigate the effect of such changes on the consistency isomorphism $\xi$.  A change in $l_a$ does not affect the isomorphism $\xi$.  A change in $g_1$ affects $\xi$ as follows.  When we wish to change the boundary maps and chain maps in a triad by applying an inner automorphism, conjugating by an element $h \in \Z \ltimes H$ say, we define the chain equivalence of triads $\Y \to \Y^\%$ which maps basis elements of all chain groups as follows: $e_i \mapsto he_i$: $\Y^\%$ has the same chain groups as $\Y$ but with the relevant boundary maps and chain maps conjugated by $h$.  This induces an isomorphism which by a slight abuse of notation we denote $h_* \colon H_1(\Z[\Z] \otimes_{\zh} Y) \toiso H_1(\Z[\Z] \otimes_{\zhpc} Y^\%)$.  We take $\omega \colon H \to H^\% = H$ as the identity.  In order to obtain an equivalent triple, we therefore take $\xi^\% = h_* \circ \xi$.

An isotopy of knots induces a homeomorphism of the exteriors $X \xrightarrow{\approx} X^\%$, fixing the boundary, which itself induces an isomorphism $$\omega \colon \pi_1(X)^{(1)}/\pi_1(X)^{(2)} = H \toiso \pi_1(X^\%)^{(1)}/\pi_1(X^\%)^{(2)} = H^\%.$$
Likewise the isotopy induces an equivalence of triads $\zhpc \otimes_{\zh} \Y \to \Y^\%$.  The geometrically defined maps $\xi$ and $\xi^\%$ fit into the commutative square as required in Definition \ref{Defn:algebraicsetofchaincomplexes}.



Finally, we should check that the conditions on homology for an element of $\P$ are satisfied.  First, $\Z \otimes_{\zh} D_{\pm}$ is given by $\Z \xrightarrow{0} \Z,$ which has the homology of a circle.  Alexander duality or an easy Mayer-Vietoris argument using the decomposition of $S^3$ as $X \cup_{\partial X \approx S^1 \times S^1} S^1 \times D^2$ shows that $H_*(C_*(X;\Z)) \cong H_*(S^1;\Z)$, with the generator of $H_1(X;\Z)$ being any of the meridians.  So the chain maps $\Id \otimes_{\zh} f_{\pm} \colon \Z \otimes D_{\pm} \to C_*(X;\Z)$ induce isomorphisms on homology.

The consistency condition is satisfied, since we have the canonical Hurewicz isomorphism $H \toiso H_1(X;\Z[\Z])$ as claimed. Therefore, we have indeed defined an element of $\mathcal{P}$.
\end{proof}

\begin{remark}
In \cite{Powellthesis}, I gave an algorithm to construct a symmetric Poincar\'{e} triad explicitly, given a diagram of a knot, using a handle decomposition of the knot exterior.  The novel part of this was to construct the symmetric structure maps explicitly, at the level of the universal cover.
\end{remark}

We now define the notion of addition of two triples $(H,\mathcal{Y},\xi)$ and $(H^\dag,\mathcal{Y}^{\dag},\xi^\dag)$ in $\mathcal{P}$.  In the following, the notation should be transparent: everything associated to $\mathcal{Y}^{\dag}$ will be similarly decorated with a dagger.

\begin{definition}\label{Defn:connectsumalgebraic}
We define the sum of two triples
\[(H^\ddag,\mathcal{Y}^{\ddag},\xi^\ddag) = (H,\mathcal{Y},\xi) \, \sharp \, (H^\dag,\mathcal{Y}^{\dag},\xi^\dag),\]
as follows.  The first step is to make sure that the two triads are over the same group ring.  Pick a representative in the equivalence class of each of the triples on the right hand side which satisfy $g_1 = (1,0)$ and $g_1^\dag = (1,0^\dag)$ respectively.  It was explained how to achieve this, with the application of inner automorphisms of $\Z \ltimes H$ and $\Z \ltimes H^\dag$, in the proofs of Propositions \ref{prop:2ndderivedsubgroup_adding} and \ref{prop: fundtriaddefinesanelement}.  Now define $H^{\ddag} := H \oplus H^{\dag}$.
We use the homomorphisms
\vspace{-0.1cm}
\[\ba{rcl} \Z \ltimes H & \to & \Z \ltimes (H \oplus H^\dag); \\ (n,h) & \mapsto &(n,(h,0^\dag)) \ea \vspace{-0.2cm}\]
and
\vspace{-0.1cm}
\[\ba{rcl} \Z \ltimes H^\dag & \to & \Z \ltimes (H \oplus H^\dag); \\ (n,h^\dag) & \mapsto &(n,(0,h^\dag)) \ea\]
to form the tensor products
$\zhddag \otimes_{\zh} \Y $
and
$\zhddag \otimes_{\zhdag} \Y^{\dag}$,
so that both symmetric Poincar\'{e} triads are over the same group ring.  This will be assumed for the rest of the present definition without further comment.

The next step is to exhibit a chain equivalence $\nu \colon C^{\dag} \xrightarrow{\sim} C.$
We show this for the models for each chain complex from Definition \ref{defn:basic_chain_complexes}, since any $C,C^{\dag}$ which can occur is itself chain equivalent to these models.  In fact, for the operation of connected sum which we define here, we describe how to add our two symmetric Poincar\'{e} triads $\Y$ and $\Y^{\dag}$ using the models given for $i_{\pm} \colon (C,\varphi_C) \to (D_{\pm},\delta\varphi_{\pm})$ and $i^{\dag}_{\pm} \colon (C^{\dag},\varphi_{C^{\dag}}) \to (D^{\dag}_{\pm},\delta\varphi^{\dag}_{\pm})$ in Definition \ref{defn:basic_chain_complexes}, since there is always an equivalence of symmetric triads mapping to one in which $C,C^{\dag}$ and $D^{\dag}_{\pm}$ have this form, by definition.  Note that, to achieve this with $g_1 = (1,0) = g_1^\dag$, we may have to change the isomorphisms $\xi$ and $\xi^\dag$ as in the proof of Proposition \ref{prop: fundtriaddefinesanelement}.

The chain isomorphism $\nu \colon C^{\dag}_* \to C_*$ is given by:
\[\xymatrix @R+1cm @C+2cm { \bigoplus_2\,\zhddag \ar[r]_{\left(\begin{array}{cc}  g^{\dag}_1-1 & 0 \\ 0 & g^{\dag}_q-1 \end{array}\right)} \ar[d]_{\left(\begin{array}{cc} 1 & 0 \\ 0 & (l_a^{\dag})^{-1}l_a \end{array}\right)} & \bigoplus_2\,\zhddag \ar[d]^{\left(\begin{array}{cc} 1 & 0 \\ 0 & (l_a^{\dag})^{-1}l_a \end{array}\right)} \\
\bigoplus_2\,\zhddag \ar[r]^{\left(\begin{array}{cc} g_1-1 & 0  \\ 0 & g_q-1 \end{array}\right)} & \bigoplus_2\,\zhddag.
}\]
In order to see that these are chain maps we need the relation $g_1^{\dag} = g_1 \in \Z \ltimes H^{\ddag}$ which, since by definition
$g_q = l_a^{-1}g_1l_a$ and $g_q^{\dag} = (l_a^{\dag})^{-1}g_1^{\dag}l_a^{\dag}$ implies that $g_q = l_a^{-1}l_a^{\dag}g_q^{\dag}(l_a^{\dag})^{-1}l_a.$
We can also use this to calculate that $\nu (\varphi^{\dag}\oplus -\varphi^{\dag}) \nu^{*} = \varphi \oplus -\varphi$.  Recall that we also have a chain isomorphism $\varpi \colon D_-^{\dag} = D_- \to D_+$.

We now glue the two symmetric triads together.  The idea is that we are following the geometric addition of knots, where the neighbourhoods of a chosen meridian of each knot get identified.  We have the following diagram:
\[\xymatrix @R+0.4cm @C-0.41cm {
(D_-,0 = \delta\varphi_-) \ar[d]^{f_-} & (C,\varphi \oplus -\varphi=\varphi_C) \ar[l]_-{i_-} \ar[d]_{i_+} \ar[dl]_{\stackrel{g}{\sim}} & (C^{\dag},\varphi^{\dag} \oplus -\varphi^{\dag}=\varphi_{C^{\dag}}) \ar[l]_-{\stackrel{\nu}{\simeq}} \ar[d]^{i_-^{\dag}} \ar[r]^-{i_+^{\dag}} \ar[dr]^{\stackrel{g^{\dag}}{\sim}} & (D_+^{\dag},0=\delta\varphi_+^{\dag}) \ar[d]^{f_+^{\dag}} \\
(Y,\Phi) & (D_+,0=\delta\varphi_+) \ar[l]_-{f_+} & (D_-^{\dag},0=\delta\varphi_-^{\dag}) \ar[r]^-{f_-^{\dag}} \ar[l]_-{\stackrel{\varpi}{\simeq}} & (Y^{\dag},\Phi^{\dag})
}\]
where the central square commutes.  We then use the union construction from \cite[Part~I,~pages~117--9]{Ranicki3} to define $\Y^{\ddag}$:
\[\xymatrix @C+0.5cm {\ar @{} [dr] |{\stackrel{g^{\ddag}}{\sim}}
(C^{\ddag},\varphi_{C^{\ddag}}) \ar[r]^{i^{\ddag}_-} \ar[d]_{i^{\ddag}_+} & (D^{\ddag}_-,\delta\varphi^{\ddag}_-) \ar[d]^{f^{\ddag}_-}\\ (D^{\ddag}_+,\delta\varphi^{\ddag}_+) \ar[r]_{f^{\ddag}_+} & (Y^{\ddag},\Phi^{\ddag});
}\]
\[(C^{\ddag},\varphi_{C^{\ddag}}):=(C^{\dag},\varphi_{C^{\dag}});\;i_+^{\ddag} := i_+^{\dag};\; i_-^{\ddag} := i_-\circ \nu;\]
\[(D_-^{\ddag},\delta\varphi_-^{\ddag}):= (D_-,\delta\varphi_-=0);\;(D_+^{\ddag},\delta\varphi_+^{\ddag}):= (D_+^{\dag},\delta\varphi_+^{\dag}=0);\]
\[(Y^{\ddag},\Phi^{\ddag}) := (\mathscr{C}((-f_+ \circ \varpi,f_-^{\dag})^T \colon D_-^{\dag} \to Y \oplus Y^{\dag}),\Phi \cup_{\delta\varphi_-^{\dag}} \Phi^{\dag}),\]
so that
\[Y^{\ddag}_r:= Y_r \oplus (D_-^{\dag})_{r-1} \oplus Y_r^{\dag};\]
\[d_{Y^{\ddag}} := \left(\begin{array}{ccc} d_Y & (-1)^{r}f_{+}\circ \varpi & 0 \\ 0 & d_{D_-^{\dag}} & 0 \\ 0 & (-1)^{r-1}f^{\dag}_{-} & d_{Y^{\dag}}
\end{array}\right)\colon Y^{\ddag}_r \to Y^{\ddag}_{r-1};\]
\[f^{\ddag}_{-} := \left(
              \begin{array}{ccc}
                f_- &
                0 &
                0
              \end{array}
            \right)^T \colon (D_-^{\ddag})_r = (D_-)_r \to Y^{\ddag}_r=Y_r \oplus (D_-^{\dag})_{r-1} \oplus Y^{\dag}_r;
\]
\[f^{\ddag}_+ = \left(
              \begin{array}{ccc}
                0 &
                0 &
                f^{\dag}_{+}
              \end{array}
            \right)^T \colon (D_+^{\ddag})_r = (D^{\dag}_+)_r \to Y^{\ddag}_r=Y_r \oplus (D_-^{\dag})_{r-1} \oplus Y^{\dag}_r;\]
\[\Phi^{\ddag}_s := (\Phi \cup_{\delta\varphi_-^{\dag}} \Phi^{\dag})_s = \left(
                        \begin{array}{ccc}
                          \Phi_s & 0 & 0 \\
                          0 & 0 & 0 \\
                          0 & 0 & \Phi^{\dag}_s \\
                        \end{array}
                      \right)\colon\]
\begin{multline*}(Y^{\ddag})^{3-r+s} = Y^{3-r+s} \oplus (D_-^{\dag})^{2-r+s} \oplus (Y^{\dag})^{3-r+s} \to Y^{\ddag}_r = Y_r \oplus (D_-^{\dag})_{r-1} \oplus Y^{\dag}_r\;\; (0 \leq s \leq 3);\end{multline*}
\[g^{\ddag}:= \left(
              \begin{array}{ccc}
                g \circ \nu &
                (-1)^{r+1} i_-^{\dag} &
                g^{\dag}
              \end{array}\right)^T \colon C^{\ddag}_r = C_r^{\dag} \to Y_{r+1}^{\ddag} = Y_{r+1} \oplus (D_-^{\dag})_r \oplus Y^{\dag}_{r+1}.\]

The mapping cone is of the chain map $(-f_+ \circ \varpi,f_-^{\dag})^T$, with a minus sign to reflect the geometric fact that when one adds together oriented knots; one must identify the boundaries with opposite orientations coinciding, so that the resulting knot is also oriented.

We therefore have the chain maps $i_{\pm}^{\ddag}$, given by:
\[\xymatrix @R+1cm @C-0.4cm{
D^{\ddag}_- = D_-  & \zhddag \ar[rrrrr]^{\left(\begin{array}{c}g_1-1\end{array}\right)}  &&&&& \zhddag \\
C^{\ddag}=C^{\dag} \ar[u]^{i_-^{\ddag}=i_- \circ \nu} \ar[d]_{i_+^{\ddag} = i_+^{\dag}} & \bigoplus_2\,\zhddag \ar[rrrrr]_{\left(\begin{array}{cc}g^{\dag}_1-1 & 0 \\ 0 & g^{\dag}_q-1 \end{array}\right)} \ar[u]^{\left(\begin{array}{c} 1 \\ (l_a^{\dag})^{-1} \end{array}\right)} \ar[d]_{\left(\begin{array}{c} (l_b^{\dag})^{-1} \\ 1 \end{array}\right)} &&&&& \bigoplus_2\,\zhddag \ar[u]_{\left(\begin{array}{c} 1 \\ (l_a^{\dag})^{-1} \end{array}\right)} \ar[d]^{\left(\begin{array}{c} (l_b^{\dag})^{-1} \\ 1 \end{array}\right)} \\
D_+^{\ddag}=D_+^{\dag} & \zhddag \ar[rrrrr]_{\left(\begin{array}{c}g^{\dag}_q-1\end{array}\right)} &&&&& \zhddag,
}\]
which means we can take $g_1^{\ddag}:= g_1^{\dag}=g_1 \in \Z \ltimes H^{\ddag} = \Z \ltimes (H \oplus H^{\dag}),$
$l_a^{\ddag}:= l_a^{\dag} \in \Z \ltimes H^{\ddag}$ and $l_b^{\ddag}:= l_b^{\dag} \in \Z \ltimes H^{\ddag},$
so that
$g_q^{\ddag} := g_q^{\dag} \in \Z \ltimes H^{\ddag}.$
We have a chain isomorphism $\varpi^{\dag} \colon D_- = D^{\dag}_- \to D_+^{\dag}$.  To construct a chain homotopy $\mu^{\ddag} \colon (0, 0,  f_+^{\dag} \circ \varpi^{\dag})^T \simeq  ( f_- , 0 , 0 )^T$ we first use
$\mu^{\dag} \colon (  0 , 0 , f_+^{\dag}\circ\varpi^{\dag} )^T \simeq (  0 , 0 , f_-^{\dag})^T.$
We then have a chain homotopy given by:
\begin{align*}&(0 ,\Id ,0  )^T \colon (D_-^{\dag})_0 \to Y^{\ddag}_1 = Y_1 \oplus (D_-^{\dag})_0 \oplus Y_1^{\dag} \text{ and}\\
&(0 ,-\Id , 0 )^T \colon (D_-^{\dag})_1 \to Y^{\ddag}_2 = Y_2 \oplus (D_-^{\dag})_1 \oplus Y_2^{\dag},\end{align*}
which shows that
\[( 0 , 0 ,f_-^{\dag})^T \simeq  (  f_+ \circ \varpi , 0 , 0  )^T \colon D_-^{\dag} \to \mathscr{C}((-f_+ \circ \varpi, f_-^{\dag})^T).\]
We finally have $\mu \colon ( f_+ \circ \varpi , 0 , 0  )^T \simeq (  f_- , 0 , 0 )^T.$  Combining these three homotopies yields
\[\mu^{\ddag} \colon ( 0 , 0 , f_+^{\dag} \circ \varpi^{\dag} )^T \simeq (  f_- , 0 , 0 )^T.\]
This completes our description of the symmetric Poincar\'{e} triad
\[\mathcal{Y}^{\ddag} := \mathcal{Y} \, \sharp \, \mathcal{Y}^{\dag}.\]
Finally, easy Mayer-Vietoris arguments show that $f^{\ddag}_{\pm} \colon H_*(D^\ddag_\pm;\Z) \toiso H_*(Y^\ddag;\Z)$ are isomorphisms and that there is a consistency isomorphism \[\xi^{\ddag} \colon  H^{\ddag} \toiso H_1(\Z[\Z] \otimes_{\zhddag} Y^{\ddag}),\]
which shows that the consistency condition is satisfied and defines the third element of the triple
\vspace{-0.2cm}
\[(H^\ddag,\mathcal{Y}^{\ddag},\xi^\ddag) = (H,\mathcal{Y},\xi) \, \sharp \, (H^\dag,\mathcal{Y}^{\dag},\xi^\dag) \in \P.\]
This completes the definition of the addition of two elements of $\P$.
\qed \end{definition}

\begin{proposition}\label{Prop:abelianmonoid}
The sum operation $\sharp$ on $\P$ is abelian, associative and has an identity, namely the triple containing the fundamental symmetric Poincar\'{e} triad of the unknot.  Therefore, $(\P,\sharp)$ is an abelian monoid.

Let ``$Knots$'' denote the abelian monoid of isotopy classes of locally flat oriented knots in $S^3$ under the operation of connected sum.  Then the function $Knots \to \P$ from Proposition \ref{prop: fundtriaddefinesanelement} becomes a monoid homomorphism.
\end{proposition}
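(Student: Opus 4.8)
The plan is to verify each monoid axiom by producing explicit chain equivalences of triads which intertwine all the structure maps, chain homotopies and consistency isomorphisms, relying throughout on the naturality and (co)associativity of Ranicki's algebraic union construction, and then to identify the geometric connected sum with the algebraic operation $\sharp$ via a Mayer-Vietoris decomposition of the exterior of $K\,\sharp\,K^\dag$.

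First I would check that $\sharp$ is well defined on $\P$. The recipe in Definition~\ref{Defn:connectsumalgebraic} requires choosing representatives with $g_1 = (1,0)$ and $g_1^\dag = (1,0^\dag)$ and using the models of Definition~\ref{defn:basic_chain_complexes} for $C$, $C^\dag$ and $D_\pm^\dag$. Two such choices differ by conjugating all boundary and chain maps by elements of $\Z \ltimes H$ and $\Z \ltimes H^\dag$, exactly as in the proofs of Propositions~\ref{prop:2ndderivedsubgroup_adding} and~\ref{prop: fundtriaddefinesanelement}; such conjugations give chain isomorphisms of triads and adjust $\xi$, $\xi^\dag$ by the corresponding inner automorphisms. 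Since the union construction is natural under chain equivalences of the pieces, these adjustments propagate to an equivalence of the glued triads, and likewise replacing either summand by an equivalent triple yields an equivalent sum, using the compatibility square of Definition~\ref{Defn:algebraicsetofchaincomplexes}.

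For commutativity, the swap $H \oplus H^\dag \toiso H^\dag \oplus H$ induces a ring isomorphism $\zhddag \toiso \Z[\Z \ltimes (H^\dag \oplus H)]$, and since $D_-^\dag = D_-$ with $\varpi \colon D_- \toiso D_+$ the two mapping cones $\mathscr{C}((-f_+\circ\varpi,\,f_-^\dag)^T)$ and $\mathscr{C}((-f_+^\dag\circ\varpi^\dag,\,f_-)^T)$ agree after reordering direct summands and a sign change that is absorbed into a chain isomorphism respecting $\Phi \cup \Phi^\dag$. Associativity is handled in the same spirit, combining associativity of $\oplus$ of Alexander modules with the well-known associativity up to natural chain equivalence of iterated union constructions; one writes down the evident identification of the two triple-glued complexes and checks it carries the structure maps, the $g^\ddag$-type homotopies and the consistency isomorphisms of one bracketing to those of the other. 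For the identity, the fundamental triad of the unknot has $H = 0$, lies over $\Z[\Z]$, and $Y_{\text{unknot}} \simeq C_*(S^1;\Z[\Z])$; gluing it onto $(H,\Y,\xi)$ along a meridian is, at the level of the union construction, chain homotopy equivalent to attaching a collar, so the result is equivalent to $\Y$ compatibly with $\xi$.

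Finally, for the homomorphism statement: by Proposition~\ref{prop:2ndderivedsubgroup_adding} the Alexander module of $K^\ddag = K\,\sharp\,K^\dag$ is $H \oplus H^\dag$ with precisely the $\Z[\Z]$-action used to define $\sharp$, and the Seifert-Van-Kampen decomposition $X^\ddag = X \cup_{S^1 \times D^1} X^\dag$ (gluing along a regular neighbourhood of a common meridian, with the longitude of $X^\ddag$ assembled from the two half-longitudes) realises the handle chain complex $C_*(X^\ddag;\zhddag)$ as exactly the algebraic mapping cone appearing in Definition~\ref{Defn:connectsumalgebraic}. I would then check that a diagonal-approximation representative of $[X^\ddag,\partial X^\ddag]$ restricts to such representatives on $X$ and $X^\dag$ and glues to $\Phi \cup \Phi^\dag$, and that the Hurewicz consistency isomorphisms are compatible under the Mayer-Vietoris identification. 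I expect this last step to be the main obstacle: one must follow the symmetric structure maps and the chain homotopies $g^\ddag$ and $\mu^\ddag$ through the geometric gluing carefully enough to obtain a genuine equivalence of triads, not merely homology-level agreement. Everything else reduces to formal properties of the algebraic Thom and union constructions, after which the remaining verifications are routine.
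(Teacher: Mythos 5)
The paper does not actually prove this proposition: it defers entirely to \cite[Proposition~6.8]{Powellthesis}, remarking only that the claim is ``intuitively plausible'' because the algebraic connected sum mirrors the geometric one. Your outline is precisely the strategy that remark gestures at, and it correctly isolates the points that make the deferred proof long --- independence of the choices of $g_1$ and $l_a$ up to inner automorphism, commutativity and associativity of the union construction up to chain equivalence of triads, collapse of the mapping cone when one summand is the unknot's triad, and the Mayer--Vietoris identification of $C_*(X^\ddag)$ with the algebraic mapping cone together with compatibility of the symmetric structures and consistency isomorphisms. So your approach is the intended one; what remains is to actually execute the chain-level verifications you flag as ``routine,'' which is exactly the content relegated to the thesis.
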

\begin{proof}
The reader is referred to \cite[Proposition~6.8]{Powellthesis} for the proof of this proposition, which is too long for the present paper, and is relatively straight--forward.  It is hopefully intuitively plausible, given that our algebraic connected sum so closely mirrors the geometric connected sum, that our addition is associative, commutative, and that algebraic connected sum with the symmetric Poincar\'{e} triad $(\{0\},\Y^U,\Id)$ associated to the unknot produces an equivalent triad.
\end{proof}

\section{Algebraic Concordance}\label{Chapter:algconcordance}

In this section we introduce an algebraic concordance relation on the elements of $\P$ which closely captures the notion of $(1.5)$-solvability, in the sense that the \COT obstructions vanish if a knot is algebraically $(1.5)$-solvable (Definition \ref{defn:2ndorderconcordant}) which in turn holds if a knot is geometrically $(1.5)$-solvable.


We proceed as follows.  Given two triples $(H,\Y,\xi),(H^\dag,\Y^{\dag},\xi^\dag) \in \P$, we formulate an algebraic concordance equivalence relation, modelled on the concordance of knots and corresponding to $\Z$-homology cobordism of manifolds, with the extra control on the fundamental group which is evidently required, given the prominence of the Blanchfield form in \cite{COT} when controlling representations.  We take the quotient of our monoid $\P$ by this relation, and obtain a group $\mathcal{AC}_2 := \P/\sim$.  Our goal for this section is to complete the set up of the following commuting diagram, which has geometry in the left column and algebra in the right column:\vspace{-0.2cm}
\[\xymatrix @C+1cm{
Knots \ar[r] \ar @{->>} [d] & \P \ar @{->>} [d]  \\
\C \ar[r] & \mathcal{AC}_2, 
}\]
where $Knots$ is the monoid of geometric knots under connected sum and $\C$ is the concordance group of knots.
We shall first define our concordance relation, and show that it is an equivalence relation.  We will then define an inverse $-(H,\Y,\xi)$ of a triple $(H,\Y,\xi)$, and show that $(H,\Y,\xi) \,\sharp\, -(H,\Y,\xi) \sim (\{0\},\Y^U,\Id_{\{0\}})$, where $(\{0\},\Y^U,\Id_{\{0\}})$ is the triple of the unknot, so that we obtain a group $\mathcal{AC}_2$.

\begin{proposition}\label{lemma:basicfactconcordance}
Two knots $K$ and $K^{\dag}$ are topologically concordant if and only if the 3-manifold \vspace{-0.2cm}
\[Z := X \cup_{\partial X = S^1 \times S^1} S^1 \times S^1 \times I \cup_{S^1 \times S^1 = \partial X^{\dag}} -X^{\dag}\]
is the boundary of a topological 4-manifold $W$ such that
\begin{description}
\item[(i)] the inclusion $i \colon Z \hookrightarrow W$ restricts to $\Z$-homology equivalences \vspace{-0.1cm}$$H_*(X;\Z) \xrightarrow{\simeq} H_*(W;\Z) \xleftarrow{\simeq} H_*(X^\dag;\Z); \text{ and}$$
\item[(ii)] the fundamental group $\pi_1(W)$ is normally generated by a meridian of (either of) the knots.
\end{description}
\end{proposition}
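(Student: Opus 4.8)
The plan is to prove both directions by standard cut-and-paste arguments relating a concordance annulus to a $4$--manifold bounding $Z$, using the fact that the exterior of a slice disk (or more generally a concordance) has the $\Z$--homology of a circle.

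For the forward direction, suppose $C \cong S^1 \times I$ is a locally flat concordance annulus in $S^3 \times I$ from $K$ to $K^\dag$. First I would set $W := (S^3 \times I) \setminus \nu C$, the exterior of the concordance. Its boundary is naturally identified with $Z$: the two ends $S^3 \times \{0,1\}$ contribute $X$ and $-X^\dag$ (with a meridian--longitude framing), and the tube $\partial \nu C \cap (S^3 \times I)$ contributes the product piece $S^1 \times S^1 \times I$; one checks the gluing maps agree with those in the definition of $Z$. For condition (i), I would run a Mayer--Vietoris argument, or appeal to Alexander duality in $S^3 \times I$, to show $H_*(W;\Z) \cong H_*(S^1;\Z)$ (the concordance exterior has the $\Z$--homology of a circle since it is obtained from $S^3 \times I$ by removing an annulus); the inclusions of $X$ and $X^\dag$ send the meridian class to a generator, hence induce $\Z$--homology isomorphisms. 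For condition (ii), $\pi_1(W)$ is generated by meridians of $C$, all of which are conjugate to a single meridian of $K$ (equivalently of $K^\dag$) since $C$ is connected; this is the usual normal generation statement. The same remark as in the proof of Proposition \ref{prop: fundtriaddefinesanelement}, using that $S^3 \setminus \nu K$ has first homology generated by a meridian, makes the homology computation routine.

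For the converse, suppose $W^4$ is a topological $4$--manifold with $\partial W = Z$ satisfying (i) and (ii). The idea is to recover a concordance by surgering $W$ to turn it into a genuine concordance exterior, then fill back in the removed annulus. First I would glue $S^1 \times D^2 \times I$ onto the $S^1 \times S^1 \times I$ piece of $Z = \partial W$, filling in the tube, to obtain a $4$--manifold $\widehat W$ with boundary $\widehat Z := M_K \cup (\text{product}) \cup -M_{K^\dag}$, where $M_K$ denotes $0$--framed surgery on $K$; then one observes $\widehat W$ is a $\Z$--homology $S^1 \times D^2 \times I$--ish object. Next, using (ii), I would do surgery on $W$ (or on $\widehat W$) along a collection of circles in the interior to kill $\pi_1$ down to $\Z$, generated by the meridian; condition (i) guarantees the surgeries can be done in the interior without changing the boundary or the $\Z$--homology type, since the relevant classes lie in $\pi_1(W)^{(1)}$ which maps trivially to $H_1$. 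After this one has a $4$--manifold with the $\Z$--homology and fundamental group of $S^1 \times D^2$ bounding $\widehat Z$; a further argument (capping off, or recognising it as the exterior of a locally flat embedded annulus via the standard "surgery reverses the exterior construction" move, cf.\ the $S^3 = X \cup S^1 \times D^2$ decomposition used above) produces the embedded annulus $C \subset S^3 \times I$ realising the concordance between $K$ and $K^\dag$.

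The main obstacle is the converse direction, specifically the step of recognising the surgered $4$--manifold as honestly the exterior of a locally flat embedded annulus in $S^3 \times I$: one must arrange that after filling in the $S^1 \times D^2 \times I$ and performing interior surgeries, the resulting object is homeomorphic rel boundary to $S^1 \times D^2 \times I$ in a way compatible with the product structure on the boundary tube, so that the core annulus $S^1 \times \{0\} \times I$ sits inside $S^3 \times I$ as the desired concordance. This is where one genuinely uses the $4$--dimensional topological category (and is the classical observation underlying, e.g., \cite[Section~1]{COT}); I would cite the relevant facts about topological surgery and the structure of $\Z$--homology circle exteriors rather than reprove them, since conditions (i) and (ii) are precisely engineered to make this identification go through.
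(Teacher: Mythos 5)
The paper does not prove this proposition; it refers to \cite[Proposition~8.1]{Powellthesis}, so I am judging your argument on its own terms. Your forward direction is the standard one and is fine: $W=(S^3\times I)\setminus\nu C$ has boundary $Z$, Alexander duality or Mayer--Vietoris gives $H_*(W;\Z)\cong H_*(S^1;\Z)$ with meridians generating, and van Kampen applied to $S^3\times I=W\cup\nu C$ shows $\pi_1(W)$ is normally generated by a meridian.

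The converse as you describe it has a genuine gap, in the step where you ``do surgery on $W$ along circles in the interior to kill $\pi_1$ down to $\Z$'' and claim that condition (i) guarantees this does not change the $\Z$-homology type because the surgered circles lie in $\pi_1(W)^{(1)}$. That is false: surgery on a null-homologous circle in a $4$-manifold replaces $S^1\times D^3$ by $D^2\times S^2$ and \emph{creates} new classes in $H_2$ (generically a connected sum with $S^2\times S^2$ or $S^2\widetilde{\times}S^2$). Killing that new second homology is precisely the hard part of the slice/concordance problem, so this step cannot be waved through; if it could, conditions (i) and (ii) would trivially produce concordances even smoothly, which is false. Moreover the step is unnecessary, and your description of the boundary after filling (``$M_K\cup(\text{product})\cup -M_{K^\dag}$'') is off: gluing $S^1\times D^2\times I$ to $W$ along the entire $S^1\times S^1\times I$ piece, with the meridian bounding the $D^2$ factor, leaves boundary $S^3\sqcup -S^3$ (not the zero-surgeries, and with no product piece remaining). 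The correct route is: form $W'=W\cup_{S^1\times S^1\times I}(S^1\times D^2\times I)$; van Kampen together with (ii) gives $\pi_1(W')=1$ because the meridian, which normally generates $\pi_1(W)$, now bounds a disc; Mayer--Vietoris together with (i) gives $H_*(W')\cong H_*(S^3\times I)$; Freedman's topological h-cobordism theorem then identifies $W'$ with $S^3\times I$ rel boundary, and the core annulus $S^1\times\{0\}\times I$ is the desired locally flat concordance. No interior surgeries are needed, and $\pi_1(W)$ itself is never made equal to $\Z$.
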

We omit the proof of this proposition, which is well-known to the experts, and refer the interested reader to \cite[Proposition~8.1]{Powellthesis}

We need to construct the algebraic version of $Z$ from two symmetric Poincar\'{e} triads $\Y$ and $\Y^{\dag}$ so that we can impose conditions on the algebraic 4-dimensional complexes which have it as their boundary.  As part of the definition of a symmetric Poincar\'{e} triad $\Y$ over $\zh$ (Definition \ref{Defn:symmPoincaretriad}),\vspace{-0.2cm}
\[\xymatrix @C+0.5cm {\ar @{} [dr] |{\stackrel{g}{\sim}}
(C,\varphi_C) \ar[r]^{i_-} \ar[d]_{i_+} & (D_-,\delta\varphi_-) \ar[d]^{f_-}\\ (D_+,\delta\varphi_+) \ar[r]^{f_+} & (Y,\Phi),
}\]
we can construct a symmetric Poincar\'{e} pair
\[(\eta \colon E := D_- \cup_{C} D_+ \to Y ,(\Phi, \delta\varphi_- \cup_{\varphi_C} \delta\varphi_+))\]
where \vspace{-0.2cm}
\[\eta = \left(\begin{array}{ccc} f_-\, , & (-1)^{r-1}g \, , & -f_+ \end{array} \right) \colon E_r = (D_-)_r \oplus C_{r-1} \oplus (D_+)_r \to Y_r.\]
In our case of interest, $E$, for the standard models of $C, D_{\pm}$, is given by:
\[\xymatrix{ E_2 \cong \bigoplus_2 \, \zh \xrightarrow{\partial_2} E_1 \cong \bigoplus_4 \, \zh  \xrightarrow{\partial_1} E_0 \cong \bigoplus_2 \, \zh,
}\]
where:\vspace{-0.4cm}
\[\partial_1 = \left(\begin{array}{cc} g_1-1 & 0  \\ 1 & l_a \\ l_a^{-1} & 1 \\ 0 & g_q-1 \end{array} \right) ; \text{ and } \partial_2 = \left(\begin{array}{cccc} -1 & g_1-1 & 0 & -l_a \\ -l_a^{-1} & 0 & g_q-1 & -1 \end{array} \right),\]
with $\phi_0 \colon E^{2-r} \to E_r$:\vspace{-0.2cm}
\[\xymatrix @C+1cm{ E^0 \ar[r]^{\delta_1} \ar[d]^{\phi_0} & E^1 \ar[r]^{\delta_2} \ar[d]^{\phi_0} & E^2 \ar[d]^{\phi_0}\\
E_2 \ar[r]^{\partial_2} & E_1 \ar[r]^{\partial_1} & E_0
}\]
given by:
\[\xymatrix @R+1.5cm @C+1.5cm{ \bigoplus_2\,\zh \ar[r]^{\delta_1} \ar[d]^{\left(\begin{array}{cc} -1 & l_a \\ 0 & 0 \end{array} \right)} & \bigoplus_4\,\zh \ar[r]^{\delta_2} \ar[d]^{\left(\begin{array}{cccc} 0 & g_1 & -l_ag_q & 0 \\ 0 & 0 & 0 & l_a \\ 0 & 0 & 0 & -1 \\ 0 & 0 & 0 & 0 \end{array} \right)} & \bigoplus_2\,\zh \ar[d]^{\left(\begin{array}{cc} 0 & g_1 l_a \\ 0 & -g_q \end{array} \right)}\\
\bigoplus_2\,\zh \ar[r]^{\partial_2} & \bigoplus_4\,\zh \ar[r]^{\partial_1} & \bigoplus_2\,\zh.
}\]
We have replaced $l_b^{-1}$ with $l_a$ here.  Note that the boundary and symmetric structure maps still depend on the group element $l_a$.  The next lemma shows that, over the group ring $\Z[\Z \ltimes (H \oplus H^{\dag})] = \zhddag$, the chain complexes $E,E^{\dag}$ of the boundaries of two different triads $\Y,\Y^{\dag}$ are isomorphic.  It is used to construct the top row of the triad in Definition \ref{defn:2ndorderconcordant}.

\begin{lemma}\label{lemma:torusindependantofl_a}
There is a chain isomorphism of symmetric Poincar\'{e} complexes:
\[\varpi_E \colon \zhddag \otimes_{\zh} E \to \zhddag \otimes_{\zhdag} E^{\dag}, \]
\[\xymatrix @C+2cm{
E_2 \ar[r]^{\partial_2} \ar[d]_{\varpi_E} & E_1 \ar[r]^{\partial_1} \ar[d]_{\varpi_E} & E_0 \ar[d]_{\varpi_E} \\
E^{\dag}_2 \ar[r]^{\partial_2^{\dag}} & E^{\dag}_1 \ar[r]^{\partial_1^{\dag}} & E^{\dag}_0 \\
}\]
omitting $\zhddag \otimes_{\zh}$ and $\zhddag \otimes_{\zhdag}$ from the notation of the diagram, given by: \vspace{-0.2cm}
\[\xymatrix @R+1.5cm @C+2cm{
\bigoplus_2\,\Ups^{\ddag} \ar[rr]^{\left(\begin{array}{cccc} -1 & g_1-1 & 0 & -l_a \\ -l_a^{-1} & 0 & g_q-1 & -1 \end{array} \right)} \ar[d]^{\left(\begin{array}{cc} 1 & 0 \\ 0 & l_a^{-1}l^{\dag}_a \end{array} \right)} && \bigoplus_4\,\Ups^{\ddag} \ar[r]^{\left(\begin{array}{cc} g_1-1 & 0  \\ 1 & l_a \\ l_a^{-1} & 1 \\ 0 & g_q-1 \end{array} \right)} \ar[d]_{\left(\begin{array}{cccc} 1 & 0  & 0 & 0\\ 0 & 1 & 0 & 0 \\ 0 & 0 & l_a^{-1}l^{\dag}_a  & 0 \\ 0 & 0 & 0 & l_a^{-1}l_a^{\dag}\end{array} \right)} & \bigoplus_2\,\Ups^{\ddag} \ar[d]_{\left(\begin{array}{cc} 1 & 0 \\ 0 & l_a^{-1}l^{\dag}_a \end{array} \right)} \\
\bigoplus_2\,\Ups^{\ddag} \ar[rr]_{\left(\begin{array}{cccc} -1 & g^{\dag}_1-1 & 0 & -l^{\dag}_a \\ -(l^{\dag}_a)^{-1} & 0 & g^{\dag}_q-1 & -1 \end{array} \right)} && \bigoplus_4\,\Ups^{\ddag} \ar[r]_{\left(\begin{array}{cc} g^{\dag}_1-1 & 0  \\ 1 & l^{\dag}_a \\ (l^{\dag}_a)^{-1} & 1 \\ 0 & g^{\dag}_q-1 \end{array} \right)} & \bigoplus_2\,\Ups^{\ddag} \\
}\]
where $\Ups^{\ddag} := \zhddag$.
\end{lemma}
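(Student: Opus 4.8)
The lemma displays the candidate isomorphism $\varpi_E$ outright, so the content of the proof is the verification of three things: that the displayed maps assemble into a chain map, that they are invertible, and that they intertwine the symmetric structures on $E$ and $E^{\dag}$. I would begin by fixing the algebra. Pass to representatives of the two triples with $g_1 = (1,0)$ and $g_1^{\dag} = (1,0^{\dag})$, as arranged in Definition~\ref{Defn:connectsumalgebraic}, so that over $\zhddag$ one has $g_1 = g_1^{\dag}$. Combining this with the defining relations $g_q = l_a^{-1}g_1 l_a$ and $g_q^{\dag} = (l_a^{\dag})^{-1}g_1^{\dag}l_a^{\dag}$ yields the single identity $g_q^{\dag} = (l_a^{-1}l_a^{\dag})^{-1}\,g_q\,(l_a^{-1}l_a^{\dag})$, equivalently $g_q(l_a^{-1}l_a^{\dag}) = (l_a^{-1}l_a^{\dag})g_q^{\dag}$. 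This relation, together with $g_1 = g_1^{\dag}$ and cancellation in $\zhddag$, is what every matrix computation below reduces to.

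Next I would verify the two commuting squares $\partial_1^{\dag}\circ\varpi_E = \varpi_E\circ\partial_1$ and $\partial_2^{\dag}\circ\varpi_E = \varpi_E\circ\partial_2$ by multiplying out the matrices involved; each resulting entry is an instance of the relations above (trivial in the rows and columns touching only $D_-$, and the conjugation identity in those touching $D_+$). Invertibility is immediate: each vertical map of $\varpi_E$ is block diagonal with diagonal blocks $1$ and $l_a^{-1}l_a^{\dag}$, and $l_a^{-1}l_a^{\dag}$ is a unit of $\zhddag$, so each vertical map is an isomorphism of $\zhddag$-modules and $\varpi_E$ is a chain isomorphism, whose inverse is obtained by interchanging the roles of the two triads (replacing $l_a^{-1}l_a^{\dag}$ by $(l_a^{\dag})^{-1}l_a$). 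As a sanity check, and an alternative to the brute-force computation, one can observe that $\varpi_E$ is exactly the map induced on the union $E = D_- \cup_C D_+$ of Definition~\ref{Defn:symmPoincaretriad} by the identity of $D_- = D_-^{\dag}$, by $\nu^{-1}$ on $C$ (with $\nu$ the chain isomorphism of Definition~\ref{Defn:connectsumalgebraic}), and by $\varpi^{\dag}\circ\varpi^{-1}$ on $D_+$; compatibility of these with the gluing maps $i_{\pm}$, $i_{\pm}^{\dag}$ then gives the chain-map property at once.

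Finally I would check that $(\varpi_E)^{\%}(\varphi_E) = \varphi_{E^{\dag}}$, that is, $\varpi_E \circ (\varphi_E)_s \circ (\varpi_E)^{*} = (\varphi_{E^{\dag}})_s$ for each $s$, where $(\varpi_E)^{*}$ is the dual (conjugate-transpose) chain map, and where the involution $t \mapsto t^{-1}$ sends the block $l_a^{-1}l_a^{\dag}$ to $(l_a^{\dag})^{-1}l_a = (l_a^{-1}l_a^{\dag})^{-1}$. For $s = 0$ this is a direct computation against the explicit matrix for $\varphi_0 \colon E^{2-*} \to E_*$ displayed just before the lemma: the point is that the off-diagonal entries $g_1 l_a$ and $-l_a g_q$ are carried precisely to $g_1^{\dag}l_a^{\dag}$ and $-l_a^{\dag}g_q^{\dag}$ by the governing relation. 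For $s \geq 1$ one uses that $\delta\varphi_{\pm} = 0 = \delta\varphi_{\pm}^{\dag}$, so the union structure $\varphi_E = \delta\varphi_- \cup_{\varphi_C}\delta\varphi_+$ is determined by $\varphi_C$ alone, together with the identity $\nu(\varphi_{C^{\dag}})\nu^{*} = \varphi_C$ recorded in Definition~\ref{Defn:connectsumalgebraic}; tracing this through the union construction closes the case. I expect this last step to be the only genuinely delicate part, since one must keep the signs in $\varphi_0$ and the action of the involution on $l_a^{-1}l_a^{\dag}$ consistent; the chain-map and invertibility checks are routine bookkeeping.
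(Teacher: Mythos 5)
Your proposal is correct and follows essentially the same route as the paper's (much terser) proof: the paper likewise reduces everything to the single identity $l_a g_q l_a^{-1} = g_1 = g_1^{\dag} = l_a^{\dag} g_q^{\dag} (l_a^{\dag})^{-1}$, notes that the block-diagonal maps are isomorphisms, and leaves the verification of $\varpi_E \phi \varpi_E^* = \phi^{\dag}$ to the reader. Your extra observation that $\varpi_E$ is the map induced on the union $D_-\cup_C D_+$ by $\Id$, $\nu^{-1}$ and $\varpi^{\dag}\circ\varpi^{-1}$ is a nice conceptual cross-check but does not change the argument.
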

\begin{proof}
To see that $\varpi_E$ is a chain map, as usual one needs the identities:
\[l_ag_ql_a^{-1} = g_1 = g_1^{\dag} = l_a^{\dag}g_q^{\dag}(l_a^{\dag})^{-1}.\]
The maps of $\varpi_E$ are isomorphisms, and the reader can calculate that $\varpi_E \phi \varpi_E^* = \phi^{\dag}.$  Note that this proof relies on the fact that $l_al_b=1$ and would require extra control over the longitude if we were not working modulo the second derived subgroup, but instead were only factoring out further up the derived series.
\end{proof}
\begin{definition}\label{defn:2ndorderconcordant}
We say that two triples $(H,\Y,\xi),(H^\dag,\Y^{\dag},\xi^\dag) \in \P$ are \emph{second order algebraically concordant} or \emph{algebraically $(1.5)$-equivalent}, written $\sim$, if there is a $\Z[\Z]$ module $H'$ of type $K$, that is $H'$ satisfies the properties of (a) of Theorem \ref{Thm:Levinemodule}, with a homomorphism \[(j_{\flat},j_{\flat}^{\dag}) \colon H \oplus H^{\dag} \to H'\] which induces homomorphisms
\[\zh \to \zhd
\text{ and } \zhdag \to \zhd,\]
along with a finitely generated projective $\zhd$-module chain complex $V$ with structure maps $\Theta$, the requisite chain maps $j,j^{\dag},\delta$, and chain homotopies $\g,\g^{\dag}$, such that there is a 4-dimensional symmetric Poincar\'{e}\footnote{The top row is a symmetric Poincar\'{e} pair by Lemma \ref{lemma:productcobordism})} triad:
\[\xymatrix @R+0.5cm @C+1cm {\ar @{} [dr] |{\stackrel{(\gamma,\g^{\dag})}{\sim}}
(\zhd \otimes (E,\phi)) \oplus (\zhd \otimes (E^{\dag},-\phi^{\dag})) \ar[r]^-{(\Id, \Id \otimes \varpi_{E^{\dag}})} \ar[d]_{\left( \begin{array}{cc} \Id \otimes \eta & 0 \\ 0 & \Id \otimes \eta^{\dag}\end{array}\right)} & \zhd \otimes (E,0) \ar[d]^{\delta}
\\ (\zhd \otimes (Y,\Phi)) \oplus (\zhd \otimes (Y^{\dag},-\Phi^{\dag})) \ar[r]^-{(j,j^{\dag})} & (V,\Theta),
}\]
which satisfies two homological conditions.  The first is that:
\[\ba{ccl}j \colon H_*(\Z \otimes_{\zhd} (\zhd \otimes_{\zh} Y)) &\xrightarrow{\simeq}& H_*(\Z \otimes_{\zhd} V) \text{ and} \\
j^{\dag} \colon H_*(\Z \otimes_{\zhd} (\zhd \otimes_{\zhdag} Y^{\dag})) &\xrightarrow{\simeq}& H_*(\Z \otimes_{\zhd} V)\ea\]
are isomorphisms, so that
$H_*(\Z \otimes_{\zhd} V) \cong H_*(S^1;\Z).$
The second homological condition is the consistency condition, that there is a consistency isomorphism:
\[\xi' \colon H' \xrightarrow{\simeq} H_1(\Z[\Z] \otimes_{\zhd} V),\]
such that the diagram below commutes:\vspace{-0.2cm}
\[\xymatrix @R+0.5cm @C+1cm{H \oplus H^{\dag} \ar[r]^{(j_{\flat},j_{\flat}^{\dag})} \ar[d]^{\left(\begin{array}{cc} \xi & 0 \\ 0 & \xi^{\dag} \end{array} \right)}_{\cong} & H' \ar[d]^{\xi'}_{\cong} \\
H_1(\Z[\Z] \otimes_{\zh} Y) \oplus H_1(\Z[\Z] \otimes_{\zhdag} Y^{\dag}) \ar[r]^-{\Id_{\Z[\Z]} \otimes (j_*,j^{\dag}_*)} & H_1(\Z[\Z] \otimes_{\zhd} V).
}\]
We say that two knots are second order algebraically concordant if their triples are, and we say that a knot is \emph{second order algebraically slice} or \emph{algebraically $(1.5)$-solvable} if it is second order algebraically concordant to the unknot.
\qed \end{definition}

\begin{remark}
In what follows we frequently omit the tensor products when reproducing versions of the diagram of the triad in Definition \ref{defn:2ndorderconcordant}, taking as understood that all chain complexes are tensored so as to be over $\zhd$ and all homomorphisms act with an identity on the $\zhd$ component of the tensor products.

\end{remark}

\begin{definition}
The quotient of $\P$ by the relation $\sim$ of Definition \ref{defn:2ndorderconcordant} is the \emph{second order algebraic concordance group} $\ac2$.  See Proposition \ref{prop:equivrelation} for the proof that $\sim$ is an equivalence relation and Proposition \ref{prop:inverseswork} for the proof that $\ac2$ is a group.
\qed \end{definition}

\begin{proposition}\label{prop:conc_implies_2ndorderconc}
Two concordant knots $K$ and $K^{\dag}$ are second order algebraically concordant.
\end{proposition}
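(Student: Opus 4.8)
The plan is to start from a topological concordance between $K$ and $K^\dag$ and feed it through Proposition \ref{lemma:basicfactconcordance}, which gives a topological $4$-manifold $W$ with $\partial W = Z = X \cup_{S^1 \times S^1 \times I} -X^\dag$ such that $i\colon Z \hookrightarrow W$ is a $\Z$-homology equivalence on each knot exterior and $\pi_1(W)$ is normally generated by a meridian. The target object is the symmetric Poincar\'e triad of Definition \ref{defn:2ndorderconcordant}, so I need to produce: the module $H'$, the chain complex $V$, the maps $j, j^\dag, \delta$, the homotopies $\g, \g^\dag$, the consistency isomorphism $\xi'$, and then verify the two homological conditions.

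First I would set $H' := \pi_1(W)^{(1)}/\pi_1(W)^{(2)}$, viewed as a $\Z[\Z]$-module via conjugation by a meridian (using that $\pi_1(W)/\pi_1(W)^{(1)} \cong \Z$, which follows from the $\Z$-homology equivalence and the normal generation hypothesis). The maps $j_\flat, j_\flat^\dag$ are induced by the inclusions $X, X^\dag \hookrightarrow W$ on the level of these metabelian quotients; that $H'$ is of type $K$ — finitely generated over $\Z[\Z]$ with $1-t$ acting invertibly — should follow from the half-lives-half-dies / homology arguments familiar from \cite{COT}, using condition (i) of Proposition \ref{lemma:basicfactconcordance}. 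Then $V := C(W; \zhd)$, the handle chain complex of $W$ with coefficients in the group ring $\Z[\pi_1(W)/\pi_1(W)^{(2)}] \cong \zhd$, equipped with the symmetric structure $\Theta$ coming from the image of a relative fundamental class $[W, \partial W]$ under a diagonal approximation chain map — exactly as the symmetric structure on $Y = C(X;\zh)$ was constructed in the proof of Proposition \ref{prop: fundtriaddefinesanelement}. Using a handle decomposition of $W$ that contains handle decompositions of $X$, $X^\dag$, and the collar $S^1 \times S^1 \times I$ as subcomplexes, the maps $j, j^\dag$ and $\delta$ (on the $E$-pieces) are literally the inclusion-induced chain maps, and the triad is Poincar\'e by Poincar\'e--Lefschetz duality for the $4$-manifold-with-boundary decomposed as a triad, transported to the chain level as in Definition \ref{Defn:symmPoincaretriad}. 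The chain homotopies $\g, \g^\dag$ witnessing commutativity up to homotopy come from the geometric fact that the triad decomposition of $\partial W$ is a genuine decomposition of spaces. The chain isomorphism $\varpi_{E^\dag}$ needed in the top row is exactly the one supplied by Lemma \ref{lemma:torusindependantofl_a}, which is why that lemma was stated.

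For the homological conditions: the isomorphism statements on $\Z \otimes_{\zhd} V$ reduce, after the base change $\Z \otimes_{\zhd}(-)$, to $H_*(W;\Z) \cong H_*(X;\Z) \cong H_*(S^1;\Z)$, which is precisely condition (i) of Proposition \ref{lemma:basicfactconcordance}; likewise for $X^\dag$. The consistency isomorphism $\xi' \colon H' \xrightarrow{\simeq} H_1(\Z[\Z] \otimes_{\zhd} V)$ is the Hurewicz-type isomorphism $\pi_1(W)^{(1)}/\pi_1(W)^{(2)} \cong H_1(W; \Z[\Z])$, and the square relating it to $\xi, \xi^\dag$ via $(j_\flat, j_\flat^\dag)$ and $(j_*, j_*^\dag)$ commutes by naturality of the Hurewicz map under the inclusions $X, X^\dag \hookrightarrow W$. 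The main obstacle I expect is not any single one of these points but the bookkeeping needed to check that all the structure maps of the triad fit together into a bona fide symmetric Poincar\'e triad in the precise sense of Definition \ref{Defn:symmPoincaretriad} — in particular getting the signs and orientations right so that the two copies of the torus-exterior chain complexes $(E, \phi)$ and $(E^\dag, -\phi^\dag)$ appear with the correct relative sign (mirroring the $-X^\dag$ in the definition of $Z$), and verifying that the identifications of $E$ with $E^\dag$ over $\zhd$ are compatible with $\varpi_{E^\dag}$. I would handle this by choosing the handle decomposition of $W$ carefully and invoking the geometric origin of all the maps, relegating the explicit sign verifications to the same style of computation already used in Definitions \ref{defn:basic_chain_complexes} and \ref{Defn:connectsumalgebraic}.
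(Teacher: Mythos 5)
Your construction is correct, but it is not the route the paper takes for this statement. The paper deliberately postpones the proof and obtains Proposition \ref{prop:conc_implies_2ndorderconc} as a corollary of Theorem \ref{Thm:1.5solvable=>2nd_alg_slice}: concordant knots $K$, $K^\dag$ have $K\,\sharp\,-K^\dag$ slice, hence $(1.5)$-solvable, hence algebraically $(1.5)$-solvable by that theorem, and then Proposition \ref{prop:inverseswork} converts ``$K\,\sharp\,-K^\dag$ is second order algebraically slice'' into ``$K$ and $K^\dag$ are second order algebraically concordant.'' Your argument is instead the direct one (it is essentially the proof of the special case given in the thesis): take the concordance exterior $W$ from Proposition \ref{lemma:basicfactconcordance}, set $H'=\pi_1(W)^{(1)}/\pi_1(W)^{(2)}$ and $V=C(W;\zhd)$ with the symmetric structure from the relative fundamental class, and read off the triad and the two homological conditions from condition (i) of that proposition. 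What your route buys is that no algebraic surgery is needed -- a genuine concordance exterior already satisfies $H_*(W;\Z)\cong H_*(S^1;\Z)$, so $V$ works as it stands -- whereas the paper's route gets the proposition for free once the harder theorem (where $H_2(W;\Z)\neq 0$ forces the surgery step) is in place. Two small points: the fact that $H'$ is of type $K$ is not a ``half-lives-half-dies'' argument but Levine's argument \cite[Propositions~1.1~and~1.2]{Levine2} applied to a finitely generated projective complex that is a $\Z$-homology circle (together with the fact that a compact topological $4$-manifold has the homotopy type of a finite complex); and you should make explicit that the top row $(\zhd\otimes E)\oplus(\zhd\otimes E^\dag)\to(E,0)$ is a symmetric Poincar\'e pair via Lemma \ref{lemma:productcobordism}, as the footnote to Definition \ref{defn:2ndorderconcordant} requires. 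Neither is a gap in substance.
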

We postpone the proof of this result: Proposition \ref{prop:conc_implies_2ndorderconc} is a corollary of Theorem \ref{Thm:1.5solvable=>2nd_alg_slice}.  See \cite[Proposition~8.6]{Powellthesis} for a proof of this special case.

\begin{proposition}\label{prop:equivrelation}
The relation $\sim$ of Definition \ref{defn:2ndorderconcordant} is an equivalence relation.
\end{proposition}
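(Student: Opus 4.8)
**The plan is to verify the three standard axioms — reflexivity, symmetry, and transitivity — by constructing the appropriate 4-dimensional symmetric Poincaré triad $V$ in each case.**

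For \emph{reflexivity}, given $(H,\Y,\xi)$ I would take $H' = H$ with $(j_\flat, j_\flat^\dag) = (\Id, -\Id)$ (the sign reflecting the orientation reversal implicit in $(E^\dag, -\phi^\dag)$ in the top row), and I would let $V$ be the product cobordism on $Y$. Concretely, I would invoke Lemma \ref{lemma:productcobordism} applied to the identity equivalence $\Id \colon (Y,\Phi) \to (Y,\Phi)$ to obtain the symmetric Poincaré pair $((\Id, \Id)\colon Y \oplus Y \to Y, (0, \Phi \oplus -\Phi))$, and assemble the triad using the product cobordism on the $E$-level as well. The consistency isomorphism $\xi' = \xi$ then makes the required square commute by construction, and the homological conditions hold because $j$ and $j^\dag$ are (essentially) identity maps on homology.

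For \emph{symmetry}, suppose $(H,\Y,\xi) \sim (H^\dag, \Y^\dag, \xi^\dag)$ via $(H', V, \Theta, \dots)$. I would produce a witness for $(H^\dag, \Y^\dag, \xi^\dag) \sim (H, \Y, \xi)$ by "reversing" the triad: keep the same $H'$ and the same $V$, but relabel the two boundary pieces by swapping the roles of $Y$ and $Y^\dag$, using the chain isomorphism $\varpi_{E^\dag}$ from Lemma \ref{lemma:torusindependantofl_a} (and its inverse) to re-identify the top row. One must check that negating the symmetric structure $\Theta \mapsto -\Theta$ where appropriate keeps everything a symmetric Poincaré triad — this is the same sign bookkeeping that appears in Definition \ref{Defn:symmPoincaretriad} — and that the consistency square for the reversed data commutes, which follows from the original square by inverting the vertical arrows (exactly as the excerpt notes for the equivalence relation on $\P$).

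For \emph{transitivity}, suppose $(H,\Y,\xi) \sim (H^\dag,\Y^\dag,\xi^\dag)$ via $(H', V, \dots)$ and $(H^\dag,\Y^\dag,\xi^\dag) \sim (H^\ddag,\Y^\ddag,\xi^\ddag)$ via $(H'', V', \dots)$. The natural move is to glue $V$ and $V'$ along their common boundary piece $Y^\dag$ using the union construction of \cite[Part I, pages 117--9]{Ranicki3}, after first tensoring both up to a common group ring built from a module $\widetilde{H}$ that receives compatible maps from $H'$ and $H''$ (one takes the pushout, or a quotient of $H' \oplus_{H^\dag} H''$, and checks it is of type $K$ using that $1-t$ acts as an automorphism — the same argument as in Theorem \ref{Thm:Levinemodule}(a)). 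The glued complex $V'' := V \cup_{Y^\dag} V'$ carries a symmetric Poincaré structure by the union construction, and a Mayer--Vietoris argument shows $H_*(\Z \otimes V'') \cong H_*(S^1;\Z)$ from the corresponding facts for $V$ and $V'$, since $Y^\dag$ also has the $\Z$-homology of a circle and the gluing maps are $\Z$-homology equivalences. The consistency isomorphism $\xi''$ for $\widetilde H$ is then forced by the two given ones together with another Mayer--Vietoris computation, and the relevant square commutes by pasting the two given squares.

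\textbf{The main obstacle} I anticipate is transitivity — specifically, ensuring that after gluing, the coefficient module $\widetilde H$ is genuinely of type $K$ (finitely generated with $1-t$ invertible) and that the glued $V''$ still satisfies the $\Z$-homology condition; the Mayer--Vietoris sequence for $V'' = V \cup_{Y^\dag} V'$ must be checked to behave well with the $\Z$ coefficients. The sign and orientation conventions needed to certify that each constructed object is an honest symmetric Poincaré triad (rather than merely a symmetric triad) in the symmetry and transitivity cases will also require care, but these are bookkeeping exercises rather than conceptual difficulties, and they are of the same flavor as the checks already carried out in Lemma \ref{lemma:torusindependantofl_a} and Definition \ref{Defn:connectsumalgebraic}.
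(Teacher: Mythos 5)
Your overall strategy---a product cobordism for reflexivity, relabelling for symmetry, and gluing along the $Y^\dag$ piece over a pushout coefficient module for transitivity---is exactly the paper's, and the transitivity step in particular matches the paper's construction (the coefficient module is precisely $\coker((j_\flat^\dag,-\ol{j_\flat^\dag})\colon H^\dag \to H'\oplus \ol{H'})$, and the checks you flag, that this module is of type $K$ and that the glued complex remains a $\Z$-homology circle, are the right ones). Two points, however, need repair. First, in the reflexive case the map must be $(j_\flat,j_\flat^\dag)=(\Id,\Id)$, not $(\Id,-\Id)$: the sign $-\Phi^\dag$ on the symmetric structure of the second boundary piece has no effect on the underlying chain maps, so with $V=Y$ and $j=j^\dag=\Id$ the bottom row of the consistency square sends $(x,x^\dag)\mapsto x+x^\dag$, and commutativity forces the top row to do the same; with $(\Id,-\Id)$ the square fails to commute. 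Second, since $\P$ consists of \emph{equivalence classes} of triples, reflexivity of $\sim$ on $\P$ (and hence the existence of $\ac2=\P/\sim$) requires the stronger statement that any two triples which are equivalent in the sense of Definition \ref{Defn:algebraicsetofchaincomplexes} are $\sim$-related. The paper proves exactly this, taking $H':=H^\%$, $(j_\flat,j_\flat)=(\omega,\Id)$ and $(V,\Theta):=(Y^\%,0)$, with the Poincar\'{e} pair structure supplied by the chain equivalence of triads together with the argument of Lemma \ref{lemma:productcobordism}; your construction handles only the special case $\omega=\Id$.

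One further detail your transitivity sketch glosses over: after applying the union construction, the top-right corner of the glued triad is the mapping cone $\mathscr{C}(E^\dag\to E\oplus E^\dag)$ rather than $(E,0)$, so one must exhibit an explicit chain equivalence back to $(E,0)$ and compose the resulting homotopy into the triad before the glued object literally satisfies Definition \ref{defn:2ndorderconcordant}. This is bookkeeping of the kind you anticipate, but it is a genuine step in the verification, not merely a consequence of the union construction being defined.
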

\begin{proof}
We begin by showing that $\sim$ is well--defined and reflexive: that $(H,\Y,\xi) \sim (H^\%,\Y^\%,\xi\%),$ where $(H,\Y,\xi)$ and $(H^\%,\Y^\%,\xi\%)$ are equivalent in the sense of Definition \ref{Defn:algebraicsetofchaincomplexes}.  This is the algebraic equivalent of the geometric fact that isotopic knots are concordant.  Suppose that we have an isomorphism $\omega \colon H \to H^\%$, and a chain equivalence of triads
$j \colon \Z[\Z \ltimes H^\%] \otimes_{\zh} \Y \to \Y^\%,$ such that the relevant square commutes, as in Definition \ref{Defn:algebraicsetofchaincomplexes} (see below). To show reflexivity, we take $H':= H^\%$, and take $(j_{\flat},j_{\flat}) = (\omega,\Id) \colon H \oplus H^\% \to H^\%$ and $(V,\Theta):= (Y^\%,0)$.  We tensor all chain complexes with $\zhpc$, which do not already consist of $\zhpc$-modules.  We have, induced by $j$, an equivalence of symmetric Poincar\'{e} pairs:
 \begin{multline*} (j_E,j_Y;\, k) \colon (\Id \otimes \eta \colon \zhpc \otimes_{\zh} E \to \zhpc \otimes_{\zh} Y) \to (\eta^\% \colon E^\% \to Y^\%),\end{multline*}
where $k \colon \eta^\% j_E \sim j_Y \eta$ is a chain homotopy (see \cite[Part~I,~page~140]{Ranicki3}).
We therefore have the symmetric triad:
\[\xymatrix @R+0.5cm @C+1cm {\ar @{} [dr] |{\stackrel{(k,0)}{\sim}}
\zhpc \otimes_{\zh} (E,\phi) \oplus (E^\%,-\phi^\%) \ar[r]^-{(j_E,\Id)} \ar[d]_{\left( \begin{array}{cc} \Id \otimes \eta & 0 \\ 0 & \eta^\% \end{array} \right)} & (E^\%,0) \ar[d]^{\eta^\%} \\ (Y,\Phi) \oplus (Y^\%,-\Phi^\%) \ar[r]^-{(j_Y,\Id)} & (Y^\%,0).
}\]
The proof of Lemma \ref{lemma:productcobordism} shows that this is a symmetric Poincar\'{e} triad.  Applying the chain isomorphism $\varpi_{E^\%} \colon E^\% \toiso \zhpc \otimes_{\zh} E$ to the top right corner produces the triad:
\[\xymatrix @R+0.5cm @C+1cm {\ar @{} [dr] |{\stackrel{(k,0)}{\sim}}
\zhpc \otimes_{\zh} (E,\phi) \oplus (E^\%,-\phi^\%) \ar[r]^-{(\varpi_{E^\%}\circ j_E,\varpi_{E^\%})} \ar[d]_{\left( \begin{array}{cc} \Id \otimes \eta & 0 \\ 0 & \eta^\% \end{array} \right)} & (\zhpc \otimes_{\zh} E,0) \ar[d]^{\eta^\% \circ(\varpi_{E^\%})^{-1}} \\ (Y,\Phi) \oplus (Y^\%,-\Phi^\%) \ar[r]^-{(j_Y,\Id)} & (Y^\%,0),
}\]
as required.  The homological conditions are satisfied since the maps $j,j^{\dag}$ from Definition \ref{defn:2ndorderconcordant} are chain equivalences and the chain complex $V=Y^\%$.  The consistency condition is satisfied since the commutativity of the square \[\xymatrix @C+1cm{H \ar[r]_-{\xi}^-{\cong} \ar[d]_{\omega}^-{\cong} & H_1(\Z[\Z] \otimes_{\zh} Y) \ar[d]_-{j_*}^-{\cong} \\
H^\% \ar[r]_-{\xi^\%}^-{\cong} & H_1(\Z[\Z] \otimes_{\zhpc} Y^\%), }\]which shows that $(H,\Y,\xi)$ and $(H^\%,\Y^\%,\xi\%)$ are equivalent in the sense of Definition \ref{Defn:algebraicsetofchaincomplexes}, extends to show that the square
\[\xymatrix @R+0.5cm @C+1cm{H \oplus H^{\%} \ar[r]^{(\omega,\Id)} \ar[d]^{\left(\begin{array}{cc} \xi & 0 \\ 0 & \xi^{\%} \end{array} \right)} & H^\% \ar[d]^{\xi^\%} \\
H_1(\Z[\Z] \otimes_{\zh} Y) \oplus H_1(\Z[\Z] \otimes_{\zh} Y^{\%}) \ar[r]^-{(j_*,\Id_*)} & H_1(\Z[\Z] \otimes_{\zhd} Y^\%)
}\]
is also commutative.  Therefore Definition \ref{defn:2ndorderconcordant} is satisfied, so $\sim$ is indeed a reflexive relation.  It is easy to see that $\sim$ is symmetric; we leave the straight-forward check to the reader.

To show transitivity, suppose that $(H,\Y,\xi) \sim (H^\dag,\Y^{\dag},\xi^\dag)$ using $(j_\flat,j_\flat^\dag) \colon H \oplus H^\dag \to H'$, and also that $(H^\dag,\Y^{\dag},\xi^\dag) \sim (H^\ddag,\Y^{\ddag},\xi^\ddag)$, using $(\ol{j_{\flat}^\dag},\ol{j_{\flat}^{\ddag}}) \colon H^{\dag} \oplus H^{\ddag} \to \ol{H'},$ so that there is a diagram of $\Z[\Z \ltimes \ol{H'}]$-module chain complexes:
\[\xymatrix @R+0.5cm @C+1.5cm {\ar @{} [dr] |{\stackrel{(\ol{\gamma^{\dag}},\ol{\g^{\ddag}})}{\sim}}
(E^{\dag},\phi^{\dag}) \oplus (E^{\ddag},-\phi^{\ddag}) \ar[r]^-{(\Id, \wt{\varpi}_{E^{\ddag}})} \ar[d]_{\left( \begin{array}{cc} \eta^{\dag} & 0 \\ 0 & \eta^{\ddag}\end{array}\right)} & (E^{\dag},0) \ar[d]^{\ol{\delta}}
\\ (Y^{\dag},\Phi^{\dag}) \oplus (Y^{\ddag},-\Phi^{\ddag}) \ar[r]^-{(\ol{j^{\dag}},\ol{j^{\ddag}})} & (\ol{V},\ol{\Theta}).
}\]
In this proof the bar is a notational device and has nothing to do with involutions.  To show that $(H,\Y,\xi) \sim (H^\ddag,\Y^{\ddag},\xi^\ddag)$, first we must define a $\Z[\Z]$-module $\ol{\ol{H'}}$ so that we can tensor everything with $\Z[\Z \ltimes \ol{\ol{H'}}]$.  We will glue the symmetric Poincar\'{e} triads together to show transitivity; first we must glue together the $\Z[\Z]$-modules.  Define:
\[(j_{\flat},\ol{j_{\flat}^{\ddag}}) \colon H \oplus H^{\ddag} \to \ol{\ol{H'}} := \coker((j_{\flat}^{\dag},-\ol{j_{\flat}^{\dag}}) \colon H^{\dag} \to H' \oplus \ol{H}').\]
Now, use the inclusions followed by the quotient maps: $$H' \to H' \oplus \ol{H'} \to \ol{\ol{H'}} \text{ and } \ol{H'} \to H' \oplus \ol{H'} \to \ol{\ol{H'}}$$ to take the tensor product of both the 4-dimensional symmetric Poincar\'{e} triads which show that $(H,\Y,\xi) \sim (H^\dag,\Y^{\dag},\xi^\dag)$, and that $(H,\Y^{\dag},\xi^\dag) \sim (H^\ddag,\Y^{\ddag},\xi^\ddag)$, with $\Z[\Z \ltimes \ol{\ol{H'}}]$, so that both contain chain complexes of modules over the same ring  $\Z[\Z \ltimes \ol{\ol{H'}}]$.  Then algebraically gluing the triads together, as in \cite[pages~117--9]{Ranicki2}, we obtain the 4-dimensional symmetric Poincar\'{e} triad:
\[\xymatrix @R+1cm @C+2cm {\ar @{} [dr] |{\ol{\ol{\g}} = \left(\begin{array}{cc} \g & 0 \\  0 & 0 \\  0 & \ol{\g^{\ddag}} \\ \end{array} \right)}
(E,\phi) \oplus (E^{\ddag},-\phi^{\ddag}) \ar[r]^-{\left(\begin{array}{cc} \Id & 0 \\  0 & 0 \\  0 & \wt{\varpi}_{E^{\ddag}} \\ \end{array} \right)} \ar[d]_{\left( \begin{array}{cc} \eta & 0 \\ 0 & \eta^{\ddag}\end{array}\right)} & (\ol{\ol{E}},- 0 \cup_{\phi^{\dag}} 0) \ar[d]^{\ol{\ol{\delta}} = \left( \begin{array}{ccc} \delta & (-1)^{r-1}\g^{\dag} & 0 \\ 0 & \eta^{\dag} & 0 \\ 0 & (-1)^{r-1}\ol{\g^{\dag}} & \ol{\delta} \\ \end{array} \right)}
\\ (Y,\Phi) \oplus (Y^{\ddag},-\Phi^{\ddag}) \ar[r]_-{\left(\begin{array}{cc} j & 0 \\  0 & 0 \\  0 & \ol{j^{\ddag}} \\ \end{array} \right)} & (\ol{\ol{V}},\ol{\ol{\Theta}}).
}\]
where:\vspace{-0.2cm}
\[\ol{\ol{E}} := \mathscr{C}((\varpi_{E^{\dag}},\Id)^T \colon E^{\dag} \to E \oplus E^{\dag});\]
\[\ol{\ol{V}} := \mathscr{C}((j^{\dag},\ol{j^{\dag}})^T \colon Y^{\dag} \to V \oplus \ol{V}); \text{ and } \ol{\ol{\Theta}} := \Theta \cup_{\Phi^{\dag}} \ol{\Theta}.\]
We need to show that this is equivalent to a triad where the top right term is $(E,0)$.  First, to see that $E \simeq \ol{\ol{E}}$, the chain complex of $\ol{\ol{E}}$ is given by:
\[\xymatrix {E_2^{\dag} \ar[rrrr]^-{\left(
                               \begin{array}{ccc}
                                 \varpi_{E^{\dag}}, &
                                 \partial_{E^{\dag}},&
                                 \Id
                               \end{array}
                             \right)^T} & & & & E_2 \oplus E_1^{\dag} \oplus E_2^{\dag} \ar[r]^-{\partial^{\ol{\ol{E}}}_2}& E_1 \oplus E_0^{\dag} \oplus E_1^{\dag} \ar[r]^-{\partial^{\ol{\ol{E}}}_1} & E_0 \oplus E_0^{\dag},}\]
where:\vspace{-0.2cm}
\[\partial^{\ol{\ol{E}}}_2 = \left(
                               \begin{array}{ccc}
                                 \partial_{E} & -\varpi_{E^{\dag}} & 0 \\
                                 0 & \partial_{E^{\dag}} & 0 \\
                                 0 & -\Id & \partial_{E^{\dag}} \\
                               \end{array}
                             \right); \text{ and } \partial^{\ol{\ol{E}}}_1 = \left(
                               \begin{array}{ccc}
                                 \partial_E & \varpi_{E^{\dag}} & 0 \\
                                 0 & \Id & \partial_{E^{\dag}} \\
                               \end{array}
                             \right).\]
It is easy to see that the chain map:
\[\nu' := \left(
     \begin{array}{ccc}
       \Id\, , & 0\, , & -\varpi_{E^{\dag}} \\
     \end{array}
   \right) \colon E_r \oplus E_{r-1}^{\dag} \oplus E_{r}^{\dag} \to E_r,\]
is a chain equivalence, with chain homotopy inverse:
\[ \nu'^{-1} := \left( \begin{array}{ccc} \Id\, , & 0\, , & 0 \end{array}\right)^T \colon E_r \to E_r \oplus E_{r-1}^{\dag} \oplus E_r^{\dag}.\]

We therefore have the diagram:
\[\xymatrix @R+1cm @C+2cm {
 & & (E,0) \ar@/^5pc/[ddl]^{\ol{\ol{\delta}}\circ \nu'^{-1}}\\
(E,\phi) \oplus (E^{\ddag},-\phi^{\ddag}) \ar@/^5pc/[rru]^>>>>>>>>>>>>>>>{\left(\Id,- \varpi_{E^{\dag}} \circ \wt{\varpi}_{E^{\ddag}} \right)
} \ar[r]^-{\left(\begin{array}{cc} \Id & 0 \\  0 & 0 \\  0 & \wt{\varpi}_{E^{\ddag}} \\ \end{array} \right)} \ar @{} [dr] |{\stackrel{\ol{\ol{\g}}}{\sim}} \ar[d] & (\ol{\ol{E}},- 0 \cup_{\phi^{\dag}} 0) \ar[ru]^{\simeq}_{\nu'} \ar[d]^{\ol{\ol{\delta}}} \ar @{} [dr] |{\stackrel{k'}{\sim}} &
\\ (Y,\Phi) \oplus (Y^{\ddag},-\Phi^{\ddag}) \ar[r] & (\ol{\ol{V}},\ol{\ol{\Theta}}). &
}\]
The top triangle commutes, while the bottom triangle commutes up to a chain homotopy $k'$: $k'$ gets composed with $\ol{\ol{\gamma}}$ to make the new triad.  Furthermore, $\nu' (- 0 \cup_{\phi^{\dag}} 0) \nu'^* = 0,$ so that we indeed have an equivalent triad with the top right as $(E,0)$.

To complete the proof, we need to see that the consistency condition holds.  The following commutative diagram has exact columns, the right hand column being part of the Mayer-Vietoris sequence.  The horizontal maps are given by consistency isomorphisms.  Recall that $\ol{\ol{H'}} := \coker((j_{\flat}^{\dag},-\ol{j_{\flat}^{\dag}}) \colon H^{\dag} \to H' \oplus \ol{H}').$  All homology groups in this diagram are taken with $\Z[\Z]$-coefficients.
\[\xymatrix @R+0.1cm @C+0.6cm {H^\dag \ar[d] \ar[rrr]^-{\xi^\dag}_-{\cong} & & & H_1(Y^\dag) \ar[d] \\
 H' \oplus \ol{H'} \ar[dd] \ar[rrr]_-{\cong}^-{\left(\ba{cc}\xi' & 0 \\ 0 & \ol{\xi'} \ea\right)} & & & H_1(V) \oplus H_1(\ol{V}) \ar[dd]\\
 & H \oplus H^\ddag \ar[ul] \ar@{-->}[dl] \ar[r]_-{\cong}^-{\left(\ba{cc}\xi & 0 \\ 0 & \xi^\ddag \ea\right)} & H_1(Y) \oplus H_1(Y^\ddag) \ar[ur] \ar@{-->}[dr] & \\
\ol{\ol{H'}} \ar[d] \ar@{-->}[rrr]_-{\cong}^-{\ol{\ol{\xi'}}} & & & H_1(\ol{\ol{V}}) \ar[d]\\
   0 & & & 0}\]
The diagonal dotted arrows are induced by the diagram, so as to make it commute.  The horizontal dotted arrow $\ol{\ol{H'}} \to H_1(\Z[\Z] \otimes_{\Z[\Z \ltimes \ol{\ol{H'}}]}\ol{\ol{V}})$ is induced by a diagram chase: the quotient map $H' \oplus \ol{H'} \to \ol{\ol{H'}}$ is surjective.  We obtain a well--defined isomorphism
\[\ol{\ol{\xi'}} \colon \ol{\ol{H'}} \toiso H_1(\Z[\Z] \otimes_{\Z[\Z \ltimes \ol{\ol{H'}}]}\ol{\ol{V}}).\]
The commutativity of the diagram above implies the commutativity of the induced diagram:
\[\xymatrix @R+0.5cm @C+1cm{H \oplus H^{\ddag} \ar[r] \ar[d]^-{\left(\begin{array}{cc} \xi & 0 \\ 0 & \xi^{\ddag} \end{array} \right)} & \ol{\ol{H'}} \ar[d]^-{\ol{\ol{\xi'}}} \\
H_1(\Z[\Z] \otimes_{\zh} Y) \oplus H_1(\Z[\Z] \otimes_{\zh} Y^{\ddag}) \ar[r] & H_1(\Z[\Z] \otimes_{\zhd} \ol{\ol{V}}).
}\]
This completes the proof that $\sim$ is transitive and therefore completes the proof that $\sim$ is an equivalence relation.
\end{proof}

\begin{definition}\label{defn:inverses}
Given an element $(H,\Y,\xi) \in \P$, choose a representative with the boundary given by the model chain complexes.
\[\xymatrix @C+0.5cm {\ar @{} [dr] |{\stackrel{g}{\sim}}
(C,\varphi \oplus -\varphi) \ar[r]^{i_-} \ar[d]_{i_+} & (D_-,0) \ar[d]^{f_-}\\ (D_+,0) \ar[r]^{f_+} & (Y,\Phi).
}\]
The following is also a symmetric Poincar\'{e} triad:
\[\xymatrix @C+0.5cm {\ar @{} [dr] |{\stackrel{g}{\sim}}
(C,-\varphi \oplus \varphi) \ar[r]^{i_-} \ar[d]_{i_+} & (D_-,0) \ar[d]^{f_-}\\ (D_+,0) \ar[r]^{f_+} & (Y,-\Phi),
}\]
which define as the element $-\Y$.  This is the algebraic equivalent of changing the orientation of the ambient space and of the knot simultaneously. The chain equivalence:
\[\varsigma = \left(
    \begin{array}{cc}
      0 & l_a \\
      l_a^{-1} & 0 \\
    \end{array}
  \right)
 \colon C_i \to C_i\]
for $i=0,1$ sends $\varphi \oplus -\varphi$ to $-\varphi \oplus \varphi$ and satisfies $i_{\pm} \circ \varsigma = i_{\pm}$.  We can therefore define the inverse $-(H,\Y,\xi) \in \P$ to be the triple $(H,-\Y,\xi)$, where $-\Y$ is the symmetric Poincar\'{e} triad:
\[\xymatrix @C+0.5cm {\ar @{} [dr] |{\stackrel{g\circ \varsigma}{\sim}}
(C,\varphi \oplus -\varphi) \ar[r]^{i_-} \ar[d]_{i_+} & (D_-,0) \ar[d]^{f_-}\\ (D_+,0) \ar[r]^{f_+} & (Y,-\Phi).
}\]
Summarising, to form an inverse we replace $g$ with $g \circ \varsigma$, and change the sign on the symmetric structures everywhere but on $C$ in the top left of the triad.
\qed \end{definition}

\begin{figure}[h]
    \begin{center}
 {\psfrag{A}{$(Y,\Phi)$}
 \psfrag{B}{$(D_+,0)$}
 \psfrag{C}{$(D_-,0)$}
 \psfrag{D}{$(V,\Theta)$}
 \psfrag{E}{$(D_-,0)$}
 \psfrag{F}{$(D_+,0)$}
 \psfrag{G}{$(D^{\dag}_-,0)$}
 \psfrag{H}{$(Y^{\dag},-\Phi^{\dag})$}
 \psfrag{J}{$(D_+^{\dag},0)$}
 \psfrag{K}{$(C,\varphi \oplus - \varphi)$}
 \psfrag{L}{$(C^{\dag},\varphi^{\dag} \oplus -\varphi^{\dag})$}
\includegraphics[width=8cm]{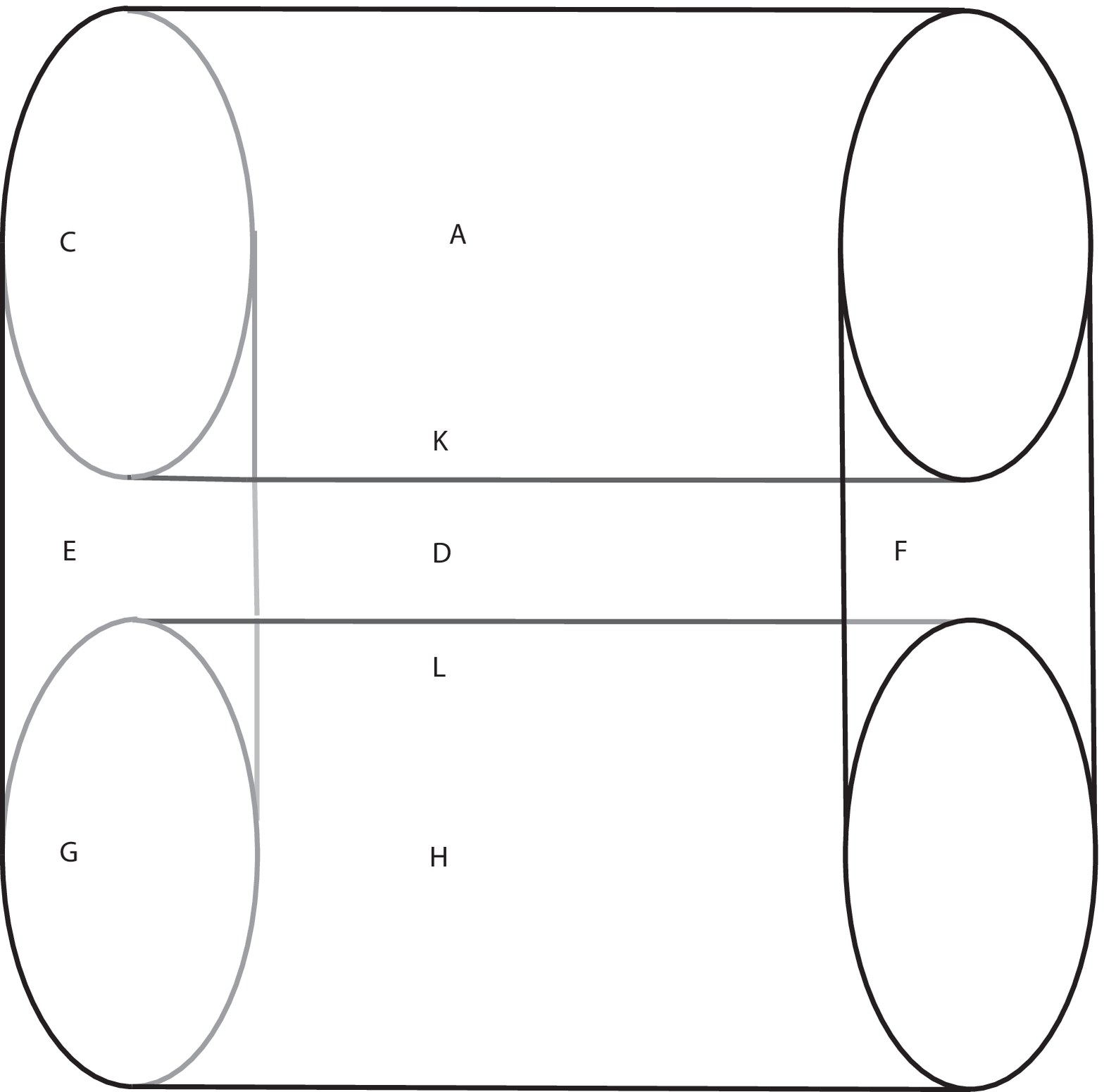}
 }
 \caption{The cobordism which shows that $\mathcal{Y} \sim \mathcal{Y}^{\dag}$.}
 \label{Fig:existenceofinverse2}
 \end{center}
\end{figure}

\begin{figure}[h]
    \begin{center}
 {\psfrag{A}{$(Y,\Phi)$}
 \psfrag{B}{$D_-^{\dag}$}
 \psfrag{C}{$D_-$}
 \psfrag{D}{$(V,\Theta)$}
 \psfrag{E}{$D_-$}
 \psfrag{F}{$D_-^{\dag}$}
 \psfrag{G}{$D_-$}
 \psfrag{H}{$D_- = Y^U$}
 \psfrag{J}{$D_-$}
 \psfrag{K}{$C$}
 \psfrag{L}{$C$}
 \psfrag{M}{$D_+$}
 \psfrag{N}{$D_+^{\dag}$}
 \psfrag{P}{$D_+^{\dag}$}
 \psfrag{Q}{$(Y^{\dag},-\Phi^{\dag})$}
 \psfrag{S}{$C^{\dag}$}
 \includegraphics[width=9cm]{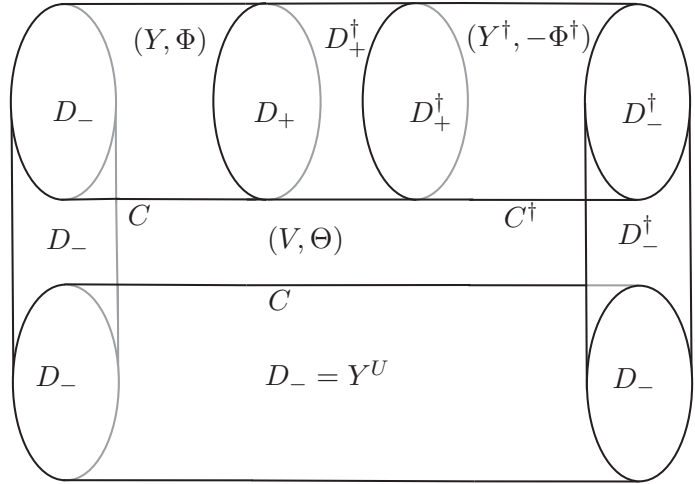}
 }
 \caption{The cobordism which shows that $\mathcal{Y} \, \sharp \, - \mathcal{Y}^{\dag} \sim \mathcal{Y}^U$.}
 \label{Fig:existenceofinverse3}
 \end{center}
\end{figure}

\begin{proposition}\label{prop:inverseswork}
Recall that $(\{0\},\Y^U,\Id_{\{0\}})$ is the triple of the unknot, and let $(H,\Y,\xi)$ and $(H^\dag,\Y^{\dag},\xi^\dag)$ be two triples in $\P$.  Then $$(H,\Y,\xi) \, \sharp \, -(H^\dag,\Y^{\dag},\xi^\dag) \sim (\{0\},\Y^U,\Id_{\{0\}})$$ if and only if $(H,\Y,\xi) \sim (H^\dag,\Y^{\dag},\xi^\dag).$
\end{proposition}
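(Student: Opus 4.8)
The plan is to prove both implications by explicit algebraic cobordism constructions, as indicated by Figures~\ref{Fig:existenceofinverse2} and~\ref{Fig:existenceofinverse3}; the two constructions are the algebraic shadows of the two halves of the classical geometric fact that $K$ and $K^\dag$ are concordant if and only if $K\,\sharp\,-K^\dag$ is slice. Throughout I would work with the model complexes of Definition~\ref{defn:basic_chain_complexes} for the boundary pieces, so that the connected sum $(H,\Y,\xi)\,\sharp\,-(H^\dag,\Y^\dag,\xi^\dag)$ and the unknot triple $(\{0\},\Y^U,\Id_{\{0\}})$ are written down concretely via Definitions~\ref{Defn:connectsumalgebraic} and~\ref{defn:inverses}, and I would reuse the chain-complex gluing (the union construction and algebraic Thom construction of \cite[Part~I]{Ranicki3}) and Lemma~\ref{lemma:productcobordism} for the torus-boundary pieces.

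For the ``if'' direction, suppose $(H,\Y,\xi)\sim(H^\dag,\Y^\dag,\xi^\dag)$, witnessed by a type $K$ module $H'$ and a $4$-dimensional symmetric Poincar\'e triad over $\zhd$ with chain complex $(V,\Theta)$ as in Definition~\ref{defn:2ndorderconcordant}. Geometrically, removing from the concordance exterior a product piece $B^3\times I$ meeting the concordance annulus in an unknotted $I\times I$ turns that concordance into a slice disk for $K\,\sharp\,-K^\dag$; algebraically I would realise this by forming the union of $(V,\Theta)$ with copies of the annular model complexes $D'_\pm$ and the product cobordism of Lemma~\ref{lemma:productcobordism}, keeping $H'$ as the new fourth-order module with structure map $(j_\flat,j_\flat^\dag)\colon H\oplus H^\dag\to H'$. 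Verifying that this produces a symmetric Poincar\'e triad whose bottom row is $(Y^\ddag,\Phi^\ddag)\oplus(Y^U,-\Phi^U)\to(V',\Theta')$ is bookkeeping with Ranicki's formulae; the two homological conditions (that $\Z$-coefficient homology is that of $S^1$, and that a consistency square commutes) then follow from Mayer--Vietoris together with the corresponding properties of the triad witnessing $\Y\sim\Y^\dag$.

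For the ``only if'' direction one may either carry out the dual construction of Figure~\ref{Fig:existenceofinverse2} --- gluing the given null-cobordism of $\Y\,\sharp\,-\Y^\dag$ to a product piece along a neighbourhood of the connect-sum sphere so as to undo the connected sum --- or deduce it formally. For the formal route I would first check, directly and without circularity, that $\sharp$ respects $\sim$: given triad cobordisms for $a\sim a'$ and $b\sim b'$, their union along a product (meridional) piece is a triad cobordism for $a\,\sharp\,b\sim a'\,\sharp\,b'$. Granting this and Proposition~\ref{Prop:abelianmonoid}, apply the ``if'' direction to the reflexive relation $(H^\dag,\Y^\dag,\xi^\dag)\sim(H^\dag,\Y^\dag,\xi^\dag)$ to get $\Y^\dag\,\sharp\,-\Y^\dag\sim\Y^U$, whence
\[(H^\dag,\Y^\dag,\xi^\dag)\sim\Y^\dag\,\sharp\,\Y^U\sim\Y^\dag\,\sharp\,(\Y\,\sharp\,-\Y^\dag)\sim\Y\,\sharp\,(\Y^\dag\,\sharp\,-\Y^\dag)\sim\Y\,\sharp\,\Y^U\sim(H,\Y,\xi).\]
In particular $-(H,\Y,\xi)$ is an inverse of $(H,\Y,\xi)$, so $\ac2=\P/\!\sim$ is a group.

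I expect the main obstacle to be the consistency condition in both constructions: one must exhibit the consistency isomorphism for the new $4$-dimensional complex and show the relevant square of $\Z[\Z]$-homology groups commutes. This requires carefully tracking the Alexander modules $H$, $H^\dag$, $H'$ and the isomorphisms $\xi$, $\xi^\dag$, $\xi'$ through the gluing, much as in the transitivity argument of Proposition~\ref{prop:equivrelation}, and using that multiplication by $1-t$ is an automorphism (as in Proposition~\ref{prop:2ndderivedsubgroup_adding}) to handle the inner-automorphism adjustments forced by taking $g_1=(1,0)$ on both summands. The verification of the symmetric Poincar\'e triad structure itself, while lengthy, is routine given Ranicki's machinery and Lemmas~\ref{lemma:productcobordism} and~\ref{lemma:torusindependantofl_a}.
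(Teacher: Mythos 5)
Your plan matches the paper's intended argument: the paper itself omits the proof of this proposition, deferring to the thesis and to Figures~\ref{Fig:existenceofinverse2} and~\ref{Fig:existenceofinverse3}, which depict exactly the two explicit algebraic cobordisms — built by gluing product pieces (Lemma~\ref{lemma:productcobordism}) and the annular model complexes onto the given $4$-dimensional triad via the union construction — that you describe. Your alternative formal route for the ``only if'' direction is also sound, with the caveat you already flag: it requires establishing independently that $\sharp$ descends to $\sim$-classes, a fact the paper likewise needs (implicitly, to make addition on $\ac2$ well defined) but does not prove here either, so that step is genuine additional work rather than something you may quote.
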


\begin{proof}
We omit the proof of this result, and instead refer the reader to \cite[Proposition~8.10]{Powellthesis}.  It is hopefully intuitively plausible, given that two knots $K, K^\dag$ are concordant if and only if $K \,\sharp\, -K^\dag$is slice.  See Figures \ref{Fig:existenceofinverse2} and \ref{Fig:existenceofinverse3}.
\end{proof}
Proposition \ref{prop:inverseswork} completes the proof that we have defined an abelian group.

\section{$(1.5)$-Solvable Knots are Algebraically $(1.5)$-Solvable}\label{Section:1.5solvable=>2nd_alg_slice}

This section contains the proof of the following theorem.

\begin{theorem}\label{Thm:1.5solvable=>2nd_alg_slice}
A $(1.5)$-solvable knot is algebraically $(1.5)$-solvable.
\end{theorem}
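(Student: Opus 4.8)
The plan is to construct, out of a geometric $(1.5)$-solution $W$ for $K$, all of the data called for in Definition~\ref{defn:2ndorderconcordant}, with the second triple there taken to be $(\{0\},\Y^U,\Id)$, the triple of the unknot. Recall (Definition~\ref{Defn:COTnsolvable}) that a $(1.5)$-solution is a $4$-manifold $W$ with $\partial W = M_K$ (the $0$-framed surgery on $K$), with $H_1(M_K;\Z)\toiso H_1(W;\Z)\cong\Z$, and with $H_2(W;\Z)$ carrying a basis of disjointly embedded surfaces $L_1,\dots,L_g,D_1,\dots,D_g$ with trivial normal bundles, realising a hyperbolic intersection form, such that $\pi_1(L_i)\subset\pi_1(W)^{(1)}$ and $\pi_1(D_i)\subset\pi_1(W)^{(2)}$. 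Splitting $\partial W = M_K = X\cup_{T^2}(T^2\times I)\cup_{T^2}(S^1\times D^2)$, with the longitude divided in two, displays $W$ (after collaring) as exactly the geometric cobordism underlying the algebraic triad of Definition~\ref{defn:2ndorderconcordant} relating the exterior $X$ of $K$ to the exterior of the unknot.

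The heart of the argument --- and the step where the strength of $(1.5)$- (as opposed to $(1)$-) solvability is used --- is to replace $W$ by a $\Z$-homology $S^1\times D^3$ without changing $\pi_1(W)/\pi_1(W)^{(2)}$. Since $\pi_1(D_i)\subset\pi_1(W)^{(2)}$, each dual surface $D_i$ lifts to the cover of $W$ corresponding to $\pi_1(W)^{(2)}$, and the standard surgery procedure of \COTN{} applies: first perform ambient surgeries on $W$ along curves lying on the $D_i$ --- these represent elements of $\pi_1(W)^{(2)}$, so they leave $\pi_1(W)/\pi_1(W)^{(2)}$ untouched --- so as to reduce each $D_i$ to a $2$-sphere, and then surger those $2$-spheres, killing the hyperbolic summand $\langle[L_i],[D_i]\rangle$ at each stage. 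The result is a $4$-manifold $N$ with $\partial N = M_K$, with $H_*(N;\Z)\cong H_*(S^1;\Z)$, and with $\pi_1(N)/\pi_1(N)^{(2)}\cong\pi_1(W)/\pi_1(W)^{(2)}$; I would refer to \cite[Section~8]{Powellthesis} and \cite{COT} for the details, or, staying on the algebraic side, perform the corresponding Ranicki-style algebraic surgery on the symmetric complex $C(W;\zhd)$ along the lifted classes $[D_i]$. Set $H':=\pi_1(N)^{(1)}/\pi_1(N)^{(2)}=H_1(N;\Z[\Z])$. This is finitely generated over $\Z[\Z]$, and the Wang sequence of the infinite cyclic cover of the $\Z$-homology circle $N$ forces $1-t$ to act invertibly on it, so $H'$ satisfies property~(a) of Theorem~\ref{Thm:Levinemodule}; by Proposition~\ref{prop:2ndderivedsubgroup}, $\pi_1(N)/\pi_1(N)^{(2)}\cong\Z\ltimes H'$. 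The inclusion $X\hookrightarrow N$ induces a $\Z[\Z]$-homomorphism $j_\flat\colon H=\px^{(1)}/\px^{(2)}\to H'$, and hence a ring map $\zh\to\zhd$.

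With this in hand I would assemble the triad. Choosing a handle decomposition of $N$ rel $\partial$ restricting to compatible handle decompositions of $X$, $T^2\times I$ and $S^1\times D^2$ on the boundary, set $V:=C(N;\zhd)$, with symmetric structure $\Theta$ the image of a relative fundamental class $[N,\partial N]\in C_4(N;\Z)$ under a chain-level diagonal approximation, exactly as in the construction preceding Proposition~\ref{prop: fundtriaddefinesanelement}. Restricting to the boundary and its splitting, and applying Lemma~\ref{lemma:torusindependantofl_a} to identify over $\zhd$ the boundary-torus complex $E$ coming from $X$ with the one coming from the unknot (via $\varpi_{E^U}$), yields precisely a $4$-dimensional symmetric Poincar\'e triad of the shape displayed in Definition~\ref{defn:2ndorderconcordant}: left-hand vertical map $(\eta,\eta^U)$, top map $(\Id,\Id\otimes\varpi_{E^U})$, bottom map $(j,j^U)$ induced by the inclusions $X\hookrightarrow N\hookleftarrow S^1\times D^2$, and the chain homotopy coming from the evident homotopy of inclusions. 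Poincar\'e--Lefschetz duality for $(N,\partial N)$, together with Lemma~\ref{lemma:productcobordism} for the top row, shows it is a Poincar\'e triad.

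It then remains to verify the two homological conditions. Since $X$, $S^1\times D^2$ and $N$ are each $\Z$-homology circles and the inclusions carry a meridian to a generator of $H_1(N;\Z)=\Z$, the maps $j$ and $j^U$ induce isomorphisms $H_*(X;\Z)\toiso H_*(N;\Z)\xleftarrow{\simeq}H_*(S^1\times D^2;\Z)$, which is the first condition. For the consistency condition, take $\xi'\colon H'=H_1(N;\Z[\Z])=H_1(\Z[\Z]\otimes_{\zhd}V)$ to be the identity (Hurewicz) isomorphism; the square of Definition~\ref{defn:2ndorderconcordant} then commutes because every arrow in it is induced by the single inclusion $X\hookrightarrow N$. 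This exhibits $(H,\Y,\xi)\sim(\{0\},\Y^U,\Id)$, i.e.\ $K$ is algebraically $(1.5)$-solvable. I expect the genuinely delicate point to be the surgery step of the second paragraph --- making the ambient surgeries on the $D_i$ simultaneously realisable while keeping them invisible to $\pi_1/\pi_1^{(2)}$, and confirming that $N$ is indeed a $\Z$-homology circle --- whereas everything after that is bookkeeping with symmetric and relative structures of the kind already carried out in Sections~\ref{Section:preliminaries} and~\ref{Section:monoid}.
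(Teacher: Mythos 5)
The decisive step of your argument --- ``perform ambient surgeries on $W$ along curves lying on the $D_i$ \dots so as to reduce each $D_i$ to a $2$-sphere, and then surger those $2$-spheres'' --- is exactly the step that cannot be carried out, and avoiding it is the entire point of the paper's proof. A curve on $D_i$ that is null-homotopic in $W$ (or lies in $\pi_1(W)^{(2)}$) need not bound an \emph{embedded} disc in the $4$-manifold $W$ disjoint from the other surfaces; this is the fundamental failure of the Whitney trick in dimension four, and there is no ``standard surgery procedure of Cochran--Orr--Teichner'' that compresses the Lagrangian surfaces to spheres. (Cochran, Orr and Teichner themselves never produce such a $\Z$-homology circle $N$; they work with Witt classes of intersection forms in $L$-theory precisely because the geometric surgery is obstructed.) The paper states this explicitly: the homology classes are typically represented by surfaces of non-zero genus with $\pi_1$ mapping into $\pi_1(W)^{(2)}$, and ``we cannot do surgery on such surfaces.'' If your second paragraph worked, much stronger conclusions than the theorem would follow. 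Everything downstream of it --- the manifold $N$, its handle decomposition, $\pi_1(N)$, the Hurewicz map defining $\xi'$ --- therefore has no foundation.

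The paper's actual route is the one you mention only parenthetically: it performs \emph{algebraic} surgery, in Ranicki's sense, on the algebraic Thom complex $\ol{V} \simeq C_*(W,M_K;\zhd)$ of the $(1.5)$-solution, using as surgery data the cohomology classes $l_1,\dots,l_k$ Poincar\'e--Lefschetz dual to generators $l'_1,\dots,l'_k$ of the $(2)$-Lagrangian (the vanishing of $\lambda_2$ on the Lagrangian is exactly the condition $\ol{f}\,\ol{\Theta}_0\ol{f}^*=0$ needed for the data to be a symmetric pair); the dual $(1)$-Lagrangian classes $d_j$ are then used to verify that the Poincar\'e thickening $V^{4-*}$ of the effect of surgery is a $\Z$-homology circle and that $H_1(-;\Z[\Z])$ --- and hence the consistency isomorphism for $H'=\pi_1(W)^{(1)}/\pi_1(W)^{(2)}$, which is \emph{not} replaced by the homology of a new manifold --- is unchanged. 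So if you want to salvage your write-up, you must delete the geometric surgery, keep $W$ as it is, and carry out the surgery and all subsequent verifications purely at the level of symmetric chain complexes; the bookkeeping you defer to the end is in fact where the content of the proof lies.
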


We begin by recalling the definition of $(n)$-solubility.  We denote the zero framed surgery on a knot $K$ by $M_K$.

\begin{definition}\label{Defn:COTnsolvable} \cite[Definition~1.2]{COT}
A \emph{Lagrangian} of a symmetric form $\lambda \colon P \times P \to R$ on a free $R$-module $P$ is a submodule $L \subseteq P$ of half-rank on which $\lambda$ vanishes.

For $n \in \mathbb{N}_0 := \mathbb{N} \cup \{0\}$, let $\lambda_n$ be the equivariant intersection pairing, and $\mu_n$ the self-intersection form, on the middle dimensional homology $H_2(W;\Z[\pi_1(W)/\pi_1(W)^{(n)}])$  of the covering space $W^{(n)}$ corresponding to the subgroup $\pi_1(W)^{(n)} \leq \pi_1(W)$:
\begin{multline*}\lambda_n \colon H_2(W;\Z[\pi_1(W)/\pi_1(W)^{(n)}]) \times H_2(W;\Z[\pi_1(W)/\pi_1(W)^{(n)}]) \to \Z[\pi_1(W)/\pi_1(W)^{(n)}].\end{multline*}
An $(n)$-\emph{Lagrangian} is a submodule of $H_2(W;\Z[\pi_1(W)/\pi_1(W)^{(n)}])$, on which $\lambda_n$ and $\mu_n$ vanish, which maps via the covering map onto a Lagrangian of $\lambda_0$.

We say that a knot $K$ is \emph{$(n)$-solvable} if the zero framed surgery $M_K$ bounds a topological spin 4-manifold $W$ such that the inclusion induces an isomorphism on first homology and such that $W$ admits two dual \emph{$(n)$-Lagrangians}.  In this setting, dual means that $\lambda_n$ pairs the two Lagrangians together non-singularly and their images freely generate $H_2(W;\Z)$.

We say that $K$ is \emph{$(n.5)$-solvable} if in addition one of the $(n)$-Lagrangians is the image of an $(n+1)$-Lagrangian.
\qed \end{definition}

An $(n)$-solution $W$ is an approximation to a slice disc complement; if $K$ is slice then it is $(n)$-solvable for all $n$, so if we can obstruct a knot from being $(n)$- or $(n.5)$-solvable then in particular we show that it is not slice.

It is an interesting question (Question \ref{question:is_it_an_iso}) to wonder whether the converse of Theorem \ref{Thm:1.5solvable=>2nd_alg_slice} holds.  At present, $\ac2$ does not capture the subtle quadratic refinement information, encoded in $\mu_2$, which is part of Definition \ref{Defn:COTnsolvable}.  Until the construction of $\ac2$ is improved so as to take the self intersection form into account it is unlikely that the converse to Theorem \ref{Thm:1.5solvable=>2nd_alg_slice} should hold.  Perhaps rationally there is more hope.

The idea of the proof of Theorem \ref{Thm:1.5solvable=>2nd_alg_slice} is as follows.  The Cappell-Shaneson technique \cite{CappellShaneson74} looks for obstructions to being able to perform surgery on a 4-manifold $W$ whose boundary is the zero framed surgery $M_K$, in order to excise the second $\Z$-homology and create a homotopy slice disc exterior.  The main obstruction to being able to do this surgery is the middle-dimensional intersection form of $W$, as in the \COT definition of $(n)$-solubility.  However, even if the Witt class of the intersection form vanishes, with coefficients in $\Z[\pi_1(W)/\pi_1(W)^{(2)}]$ for testing $(1.5)$-solubility, this does not imply that we have a half basis of the second homology  $H_2(W;\Z[\pi_1(W)/\pi_1(W)^{(2)}])$ representable by disjointly embedded spheres, as our data for surgery: typically the homology classes will be represented as embedded surfaces of non-zero genus, whose fundamental group maps into $\pi_1(W)^{(2)}$.  We cannot do surgery on such surfaces.

However, the conditions on a $(1.5)$-solution are, as we shall see, precisely the conditions required for being able to perform \emph{algebraic surgery on the chain complex} of the $(1.5)$-solution.  The $(1.5)$-level algebra cannot see the differences between $(2)$-surfaces and spheres, so that we can obtain an \emph{algebraic $(1.5)$-solution} $V$.

In particular, the existence of the dual $(1)$-Lagrangian allows us to perform algebraic surgery \emph{without changing the first homology} at the $\Z[\Z]$ level, therefore maintaining the consistency condition.  When performing geometric surgery on a 4-manifold $W$ along a 2-sphere, we remove $S^2 \times D^2$ and glue in $D^3 \times S^1$.  Removing the thickening $D^2$ potentially creates new elements of $H_1(W;\Z[\Z])$.  However, the existence of a dual surface to the $S^2$ which we remove guarantees that the boundary $S^1$ of the thickening $D^2$ bounds a surface on the other side, so that we do not create extra 1-homology.  This phenomenon will also be seen when performing algebraic surgery; as ever, the degree of verisimilitude provided by the chain level approach is somewhat remarkable.


\begin{definition}
An $n$-dimensional symmetric complex $(C,\varphi \in Q^n(C,\eps))$ is \emph{connected} if
$H_0(\varphi_0 \colon C^{n-*} \to C_*) = 0.$
An $n$-dimensional symmetric pair
$(f \colon C \to D,(\delta\varphi,\varphi) \in Q^n(f,\eps))$
is \emph{connected} if
$H_0( ( \delta\varphi_0 , \varphi_0 f^* )^T \colon D^{n-*} \to \mathscr{C}(f)_*) = 0.$
\qed \end{definition}

\begin{definition}\label{Defn:algebraicsurgerymatrices}
\cite[Part~I,~page~145]{Ranicki3} Given a connected $n$-dimensional symmetric chain complex over a ring $A$, $(C,\varphi \in Q^n(C,\eps))$, an \emph{algebraic surgery} on $(C,\varphi)$ takes as data a connected $(n+1)$-dimensional symmetric pair:
\[(f \colon C \to D,(\delta\varphi,\varphi) \in Q^{n+1}(f,\eps)).\]
The output, or effect, of the algebraic surgery is the connected $n$-dimensional symmetric chain complex over $A$, $(C',\varphi' \in Q^n(C',\eps))$, given by:
\begin{eqnarray*}
d_{C'} &= &\left(
             \begin{array}{ccc}
               d_C & 0 & (-1)^{n+1}\varphi_0f^* \\
               (-1)^rf & d_D & (-1)^r\delta\varphi_0 \\
               0 & 0 & (-1)^r\delta_D \\
             \end{array}
           \right)
\colon \\& &C'_r = C_r \oplus D_{r+1} \oplus D^{n-r+1} \to C'_{r-1} = C_{r-1} \oplus D_r \oplus D^{n-r+2},\end{eqnarray*}

with the symmetric structure given by:
\begin{eqnarray*}
\varphi'_0 &=& \left(
                 \begin{array}{ccc}
                   \varphi_0 & 0 & 0 \\
                   (-1)^{n-r}fT_{\eps}\varphi_1 & (-1)^{n-r}T_{\eps}\delta\varphi_1 & (-1)^{r(n-r)}\eps \\
                   0 & 1 & 0 \\
                 \end{array}
               \right)
               \colon\\& & C'^{n-r} = C^{n-r} \oplus D^{n-r+1} \oplus D_{r+1} \to C'_r = C_r \oplus D_{r+1} \oplus D^{n-r+1};\text{ and}\\
\varphi'_s &=&  \left(
                 \begin{array}{ccc}
                   \varphi_s & 0 & 0 \\
                   (-1)^{n-r}fT_{\eps}\varphi_{s+1} & (-1)^{n-r}T_{\eps}\delta\varphi_{s+1} & 0 \\
                   0 & 0 & 0 \\
                 \end{array}
               \right)
               \colon\\& & C'^{n-r+s} = C^{n-r+s} \oplus D^{n-r+s+1} \oplus D_{r-s+1} \to C'_r = C_r \oplus D_{r+1} \oplus D^{n-r+1}
\end{eqnarray*}
for $s \geq 1$.\qed \end{definition}

The reader can check that $d_{C'}^2 = 0$ and that $\{\varphi'_s\} \in Q^n(C',\eps)$.
Algebraic surgery on a chain complex which is symmetric but not Poincar\'{e} preserves the homotopy type of the boundary: see \cite[Part~I,~Proposition~4.1~(i)]{Ranicki3} for the proof.

\begin{definition}
The \emph{suspension morphism} $S$ on chain complexes raises the degree: $(SC)_r = C_{r-1};\; d_{SC} = d_C$.
\qed \end{definition}

\begin{proof}[Proof of Theorem \ref{Thm:1.5solvable=>2nd_alg_slice}]
We need to show that the triple $(H^K,\Y^K,\xi^K)$ of a $(1.5)$-solvable knot $K$, with a $(1.5)$-solution $W$, is equivalent to the identity element of $\mathcal{AC}_2$, which is represented by the triple $(\{0\},\Y^U,\Id_{\{0\}})$ corresponding to the unknot.

The chain complex $N_K := E^K \cup_{E^K \oplus E^U} Y^K \oplus Y^U$ is chain equivalent to the chain complex $C_*(M_K;\Z[\Z \ltimes H_1(M_K;\Z[\Z])])$ of the second derived cover of the zero framed surgery on $K$.  Our first attempt for a chain complex which fits into a 4-dimensional symmetric Poincar\'{e} triad as required in Definition \ref{defn:2ndorderconcordant} is the chain complex of the second derived cover of the $(1.5)$ solution $W$ \vspace{-0.2cm}
\[(V',\Theta') := (C_*(W;\Z[\Z \ltimes H_1(W;\Z[\Z])]),\backslash\Delta([W,M_K])),\] so that  \vspace{-0.2cm} \[H' := \pi_1(W)^{(1)}/\pi_1(W)^{(2)} \toiso H_1(W;\Z[\Z]),\] and we have the triad: \vspace{-0.3cm}
\[\xymatrix @R+0.5cm @C+1cm {\ar @{} [dr] |{\stackrel{(\gamma^K,\gamma^{U})}{\sim}}
(E^K,\phi^K) \oplus (E^U,-\phi^U) \ar[r]^-{(\Id, \Id \otimes \varpi_{E^{K}})} \ar[d]_{\left( \begin{array}{cc} \eta^K & 0 \\ 0 & \eta^{U}\end{array}\right)} & (E^K,0) \ar[d]^{\delta}
\\ (Y^K,\Phi^K) \oplus  (Y^{U},-\Phi^{U}) \ar[r]^-{(j^K,j^{U})} & (V',\Theta'),
}\]
with a geometrically defined consistency isomorphism $$H' \toiso H_1(W;\Z[\Z]) = H_1(\Z[\Z] \otimes_{\zhd} V).$$

The problem is that $H_2(W;\Z)$ is typically non-zero: if it were zero, we would have our topological concordance exterior and in particular $K$ would be second order algebraically slice.  We therefore need, as indicated above, to perform algebraic surgery on $V'$ to transform it into a $\Z$-homology circle.  We form the algebraic Thom complex (Definition \ref{Defn:algThomcomplexandthickening}):
\begin{multline*} C_*(W,M_K;\Z[\Z \ltimes H']) \simeq \ol{V} := \mathscr{C}((\delta,(-1)^{r-1}\gamma^K, (-1)^{r-1}\gamma^U, -j^K,-j^U) \colon \\ (N_K)_r = E^K_r \oplus E_{r-1}^K \oplus E_{r-1}^U \oplus Y^K_r \oplus Y^U_r \to V'_r),\end{multline*}
with symmetric structure $\ol{\Theta}:= \Theta' / (0 \cup_{\phi^K \oplus -\phi^U} \Phi^K \oplus -\Phi^U)$.  In this section the bar is again a notational device and has nothing to do with involutions.

This gives us the input for surgery, since the input for algebraic surgery must be a symmetric chain complex.  Next, we need the data for surgery.

As in the proof of \cite[Proposition~4.3]{COT}, any compact topological 4-manifold has the homotopy type of a finite simplicial complex: see \cite[Annex~B~III,~page~301]{KS}.  In particular this means that $H_2(W;\Z)$ is finitely generated.  We therefore have homology classes $l'_1,\dots,l'_k \in H_2(W;\Z[\Z \ltimes H'])$ which generate the $(2)$-Lagrangian whose existence is guaranteed by definition of a $(1.5)$-solution $W$.  There are therefore dual cohomology classes $l_1,\dots,l_k \in H^2(W,M_K;\Z[\Z \ltimes H'])$, by Poincar\'{e}-Lefschetz duality.  Taking cochain representatives for these, we have maps $l_i \colon \ol{V}_2 \to \Z[\Z \ltimes H']$.  We then take as our data for algebraic surgery the symmetric pair:
\[(\ol{f} \colon \ol{V} \to B:= S^2(\bigoplus_k \, \Z[\Z \ltimes H']),(0,\ol{\Theta})),\]
where \vspace{-0.2cm}
\[\ol{f} = (l_1,\dots,l_k)^T \colon \ol{V}_2 \to B_2 = \bigoplus_k \, \Z[\Z \ltimes H'].\]
The fact that the $l_i$ are cohomology classes means that $l_i d_{\ol{V}} = 0 $, so that $\ol{f}$ is a chain map.  The requirement that the $l_i'$ generate a submodule of $H_2(W;\Z[\Z \ltimes H']) = H_2(V')$ on which the intersection form vanishes means that the duals $l_i$ generate a submodule of $H^2(\ol{V})$ on which the cup product vanishes.  The cup product of any two $l_i,l_j$ is given by:
\[\Delta_0^*(l_i \otimes l_j)([W,M_K]) = (l_i\otimes l_j)(\Delta_0([W,M_K])) = (l_i \otimes l_j)\ol{\Theta}_0,\] which under the slant isomorphism is  $l_i \ol{\Theta}_0 l_j^*,$ and so we see that each of these composites vanishes.

The only possibility for non-zero symmetric structure in the data for surgery would arise when $s = n-2r-1 = 4-2\cdot 2-1 = -1$, so no such non-zero structure maps exist.  Therefore the condition for our data for surgery to be a symmetric pair is that $\ol{f} \,\ol{\Theta}_0 \ol{f}^* = 0;$
which is the condition that the $k \times k$ matrix with $(i,j)$th entry $l_i \ol{\Theta}_0 l_j^*$, is zero.  This is satisfied as we saw above, since $l_i \ol{\Theta}_0 l_j^* \colon \Z[\Z \ltimes H'] \to \Z[\Z \ltimes H']$ is a module homomorphism given by multiplication by the same group ring element as the evaluation on the relative fundamental class $[W,M_K]$ of the cup product of two cohomology classes dual to the $(2)$-Lagrangian, and so equals the value of $\lambda_2(l'_i,l'_j)$.  This means that we can proceed with the operation of algebraic surgery to form the symmetric chain complex $(V,\Theta)$, which is the effect of algebraic surgery, shown below.  We may assume, since $W$ is a 4-manifold with boundary, that we have a chain complex $V'$ whose non-zero terms are $V'_0, V'_1, V'_2$ and $V'_3$.  The non-zero terms in $\ol{V}$ will therefore be of degree less than or equal to four.

The output of algebraic surgery, which we denote as $(V,\Theta)$ is then given, from Definition \ref{Defn:algebraicsurgerymatrices}, by:
\[\xymatrix @R+2cm @C-0.4cm{ \ol{V}^0 \ar[rr]^-{\left( \begin{array}{c} d^*_{\ol{V}} \\ 0 \end{array}\right)} \ar[d]^{\left(\begin{array}{c} \ol{\Theta}_0 \end{array} \right)} &&
\ol{V}^1 \oplus B^2 \ar[rrrr]^-{\left(\begin{array}{cc} d^*_{\ol{V}} & \ol{f}^* \end{array}\right)} \ar[d]^<<<<<<<<<{\left(\begin{array}{cc} \ol{\Theta}_0 & 0 \\ 0 & 1 \end{array} \right)} &&&&
\ol{V}^2 \ar[rrr]^-{\left( \begin{array}{c} d^*_{\ol{V}} \\ -\ol{f}\,\ol{\Theta}_0^*\end{array} \right)} \ar[d]_>>>>>>>{\left(\begin{array}{c} \ol{\Theta}_0 \end{array} \right)} &&&
\ol{V}^3 \oplus B_2 \ar[rrrr]^-{\left( \begin{array}{cc} d^*_{\ol{V}} & 0 \end{array}\right)} \ar[d]^>>>>>>>>{\left(\begin{array}{cc} \ol{\Theta}_0 & 0 \\ -f T\ol{\Theta}_1 & -1 \end{array} \right)} &&&&
\ol{V}^4 \ar[d]_<<<<<<<<<<{\left(\begin{array}{c} \ol{\Theta}_0 \end{array} \right)} \\
\ol{V}_4 \ar[rr]_-{\left( \begin{array}{c} d_{\ol{V}} \\ 0 \end{array}\right)} &&
\ol{V}_3 \oplus B^2 \ar[rrrr]_-{\left(\begin{array}{cc} d_{\ol{V}} & -\ol{\Theta}_0\, \ol{f}^* \end{array}\right)} &&&&
\ol{V}_2 \ar[rrr]_-{\left( \begin{array}{c} d_{\ol{V}} \\ \ol{f}\end{array} \right)} &&&
\ol{V}_1 \oplus B_2 \ar[rrrr]_-{\left( \begin{array}{cc} d_{\ol{V}} & 0 \end{array}\right)} &&&&
\ol{V}_0.
}\]
The higher symmetric structures $\Theta_s$ are just given by the maps $\ol{\Theta}_s$ for $s=1,2,3,4$ except for the map:\vspace{-0.2cm}
\[\Theta_1 = \left(\begin{array}{cc} \ol{\Theta}_1\, , & -\ol{f} T \ol{\Theta}_2 \end{array} \right)^T \colon \ol{V}^4 \to \ol{V}_1 \oplus B_2.\]
Next, we take the algebraic Poincar\'{e} thickening (Definition \ref{Defn:algThomcomplexandthickening}) of $V$ to get:
\[i_V \colon \partial V \to V^{4-*},\]
where, as in Section \ref{Section:preliminaries}, we define the complex $V^{4-*}$ by:
\[(V^{4-*})_r = \Hom_{\zhd}(V_{4-r},\zhd),\]
with boundary maps
$\partial^* \colon (V^{4-*})_{r+1} \to (V^{4-*})_{r}$
given by
$\partial^* = (-1)^{r+1}d^*_V,$ where $d^*_V$ is the coboundary map.
By \cite[Part~I,~Proposition~4.1~(i)]{Ranicki3}, the operation of algebraic surgery does not change the homotopy type of the boundary.  There is therefore a chain equivalence:
\[(N_K, 0 \cup_{\phi^K \oplus -\phi^U} \Phi^K \oplus -\Phi^U) \xrightarrow{\sim} (\partial V, \partial \Theta), \]
so that using the composition of the relevant maps in:
\[N_K = E^K \cup_{E^K \oplus E^U} Y^K \oplus Y^U \xrightarrow{\sim} \partial V \to V^{4-*}\]
we again have a 4-dimensional symmetric Poincar\'{e} triad:
\[\xymatrix @C+1cm {\ar @{} [dr] |{\sim}
(E^K,\phi^K) \oplus (E^U,-\phi^U) \ar[r] \ar[d] & (E^K,0) \ar[d]
\\ (Y^K,\Phi^K) \oplus  (Y^{U},-\Phi^{U}) \ar[r] & (V^{4-*},0).
}\]
To complete the proof we need to check the homology conditions of Definition \ref{defn:2ndorderconcordant}, namely that $V^{4-*}$ has the $\Z$-homology of a circle and the consistency condition that there is an isomorphism $\xi' \colon H' \toiso H_1(\Z[\Z] \otimes_{\zhd} V^{4-*})$.  We have:
\[H_4(\Z \otimes_{\zhd} V^{4-*}) \cong H^0(\Z \otimes_{\zhd} \ol{V}) \cong H^0(W,M_K;\Z) \cong H_4(W;\Z) \cong 0,\text{ and}\]
\[H_0(\Z \otimes_{\zhd} V^{4-*}) \cong H^4(\Z \otimes_{\zhd} \ol{V}) \cong H^4(W,M_K;\Z) \cong H_0(W;\Z) \cong \Z,\]
as required.
For each basis element $(0,\dots,0,1,0,\dots,0) \in B^2$, where the $1$ is in the $i$th entry, we have, for $v \in \ol{V}_2$, \[\ba{rcl} \ol{f}^*(0,\dots,0,1,0,\dots,0)(v) & = & (0,\dots,0,1,0,\dots,0)\ol{f}(v) \\ & = & (0,\dots,0,1,0,\dots,0)(l_1,\dots,l_k)^T(v) = l_i(v). \ea\]
This means, since no $l_i$ lies in the image of $d_{\ol{V}}^* \colon \ol{V}^1 \to \ol{V}^2$, that the kernel $\ker((d_{\ol{V}}^*, \ol{f}^*)\colon \ol{V}^1 \oplus B^2 \to \ol{V}^2)$ is isomorphic to $\ker(d_{\ol{V}}^* \colon \ol{V}^1 \to \ol{V}^2)$, so that:\vspace{-0.2cm}
\[H_3(\Z \otimes_{\zhd} V^{4-*}) \cong H^1(\Z \otimes_{\zhd} \ol{V}) \cong H^1(W,M_K;\Z) \cong 0.\]
Also, since the $l_i$ are in the image of $\ol{f}^*$, they are no longer cohomology classes of $V^{4-*}$ as they were of $\ol{V}$.

At this point we need the dual classes; recall that we have, from Definition \ref{Defn:COTnsolvable}, classes $d'_1,\dots,d'_k \in H_2(W;\Z[\Z])$, whose images in $H_2(W;\Z)$ we also denote by $d'_1,\dots,d'_k$, which satisfy
$\lambda_1(l'_i,d'_j) = \delta_{ij}.$
We therefore have, by Poincar\'{e}--Lefschetz duality, classes: \vspace{-0.2cm}\[d_1,\dots,d_k \in H^2(W,M_K;\Z[\Z]),\]
with representative cochains which we also denote $d_1,\dots,d_k \in \ol{V}^2$.

Since, as above, the intersection form is defined in terms of the cup product, we have, over $\Z[\Z]$ and $\Z$, that:\vspace{-0.2cm}
\[l_i\ol{\Theta}_0^* d_j^* = \delta_{ij}.\]
We can use $\ol{\Theta}_0^* = T\ol{\Theta}_0$ instead of $\ol{\Theta}_0$ to calculate the cup products due to the existence of the higher symmetric structure chain homotopy $\ol{\Theta}_1$.  Then
\begin{eqnarray*} -\ol{f}\,\ol{\Theta}_0^*(d_j) &=& -\ol{f}\,\ol{\Theta}_0^*d_j^*(1) = -(l_1\ol{\Theta}_0^*d_j^*(1),\dots,l_k\ol{\Theta}_0^*d_j^*(1))^T \\ &=& -(0,\dots,0,1,0,\dots,0)^T = -e_j,\end{eqnarray*}
where the $1$ is in the $j$th position, and for $j = 1,\dots,k$ we denote the standard basis vectors by $e_j := (0,\dots,0,1,0,\dots,0)^T \in B_2.$  This means that the $d_j$ are not in the kernel of $-\ol{f}\ol{\Theta}_0^*$.  Then, since $d_{\ol{V}}^*(d_j) = 0$ as the $d_j$ are cocycles in $\ol{V}$, we know that the $d_j$ are no longer cohomology classes in $H_2(\Z \otimes_{\zhd} V^{4-*})$.  The group $H^2(\Z \otimes_{\zhd} \ol{V})$ was generated by the classes $l_1,\dots,l_k,d_1,\dots,d_k$, which means that we now have $H_2(\Z \otimes_{\zhd} V^{4-*}) \cong 0.$

Moreover, over both $\Z[\Z]$ and $\Z$, taking the element $D := \sum_{i=1}^k \, a_j d_j$, for any elements $a_1,\dots,a_k \in \Z[\Z]$, we have that:
\[-\ol{f}\,\ol{\Theta}_0^*(-D) = \sum_{j=1}^k \, a_j (\ol{f}\,\ol{\Theta}_0^*d_j^*(1)) = \sum_{j=1}^k \, a_j e_j \in B_2.\]
This means that $-\ol{f}\,\ol{\Theta}_0^*$ is onto $B_2$.  Therefore:
\[H_1(\Z \otimes_{\zhd} V^{4-*}) \cong H^3(\Z \otimes_{\zhd} \ol{V}) \cong H^3(W,M_K;\Z) \cong H_1(W;\Z) \cong \Z,\]
so the first homology remains unchanged at the $\Z$ level as required.  Similarly, with $\Z[\Z]$ coefficients, we have the isomorphisms:
\begin{eqnarray*} H' & \toiso & H_1(W;\Z[\Z]) \toiso H^3(W,M_K;\Z[\Z]) \\ & \toiso & H^3(\Z[\Z] \otimes_{\zhd} \ol{V}) \toiso H_1(\Z[\Z] \otimes_{\zhd} V^{4-*}), \end{eqnarray*}
which define the map \vspace{-0.2cm}
\[\xi' \colon H' \toiso H_1(\Z[\Z] \otimes_{\zhd} V^{4-*}),\]
so that the consistency condition is satisfied.  Since $H'$ is isomorphic to the $\Z[\Z]$-homology of a finitely generated projective module chain complex which is a $\Z$-homology circle, we can apply Levine's arguments \cite[Propositions~1.1~and~1.2]{Levine2}, to see that $H'$ is of type $K$.  This completes the proof that $(1.5)$-solvable knots are second order algebraically slice, or algebraically $(1.5)$-solvable.
\end{proof}

Theorem \ref{Thm:1.5solvable=>2nd_alg_slice} shows that the homomorphism from $\C$ to $\mathcal{AC}_2$ factors through $\mathcal{F}_{(1.5)}$ as claimed.

\section{Extracting first order obstructions}\label{Chapter:extracting_1st_order_obstructions}

In this section we obtain a surjective homomorphism from $\ac2$ to Levine's algebraic concordance group $\mathcal{AC}_1$.  In itself this is an important property which a respectable notion of a second order concordance group ought to have; moreover, this is the first step in defining the \COT obstructions algebraically.

We give the definition of $\mathcal{AC}_1$ in terms of Blanchfield forms.  For proofs of its equivalence to the standard definition in terms of Seifert forms, see \cite{Kearton} and \cite{Ranicki4}.

\begin{definition}\label{Defn:Blanchfieldform}
The \emph{Blanchfield form} \cite{Blanchfield}  of a knot $K$ is the non-singular Hermitian sesquilinear pairing
\[\Bl \colon H_1(M_K;\Z[\Z]) \times H_1(M_K;\Z[\Z]) \to \Q(\Z)/\Z[\Z] = \Q(t)/\Z[t,t^{-1}]\]
adjoint to the sequence of isomorphisms
\begin{multline*}\ol{H_1(M_K;\Z[\Z])} \xrightarrow{\simeq} H^2(M_K;\Z[\Z]) \xrightarrow{\simeq} H^1(M_K;\Q(\Z)/\Z[\Z]) \\ \xrightarrow{\simeq} \Hom_{\Z[\Z]}(H_1(M_K;\Z[\Z]),\frac{\Q(\Z)}{\Z[\Z]}),\end{multline*}
given by Poincar\'{e} duality, the inverse of a Bockstein homomorphism and the universal coefficient spectral sequence (see \cite{Levine2}).

We say that a Blanchfield form is \emph{metabolic} if it has a metaboliser.  A \emph{metaboliser} for the Blanchfield form is a submodule $P \subseteq H_1(M_K;\Z[\Z])$ such that:
\[P=P^{\bot}:= \{v \in H_1(M_K;\Z[\Z])\,|\, \Bl(v,w) = 0 \text{ for all } w \in P\}.\]
\qed \end{definition}

\begin{definition}\label{Defn:AC1}
The algebraic concordance group, first defined in \cite{Levine} and which we denote $\mathcal{AC}_1$, is defined as follows.  A Blanchfield form \cite{Blanchfield} is an Alexander $\Z[\Z]$-module $H$ (Theorem \ref{Thm:Levinemodule}) with a $\Z[\Z]$-module isomorphism:
\[\Bl \colon H \toiso H^{\wedge}:= \ol{\Hom_{\Z[\Z]}(H, \Q(\Z)/\Z[\Z])},\]
which satisfies $\Bl = \Bl^{\wedge}$.
We define the Witt group of equivalence classes of Blanchfield forms, with addition by direct sum and the inverse of $(H,\Bl)$ given by $(H,-\Bl)$. We call an element $(H,\Bl)$ metabolic if there exists a metaboliser $P \subseteq H$ such that $P = P^{\bot}$ with respect to $\Bl$.  We say that $(H,\Bl)$ is equivalent to $(H',\Bl')$ if $(H \oplus H',\Bl \oplus -\Bl')$ is metabolic.
Lemma \ref{Lemma:Cancellation_Blanchfield} states the rational version of the fact that this is transitive and is therefore an equivalence relation.  The integral version is harder, but follows from the proof (see e.g. \cite[Theorems~3.10~and~4.2]{Ranicki4}) of the fact that the Witt group of Seifert forms and the Witt group of Blanchfield forms are isomorphic.
\qed\end{definition}

We only prove the rational version of the following lemma, since this is what we will need in Proposition \ref{prop:COTobstr_equiv_rln} to see that the equivalence relation used to define $\mc{COT}_{(\C/1.5)}$ is transitive.  In particular, in the proof of Proposition \ref{prop:COTobstr_equiv_rln}, we will need an explicit description of the new metaboliser, as provided by Lemma \ref{Lemma:Cancellation_Blanchfield}.

The proof given is, in the author's opinion, the correct way to prove such a statement, since it shows most clearly the correspondence of the algebra to the underlying geometry.

\begin{lemma}\label{Lemma:Cancellation_Blanchfield}
Let $(H,\Bl)$ and $(H',\Bl')$ be rational Blanchfield forms.  Suppose that $(H \oplus H',\Bl \oplus \Bl')$ is metabolic with metaboliser $P = P^\bot \subseteq H \oplus H'$, and that $(H',\Bl')$ is metabolic with metaboliser $Q = Q^\bot \subseteq H'$.  Then $(H,\Bl)$ is also metabolic, and a metaboliser is given by
\[R := \{h \in H \, | \, \exists \, q \in Q \text{ with } (h,q) \in P\} \subseteq H.\]
\end{lemma}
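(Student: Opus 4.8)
The plan is to verify directly that the proposed submodule $R \subseteq H$ satisfies $R = R^\bot$ with respect to $\Bl$. There are two inclusions to establish, $R \subseteq R^\bot$ and $R^\bot \subseteq R$, and for the latter I will need a counting (or rather, a dimension/length) argument to close the gap, which is where the hypothesis that $Q$ is a metaboliser of $(H',\Bl')$ really gets used. First I would record the self-annihilation: given $h_1,h_2 \in R$ with witnesses $q_1,q_2 \in Q$ so that $(h_i,q_i) \in P$, the equation $0 = (\Bl\oplus\Bl')((h_1,q_1),(h_2,q_2)) = \Bl(h_1,h_2) + \Bl'(q_1,q_2)$ holds because $P = P^\bot$, and $\Bl'(q_1,q_2) = 0$ because $Q = Q^\bot$; hence $\Bl(h_1,h_2)=0$, giving $R \subseteq R^\bot$.

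For the reverse inclusion $R^\bot \subseteq R$, I would argue as follows. Over $\Q(t)$ the modules in play are finite-dimensional $\Q$-vector spaces (the Alexander modules are $\Z[\Z]$-torsion, $\Z$-torsion free, so become finite-dimensional after rationalising), and a metaboliser of a nonsingular form on a space $U$ has dimension exactly $\tfrac12\dim_\Q U$. So $\dim P = \tfrac12(\dim H + \dim H')$ and $\dim Q = \tfrac12 \dim H'$. Consider the two projections $\pi_H \colon P \to H$ and $\pi_{H'} \colon P \to H'$. I would show $R = \pi_H(P \cap (H \oplus Q))$ and analyse the exact sequence $0 \to P \cap (H\oplus 0) \to P \cap (H \oplus Q) \to Q\cap \pi_{H'}(P) \to 0$ together with $0 \to P\cap(0\oplus H') \to P \to \pi_H(P) \to 0$ and its dual using $P = P^\bot$ (which forces $\pi_H(P) = (P\cap(0\oplus H'))^\bot$ inside $H$, and similarly on the $H'$ side). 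Chasing these dimension counts, using $\dim Q = \tfrac12\dim H'$ and $Q = Q^\bot$, should yield $\dim R = \tfrac12 \dim H$. Combined with $R \subseteq R^\bot$ and nonsingularity of $\Bl$ (so $\dim R^\bot = \dim H - \dim R$), this forces $R = R^\bot$.

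The main obstacle I anticipate is the bookkeeping in that second half: one must be careful that the various "obvious" identities relating $\pi_H(P)$, $\pi_{H'}(P)$, and the intersections $P \cap (H\oplus 0)$, $P \cap (0 \oplus H')$ to orthogonal complements really do follow from $P = P^\bot$ and not merely from $P \subseteq P^\bot$, and that the submodule $Q$ interacts correctly with the image $\pi_{H'}(P)$ — in particular one wants $Q \subseteq \pi_{H'}(P)$ is \emph{not} needed, but rather the map $P\cap(H\oplus Q) \to Q \cap \pi_{H'}(P)$ and its cokernel must be controlled. A cleaner route, which I would try first, is to avoid explicit dimension counts and instead mimic the geometry directly: realise $(H,\Bl)$, $(H',\Bl')$, and the metabolisers by $3$-manifolds and half-lives-half-dies cobordisms, glue the cobordism realising $Q$ onto the $H'$-end of the cobordism realising $P$, and read off that the resulting cobordism of $M$ (realising $H$) exhibits $R$ as a metaboliser via the half-lives-half-dies principle; then the algebraic identity $R = R^\bot$ is forced by Poincaré–Lefschetz duality in the glued-up $4$-manifold. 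Either way, the heart of the matter is the reverse inclusion and the fact that the dimensions add up exactly, which is precisely the content of "half lives, half dies."
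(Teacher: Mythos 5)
Your proposal is correct in substance, but it takes a genuinely different route from the paper. The paper never does a dimension count: it reinterprets a Blanchfield form as a $0$-dimensional symmetric Poincar\'e complex in the category of finitely generated $\Q[t,t^{-1}]$-modules with $1-t$ acting as an automorphism, identifies a metaboliser with a $1$-dimensional symmetric Poincar\'e null-cobordism (via \cite[Propositions~3.2.2~and~3.4.5~(ii)]{Ranicki2}), glues the null-cobordism for $P$ to the one for $Q$ along $H'^\wedge$ using the union construction, and reads off the new metaboliser as $\im(H^0(D'') \to H^0(C))$ --- i.e.\ exactly the chain-complex version of the ``glue the cobordisms and apply half-lives-half-dies'' argument that you list as your alternative route, except done algebraically rather than with actual $3$- and $4$-manifolds. (This choice is deliberate: the same identification of metabolisers with algebraic null-cobordisms is what is reused in Proposition \ref{prop:COTobstr_equiv_rln}, where no underlying manifolds are available.) Your primary route, by contrast, is elementary: $R \subseteq R^\bot$ is immediate, and the reverse inclusion follows once one knows $\dim_\Q R = \tfrac12 \dim_\Q H$. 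That count does close up: writing $A = P \cap (H \oplus 0)$ and $B = P \cap (0 \oplus H')$, the identities $A = \pi_H(P)^{\bot}$, $B = \pi_{H'}(P)^{\bot}$ (and their double-perp converses) follow from $P = P^\bot$ together with the injectivity of $\Q(t)/\Q[t,t^{-1}]$ as a $\Q[t,t^{-1}]$-module, which also gives $\dim S^\bot = \dim H - \dim S$ for every submodule $S$; combining the exact sequences $0 \to A \to P\cap(H\oplus Q) \to Q \cap \pi_{H'}(P) \to 0$ and $0 \to Q\cap B \to P \cap (H \oplus Q) \to R \to 0$ with the modular identity $\dim(Q \cap B^\bot) = \dim H' - \dim Q - \dim B + \dim(Q\cap B)$ yields $\dim R = \tfrac12 \dim H$ after the $\dim \pi_H(P)$ terms cancel. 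One small slip to fix: you wrote that $P = P^\bot$ forces $\pi_H(P) = (P \cap (0 \oplus H'))^\bot$ ``inside $H$''; the correct statement is $\pi_H(P) = (P\cap(H\oplus 0))^{\bot}$ in $H$ (and $\pi_{H'}(P) = (P\cap(0\oplus H'))^\bot$ in $H'$). What each approach buys: yours is self-contained linear algebra and makes the dimension bookkeeping fully explicit; the paper's makes the correspondence with the geometric gluing transparent and transfers verbatim to the chain-complex setting where it is needed later.
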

\begin{proof}
A Blanchfield form is the same as a $0$-dimensional symmetric Poincar\'{e} complex in the category of finitely generated $\Q[t,t^{-1}]$-modules with $1-t$ acting as an automorphism.
By \cite[Propositions~3.2.2~and~3.4.5~(ii)]{Ranicki2}, a metaboliser $P$ for a Blanchfield form $(H,\Bl)$ is the same as a $1$-dimensional symmetric Poincar\'{e} pair
\[(f \colon C \to D, (0,\Bl^{\wedge})),\]
where $C = S^0 H^\wedge$ and $D = S^0P^\wedge$, in the category of finitely generated $\Q[t,t^{-1}]$-modules with $1-t$ acting as an automorphism.  This is an algebraic null--cobordism of $(H^\wedge,\Bl^\wedge)$.
Let
\[\left(\ba{c}g \\ g'\ea\right) \colon P \to H \oplus H' \text{ and } h \colon Q \to H'\]
be the inclusions of the metabolisers.  We therefore have symmetric Poincar\'{e} pairs:
\[(\left(\ba{cc}g^\wedge & g'^\wedge\ea\right) \colon H^\wedge \oplus H'^\wedge \to P^\wedge = D_0, (0,\Bl^\wedge \oplus \Bl'^\wedge))\]
and
\[(h^\wedge \colon H'^\wedge \to Q^\wedge = D'_0,(0,-\Bl'^\wedge)).\]
We have introduced a minus sign in front of $\Bl'^\wedge$, so that we can glue the two algebraic cobordisms together along $H'^\wedge$ to yield another algebraic cobordism:
\[\xymatrix @R+0.5cm @C+1cm{ & H'^\wedge = D''_1 \ar[d]^-{\left(\ba{c} g'^\wedge \\ h^\wedge \ea \right)}\\
H^\wedge = C_0 \ar[r]^-{\left(\ba{c} g^\wedge \\ 0 \ea \right)} & P^\wedge \oplus Q^\wedge = D''_0.
}\]
From this we deduce that:
\[\ol{R}:= \im\Big(H^0(D'') \to H^0(C) \Big)\]
is a metaboliser for $\Bl^\wedge \colon H^0(C) = H^{\wedge\wedge} \times H^{\wedge\wedge} \to \Q(t)/\Q[t,t^{-1}]$, where the over--line indicates the use of the involution.
Since the identification $H^{\wedge\wedge} \cong H$ involves an involution, we have that
\[\ol{\ol{R}} = R = \im \Big( \left( \ba{cc}g  & 0 \ea\right) \colon \ker \big( \left(\ba{cc}g'  & h \ea\right)\colon P \oplus Q \to H'\big) \to H \Big),\]
is a metaboliser for $\Bl$.  Finally, this is indeed equal to
\[\{h \in H \, | \, \exists \, q \in Q \text{ with } (h,q) \in P\},\]
as required.
\end{proof}

To define the map $\ac2 \to \mc{AC}_1$, we begin by taking an element $(H,\Y,\xi) \in \ac2$, and forming the algebraic equivalent of the zero surgery $M_K$.  Recall that we denote the triple associated to the unknot by $(\{0\},\Y^U,\Id_{\{0\}})$.  We construct the symmetric Poincar\'{e} complex:
\[(N,\theta) := ((Y \oplus (\zh \otimes_{\Z[\Z]} Y^U)) \cup_{E \oplus (\zh \otimes_{\Z[\Z]} E^U)} E, (\Phi \oplus 0) \cup_{\phi \oplus -\phi^U} 0).\]
In the case that $\Y = \Y^K$ is the fundamental symmetric Poincar\'{e} triad of a knot $K$, we have that
$N = N_K \simeq C_*(M_K;\zh).$  The key observation is that the Blanchfield form can be defined purely in terms of the symmetric Poincar\'{e} complex $(\Z[\Z] \otimes_{\zh} N, \Id \otimes \theta)$.

In the following, recall the standard notation $$(\Z[\Z] \otimes_{\zh} N)^i = \ol{\Hom_{\Z[\Z]}(\Z[\Z] \otimes_{\zh} N_i, \Z[\Z])}.$$

\begin{proposition}\label{Prop:chainlevelBlanchfield}
Given $[x],[y] \in H_1(\Z[\Z] \otimes_{\zh} N)$, the rational Blanchfield pairing of $[x]$ and $[y]$ is given by:
\[\Bl([x],[y]) = \frac{1}{s} \ol{z(x)}\]
where: \[x,y \in (\Z[\Z] \otimes_{\zh} N)_1,\; z \in (\Z[\Z] \otimes_{\zh} N)^1;\] \[\partial^*(z) = s\theta'_0 (y) \text{ for some } s \in \Z[\Z]- \{0\},\] and \[\theta'_0 \colon (\Z[\Z] \otimes_{\zh} N)_1 \to (\Z[\Z] \otimes_{\zh} N)^2\] is part of a chain homotopy inverse \[\theta_0' \colon (\Z[\Z] \otimes_{\zh} N)_r \to (\Z[\Z] \otimes_{\zh} N)^{3-r},\] so that \[\theta_0 \circ \theta'_0 \simeq \Id,\, \theta'_0 \circ \theta_0 \simeq \Id.\]  The Blanchfield pairing is non-singular, sesquilinear and Hermitian.
\end{proposition}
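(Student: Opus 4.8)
The plan is to realise the three isomorphisms of Definition~\ref{Defn:Blanchfieldform} at the chain level, using the symmetric Poincar\'e structure of $M := \Z[\Z] \otimes_{\zh} N$, and to read the formula off this realisation. First I would record the structural facts that make the recipe run. Since $(N,\theta)$ is built from a symmetric Poincar\'e triad by the union construction, $(M, \Id \otimes \theta)$ is a $3$-dimensional symmetric Poincar\'e complex of finitely generated projective $\Z[\Z]$-modules, with $H_1(M)$ identified with the Alexander module $H$ via the consistency isomorphism $\xi$ (so that $M$ is the algebraic zero--surgery), with $H_0(M) \cong \Z$ and $H_3(M) = 0$, and with $H_2(M) \cong H^1(M)$ torsion by Poincar\'e duality. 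Because $H$ is of type $K$ (Theorem~\ref{Thm:Levinemodule}), every $H_r(M)$ is $\Z[\Z]$-torsion apart from $H_0$ and $H_3$. Tensoring everything with $\Q$ once and for all, $\Q(\Z)/\Q[\Z]$ becomes injective over the principal ideal domain $\Q[\Z] = \Q[t,t^{-1}]$, the complex $M \otimes_{\Q[\Z]} \Q(\Z)$ is acyclic, the Bockstein $\beta$ of $0 \to \Q[\Z] \to \Q(\Z) \to \Q(\Z)/\Q[\Z] \to 0$ becomes an isomorphism on cohomology, the universal coefficient sequences collapse to $H^r(M;\Q(\Z)/\Q[\Z]) \cong \Hom_{\Q[\Z]}(H_r(M),\Q(\Z)/\Q[\Z])$, and $H^2(M;\Q[\Z]) \cong \Ext^1_{\Q[\Z]}(H_1(M),\Q[\Z]) \cong \ol{H}$ is torsion, in accordance with Blanchfield duality.

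With this in place I would trace the composite on cycles $x,y \in M_1$. Poincar\'e duality is the chain equivalence $\theta_0 \colon M^{3-*} \to M_*$ which, as a homology isomorphism of bounded complexes of projectives, admits a chain homotopy inverse $\theta_0' \colon M_* \to M^{3-*}$; then $[y] \mapsto [\theta_0'(y)]$ realises the duality isomorphism, landing in $H^2(M;\Q[\Z])$. As $\theta_0'(y) \in M^2$ is a cocycle representing a torsion class, there are $z \in M^1$ and $s \in \Q[\Z]-\{0\}$ with $\partial^*(z) = s\,\theta_0'(y)$; the reduction $s^{-1}z \bmod \Q[\Z]$ is a cocycle in $\Hom_{\Q[\Z]}(M_1,\Q(\Z)/\Q[\Z])$ (its coboundary is the integral cochain $\theta_0'(y)$), it represents $\beta^{-1}[\theta_0'(y)]$, and under the universal coefficient isomorphism it is the homomorphism $[x] \mapsto s^{-1}z(x) \bmod \Q[\Z]$. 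Carrying through the involution that is built into the identification $\ol{H} \cong H^2$ and into Ranicki's definition of the dual complex $M^{n-*}$ yields the asserted $\Bl([x],[y]) = \tfrac{1}{s}\,\ol{z(x)}$. Independence of all choices --- of $\theta_0'$ up to chain homotopy, of the pair $(z,s)$, and of the cycle representatives of $[x]$ and $[y]$ --- is the routine verification familiar from the treatment of linking forms by symmetric complexes: each permitted change alters $z(x)$ by an element of $\Q[\Z]$, which vanishes in $\Q(\Z)/\Q[\Z]$.

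It remains to establish the three adjectives. Sesquilinearity is immediate from the formula, since $x \mapsto z(x)$ is additive and $\Q[\Z]$-linear, so $\Bl(-,[y])$ is conjugate-linear, while $\Q[\Z]$-linearity of $\theta_0'$ lets one choose $z$ and $s$ linearly in $y$. Non-singularity holds because the adjoint $H \to \ol{\Hom_{\Q[\Z]}(H,\Q(\Z)/\Q[\Z])}$ is by construction the composite of the three maps just described, and each is an isomorphism: Poincar\'e duality because $\theta_0$ is a chain equivalence, the inverse Bockstein because $M \otimes \Q(\Z)$ is acyclic, and the universal coefficient map because $\Q[\Z]$ is a PID over which $\Q(\Z)/\Q[\Z]$ is injective. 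The Hermitian identity $\Bl([x],[y]) = \ol{\Bl([y],[x])}$ is the one assertion that genuinely uses the symmetric (rather than merely Poincar\'e) structure: here one invokes $\eps$-symmetry of $\theta$, i.e. that $\theta_0 - T_\eps\theta_0$ is null-homotopic via the higher structure map $\theta_1$, in order to compare the computation of $\Bl([x],[y])$ with the one obtained by interchanging $x$ and $y$; the discrepancy is a coboundary evaluated on a cycle together with a term factoring through $\theta_1$, each of which vanishes in $\Q(\Z)/\Q[\Z]$. Finally, when $N = N_K$ one has $M \simeq C_*(M_K;\Q[\Z])$ as symmetric Poincar\'e complexes, so the recipe above is literally Definition~\ref{Defn:Blanchfieldform} and the pairing agrees with the classical rational Blanchfield form.

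I expect the Hermitian symmetry to be the genuine obstacle: the other assertions reduce to formal properties of symmetric Poincar\'e complexes over a PID, but confirming $\Bl([x],[y]) = \ol{\Bl([y],[x])}$ requires a careful chain-level manipulation with the higher structure map $\theta_1$ and with Ranicki's sign and involution conventions, so as to see that the ``transposed'' computation of the pairing returns the complex conjugate of the original modulo $\Q[\Z]$.
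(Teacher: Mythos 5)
The paper omits its own proof of this proposition (deferring to Proposition~10.2 of the author's thesis), but your argument is correct and is exactly the intended one: you realise the three isomorphisms of Definition~\ref{Defn:Blanchfieldform} at the chain level --- Poincar\'e duality via the chain homotopy inverse $\theta_0'$, the inverse Bockstein of $0 \to \Q[\Z] \to \Q(\Z) \to \Q(\Z)/\Q[\Z] \to 0$ via clearing the denominator $s$, and the universal coefficient identification over the PID $\Q[t,t^{-1}]$ --- and then check sesquilinearity, non-singularity, and the Hermitian property (the last via the higher structure map $\theta_1$). Your identification of the Hermitian symmetry as the step requiring genuine chain-level care with $\theta_1$ and Ranicki's sign conventions is also accurate.
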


We omit the proof, since it is long but essentially comprises straight-forward computations.  See \cite[Proposition~10.2]{Powellthesis}.

\begin{proposition}\label{Thm:zeroAC2_goesto_zeroAC1}
There is a surjective homomorphism
$\ac2 \to \mathcal{AC}_1,$
which makes following diagram commute:
\[\xymatrix @C+1cm{
\C \ar[r] \ar @{->>} [d] & \mathcal{AC}_2 \ar @{->>} [d]  \\
\C/\mathcal{F}_{(0.5)} \ar[r]^{\simeq}  \ar[ur] & \mathcal{AC}_1.}\]
\end{proposition}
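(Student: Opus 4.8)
The plan is to use Proposition \ref{Prop:chainlevelBlanchfield} as the bridge: it tells us that from the symmetric Poincar\'{e} complex $(\Z[\Z]\otimes_{\zh} N, \Id\otimes\theta)$ associated to a triple $(H,\Y,\xi)\in\P$ we can extract a Blanchfield form purely algebraically. So I would first define the candidate map on the level of $\P$ by
\[(H,\Y,\xi) \longmapsto \big(H_1(\Z[\Z]\otimes_{\zh} N),\, \Bl\big),\]
using the consistency isomorphism $\xi$ to identify the underlying module with $H$ if desired, and check that this lands in the set of Blanchfield forms of Definition \ref{Defn:AC1}: non-singularity, sesquilinearity and the Hermitian property are all supplied by Proposition \ref{Prop:chainlevelBlanchfield}, and the fact that $H$ is an Alexander module (of type $K$, $\Z$-torsion free, Blanchfield dual) is part of the data of an element of $\P$ via $\mathcal{A}$. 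One should also note that equivalent triples in the sense of Definition \ref{Defn:algebraicsetofchaincomplexes} give isomorphic $(N,\theta)$ and hence the same Witt class, so the assignment is well-defined on $\P$.

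Next I would check that this assignment is a monoid homomorphism from $(\P,\sharp)$ to the Witt monoid of Blanchfield forms. Here the point is that the connected-sum construction of Definition \ref{Defn:connectsumalgebraic} is built by gluing along the annular pieces $D_{\pm}$, and passing to $\Z[\Z]$ coefficients and computing $H_1$ via an easy Mayer--Vietoris argument yields $H_1(\Z[\Z]\otimes N^{\ddag}) \cong H_1(\Z[\Z]\otimes N)\oplus H_1(\Z[\Z]\otimes N^{\dag})$, compatibly with $H^{\ddag}=H\oplus H^{\dag}$ (this is the chain-level shadow of Proposition \ref{prop:2ndderivedsubgroup_adding}); one then observes that the Blanchfield pairing of the glued complex is the orthogonal sum, since the formula of Proposition \ref{Prop:chainlevelBlanchfield} is local to each summand. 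Then I would show the relation $\sim$ of Definition \ref{defn:2ndorderconcordant} descends: given a $4$-dimensional symmetric Poincar\'{e} triad exhibiting $(H,\Y,\xi)\sim(H^{\dag},\Y^{\dag},\xi^{\dag})$, the complex $V$ provides a null-cobordism of $(\Z[\Z]\otimes N)\oplus -(\Z[\Z]\otimes N^{\dag})$; the first homological condition ($V$ a $\Z$-homology circle) plus Poincar\'{e}--Lefschetz duality forces the image of $H_1(\Z[\Z]\otimes N \oplus N^{\dag})\to H_1(\Z[\Z]\otimes V)$ to be a metaboliser for $\Bl\oplus -\Bl^{\dag}$, exactly as in the standard half-lives-half-dies argument — this is where I would invoke the symmetric-complex formulation of metabolisers (as in the proof of Lemma \ref{Lemma:Cancellation_Blanchfield}). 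Hence the map descends to a homomorphism $\ac2\to\mathcal{AC}_1$.

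For surjectivity, I would use that the composite $\C\to\ac2\to\mathcal{AC}_1$ agrees with the classical map $\C\to\C/\mathcal{F}_{(0.5)}\xrightarrow{\simeq}\mathcal{AC}_1$: for a knot $K$ the complex $N$ is chain equivalent to $C_*(M_K;\zh)$, and Proposition \ref{Prop:chainlevelBlanchfield} recovers precisely the classical Blanchfield form of Definition \ref{Defn:Blanchfieldform}, so the triangle in the statement commutes by construction. Since $\C\to\C/\mathcal{F}_{(0.5)}\cong\mathcal{AC}_1$ is already surjective (every Blanchfield/Seifert Witt class is realised by a knot, by Levine's realisation theorem, Theorem \ref{Thm:Levinemodule}), the composite $\C\to\ac2\to\mathcal{AC}_1$ is surjective, hence so is $\ac2\to\mathcal{AC}_1$; and commutativity of the square with $\C\to\C/\mathcal{F}_{(0.5)}$ is then automatic. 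The main obstacle I anticipate is the descent-of-$\sim$ step: one must verify carefully that the $4$-dimensional triad of Definition \ref{defn:2ndorderconcordant}, after tensoring down to $\Z[\Z]$, really produces a metaboliser of the correct size and that $R=R^\bot$ — i.e. that the algebraic surgery/duality bookkeeping over the PID $\Q[t^{\pm1}]$ (and integrally, via the Seifert-form comparison cited after Definition \ref{Defn:AC1}) genuinely gives a half-rank self-annihilating submodule rather than merely an isotropic one. The other steps are Mayer--Vietoris computations and diagram chases of the kind already carried out in the proof of Proposition \ref{prop:equivrelation}.
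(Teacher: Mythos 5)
Your overall route is the same as the paper's: define the map via the chain-level Blanchfield form of Proposition \ref{Prop:chainlevelBlanchfield}, get additivity from the absence of mixing in the connected-sum formulae plus a Mayer--Vietoris identification $H_1(\Z[\Z]\otimes Y)\cong H_1(\Z[\Z]\otimes N)$, and get surjectivity from Levine realisation. The gap is exactly at the step you flag as the ``main obstacle'', which is where the paper does its real work: it isolates this as Theorem \ref{thm:COT4.4chainversion}, the chain-complex analogue of \cite[Theorem~4.4]{COT}. Two concrete problems with your sketch of that step. First, the metaboliser is the \emph{kernel} of $j_*\colon H_1(\Q[\Z]\otimes (N\oplus N^{\dag}))\to H_1(\Q[\Z]\otimes V)$ (equivalently the image of the boundary map from $TH_2$ of the mapping cone), not the image of $j_*$, which lies in the wrong module altogether. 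Second, ``half lives, half dies'' gives the isotropy $P\subseteq P^{\bot}$ essentially formally, but the reverse inclusion $P^{\bot}\subseteq P$ requires (i) that the relative linking pairing on $TH_2(\Q[\Z]\otimes_{\zhd}\mathscr{C}(j))$ be non-singular and (ii) that $TH_2(\Q[\Z]\otimes_{\zhd}\mathscr{C}(j))\to H_1(\Q[\Z]\otimes N)\to H_1(\Q[\Z]\otimes_{\zhd}V)$ be exact. Neither follows from Poincar\'{e}--Lefschetz duality alone: both hinge on showing $H_*(\Q(\Z)\otimes_{\zhd}V)\cong 0$, which the paper obtains by feeding the $\Z$-homology condition of Definition \ref{defn:2ndorderconcordant} through Proposition \ref{Prop:COT2.10chainversion} applied to $\mathscr{C}(j\circ f_-)$ (this is where the PTFA/Ore-localisation machinery enters, even in the abelian case). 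Lemma \ref{Lemma:Cancellation_Blanchfield}, which you propose to invoke here, is the wrong tool: it glues two \emph{already known} metabolisers and is used for transitivity arguments, not to manufacture a metaboliser from a null-cobordism.

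The remaining points of your proposal (the injection of the integral Witt group into the rational one, so that a rational metaboliser restricts to an integral one; and the reduction of the two-triple case to the slice case via Proposition \ref{prop:inverseswork}) do match the paper. So the proposal is the right architecture with the decisive lemma left unproved and mis-sourced.
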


The bottom map is an isomorphism: see \cite[Remark~1.3.2]{COT}.

\begin{proof}
Given an element $(H,\Y,\xi) \in \ac2$, we can find the Blanchfield form on the $\Z[\Z]$-module:
\[\Bl \colon H_1(\Z[\Z] \otimes_{\zh} N) \times H_1(\Z[\Z] \otimes_{\zh} N) \to \Q(\Z)/\Z[\Z],\]
as in Proposition \ref{Prop:chainlevelBlanchfield}.  To see that addition commutes with the map $\ac2 \to \mathcal{AC}_1$, note that the Alexander modules add as in Proposition \ref{prop:2ndderivedsubgroup_adding}.  The symmetric structures also have no mixing between the chain complexes of $Y$ and $Y^{\dag}$ in the formulae in Definition \ref{Defn:connectsumalgebraic}, so that, noting that there is a Mayer--Vietoris sequence isomorphism $H_1(\Z[\Z] \otimes_{\zh} Y) \toiso H_1(\Z[\Z] \otimes_{\zh} N)$, the Blanchfield form of a connected sum in $\ac2$ is the direct sum of the two Blanchfield forms in the Witt group.  Surjectivity follows from the fact (see \cite{Levine2}) that every Blanchfield form is realised as the Blanchfield form of a knot, and therefore as the Blanchfield form of the fundamental symmetric Poincar\'{e} triad of a knot.

We will show the following, which we state as a separate result, and prove after the rest of the proof of Proposition \ref{Thm:zeroAC2_goesto_zeroAC1}:
\begin{theorem}\label{thm:COT4.4chainversion}
For triple $(H,\Y,\xi) \in \ac2$ which is second order algebraically concordant to the unknot, via a 4-dimensional symmetric Poincar\'{e} pair:
\[(j \colon \zhd \otimes_{\zh} N \to V,\, (\Theta,\theta)),\]
if we define:
\[P:= \ker(j_* \colon H_1(\Q[\Z]\otimes_{\zhd}\zhd \otimes_{\zh} N) \to H_1(\Q[\Z] \otimes_{\zhd} V)),\]
then $P$ is a metaboliser for the rational Blanchfield form on $H_1(\Q[\Z] \otimes_{\zh} N)$.
\end{theorem}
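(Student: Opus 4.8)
The plan is to transcribe the proof of \cite[Theorem~4.4]{COT} into the language of symmetric Poincar\'e complexes, using the chain-level description of the rational Blanchfield form from Proposition~\ref{Prop:chainlevelBlanchfield} in place of the classical one. \emph{Set-up.} First tensor the given $4$-dimensional symmetric Poincar\'e pair $(j \colon \zhd \otimes_{\zh} N \to V,(\Theta,\theta))$ down over the ring map $\zhd \to \Z[\Z] \hookrightarrow \Lambda := \Q[t^{\pm 1}]$, whose precomposition with $\zh \to \zhd$ is the augmentation $\zh \to \Lambda$ that kills $H$. Write $\mc{N} := \Lambda \otimes_{\zh} N$, $\mc{V} := \Lambda \otimes_{\zhd} V$, $\mc{j} \colon \mc{N} \to \mc{V}$, and $\mc{H} := H_1(\mc{N})$, so that $P = \ker(\mc{j}_* \colon \mc{H} \to H_1(\mc{V}))$ and $\Bl$ is defined on $\mc{H}$. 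Since $\Lambda$ is a principal ideal domain and a chain equivalence of bounded finitely generated projective complexes remains one after base change, $(\mc{j} \colon \mc{N} \to \mc{V},(\mc{\Theta},\mc{\theta}))$ is a $4$-dimensional symmetric Poincar\'e pair over $\Lambda$ with $\mc{N}$ itself symmetric Poincar\'e. The homological conditions of Definition~\ref{defn:2ndorderconcordant}, together with the universal coefficient spectral sequence and Theorem~\ref{Thm:Levinemodule}, show that every $H_*(\mc{N})$ and $H_*(\mc{V})$ is a finitely generated $\Lambda$-torsion module on which $1-t$ acts invertibly; in particular $\mc{N}$ and $\mc{V}$ are $\Q(t)$-acyclic and $H^2(\mc{N}) \cong \ol{\mc{H}}$.

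\emph{The duality ladder.} Assemble the long exact homology sequence of the pair (with $H_*(\mc{V},\mc{N}) := H_*(\mathscr{C}(\mc{j}))$), the long exact cohomology sequence of the pair, and the Poincar\'e--Lefschetz duality isomorphisms $\mc{\Theta}_0 \cap - \colon H^r(\mc{V}) \toiso H_{4-r}(\mc{V},\mc{N})$, $H^r(\mc{V},\mc{N}) \toiso H_{4-r}(\mc{V})$ and $\mc{\theta}_0 \cap - \colon H^r(\mc{N}) \toiso H_{3-r}(\mc{N})$; these maps and the squares relating them are exactly what the relative symmetric structure of the pair supplies, via the naturality of the cap product and $\partial[\mc{V},\mc{N}] = [\mc{N}]$ \cite[Part~I]{Ranicki3}. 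Reading off the resulting commutative ladder yields the identification
\[ P = \im\big(\partial \colon H_2(\mc{V},\mc{N}) \to H_1(\mc{N})\big) \cong \im\big(\mc{j}^* \colon H^2(\mc{V}) \to H^2(\mc{N}) \cong \ol{\mc{H}}\big). \]

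\emph{The inclusion $P \subseteq P^{\bot}$.} This is the chain-level incarnation of the principle that the Blanchfield pairing of classes that bound in a $4$-manifold is computed by the $\Lambda$-valued intersection form there. Given $[x],[y] \in P$, Proposition~\ref{Prop:chainlevelBlanchfield} gives $\Bl([x],[y]) = \tfrac{1}{s}\,\ol{z(x)}$ with $z \in \mc{N}^1$, $\partial^*(z) = s\,\mc{\theta}'_0(y)$ and $s \neq 0$. Because $\mc{j}(y)$ is a boundary in $\mc{V}$, the Poincar\'e structure $\mc{\Theta}$ of $\mc{V}$ and its compatibility with $\mc{\theta}$ across $\mc{j}$ (equivalently, that $\mc{\theta}$ is homotopic to the boundary structure of $\mc{\Theta}$) produce a representative $z$ for which $s$ may be taken to be a unit of $\Lambda$, so $\Bl([x],[y]) = \ol{z(x)} \in \Lambda$ and hence vanishes in $\Q(t)/\Lambda$. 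Structurally: under the ladder of the previous step the restriction of $\Bl$ to $P$ factors through the $\Lambda$-valued symmetric form $H^2(\mc{V}) \times H^2(\mc{V}) \to \Lambda$ determined by $\mc{\Theta}_0$, and $\Lambda$-valued elements are zero in $\Q(t)/\Lambda$. This is the main obstacle: turning the geometric ``compute the pairing inside the bounding $4$-manifold'' argument into the assertion that the formula of Proposition~\ref{Prop:chainlevelBlanchfield} admits a unit denominator once $[y]$ dies in $\mc{V}$ requires careful bookkeeping of the relative cap products of \cite[Part~I]{Ranicki3} and of the coherence between $\theta$ and $\Theta$; setting up the duality ladder with correct signs is similarly fiddly but routine.

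\emph{The inclusion $P^{\bot} \subseteq P$ and conclusion.} Since $\Bl$ is non-singular and $\mc{H}$ has finite length over $\Lambda$, exactness of $\Hom_\Lambda(-,\Q(t)/\Lambda)$ on torsion modules gives $\ell_\Lambda(P^{\bot}) = \ell_\Lambda(\mc{H}) - \ell_\Lambda(P)$; combined with $P \subseteq P^{\bot}$ from the previous paragraph it remains to prove the algebraic ``half lives, half dies'' identity $\ell_\Lambda(P) = \tfrac{1}{2}\,\ell_\Lambda(\mc{H})$. This follows by a length count around the duality ladder, exactly as in \cite[Theorem~4.4]{COT}: Poincar\'e--Lefschetz duality converts the homology exact sequence of $(\mc{V},\mc{N})$ into its cohomology exact sequence and identifies the maps out of $H^2(\mc{V})$ and into $H_1(\mc{N})$ as above, and combining these identifications with exactness and $\chi(\mc{V}) = \chi(S^1) = 0$ forces $\ell_\Lambda(\mc{H}) = 2\,\ell_\Lambda(P)$. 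Hence $P = P^{\bot}$, i.e. $P$ is a metaboliser for the rational Blanchfield form on $\mc{H} = H_1(\Q[\Z] \otimes_{\zh} N)$.
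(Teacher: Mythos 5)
Your overall strategy --- transcribe the proof of \cite[Theorem~4.4]{COT} into the chain-complex dictionary, establish $\Q(t)$-acyclicity of $V$ so that all the relevant modules are $\Lambda$-torsion, and then run the duality ladder --- is exactly the route the paper takes (the paper defers the ladder itself to \COTN and isolates precisely two chain-level verifications: (i) $H_*(\Q(\Z)\otimes_{\zhd}V)\cong 0$, hence non-singularity of the relative linking pairing on $TH_2(\Q[\Z]\otimes_{\zhd}\mathscr{C}(j))$, and (ii) exactness of $TH_2(\Q[\Z]\otimes_{\zhd}\mathscr{C}(j))\to H_1(\Q[\Z]\otimes_{\zh}N)\to H_1(\Q[\Z]\otimes_{\zhd}V)$). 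Your set-up and your $\Q(t)$-acyclicity argument over the PID $\Lambda=\Q[t^{\pm1}]$ are fine. However, your proof of $P\subseteq P^{\bot}$ has a genuine error. You claim that because $j(y)$ bounds in $V$, one can choose the cochain $z$ in Proposition~\ref{Prop:chainlevelBlanchfield} so that $s$ is a unit. The equation $\partial^*(z)=s\theta_0'(y)$ says that $s$ annihilates the Poincar\'{e} dual class $[\theta_0'(y)]\in H^2(\Q[\Z]\otimes_{\zh}N)$, which is a torsion module isomorphic to $\ol{H_1(\Q[\Z]\otimes_{\zh}N)}$; so $s$ can be a unit only if $[y]=0$. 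Bounding in $V$ does not make $\theta_0'(y)$ a coboundary in $N$. Your ``structural'' fallback is circular for the same reason: any $\Lambda$-valued pairing on the torsion module $H^2(\Lambda\otimes_{\zhd}V)$ is identically zero, so asserting that $\Bl|_{P\times P}$ factors through such a pairing is merely a restatement of the conclusion $\Bl|_{P\times P}=0$, not a proof of it. The pairing that actually carries the information is the $\Q(t)/\Lambda$-valued relative linking form $\beta_{rel}$ on $TH_2(\Q[\Z]\otimes_{\zhd}\mathscr{C}(j))$, and the correct argument is its compatibility with $\Bl$ under $\partial$ together with $j_*x=0$ for $x\in P$.

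Two smaller points. First, your length count for $P^{\bot}\subseteq P$ is essentially sound but needs the paper's point (ii) made explicit: you must know $H_2(\Q[\Z]\otimes_{\zhd}\mathscr{C}(j))$ is entirely torsion (it follows from your $\Q(t)$-acyclicity statements by the long exact sequence, but you never say so) before $\im(\partial)$ can be identified with $\im\big((j_*)^{\wedge}\big)$ via Poincar\'{e}--Lefschetz duality and the universal coefficient theorem. Second, once you have that identification you do not need a separate $P\subseteq P^{\bot}$ step at all: the square relating $\partial$ to $j^*$ shows that the adjoint of $\Bl$ carries $P=\im(\partial)$ onto the annihilator of $P$ in $\Hom_{\Lambda}(H_1(\Q[\Z]\otimes_{\zh}N),\Q(t)/\Lambda)$, and non-singularity of $\Bl$ then gives $P=P^{\bot}$ in one stroke. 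Repairing your argument along either of these lines (or simply citing the relative linking form argument of \cite[Theorem~4.4]{COT} after verifying (i) and (ii), as the paper does) closes the gap.
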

Before proving Theorem \ref{thm:COT4.4chainversion}, we will first show how it implies Proposition \ref{Thm:zeroAC2_goesto_zeroAC1}.  The Witt group of rational Blanchfield forms is defined as in Definitions \ref{Defn:Blanchfieldform} and \ref{Defn:AC1} and Proposition \ref{Prop:chainlevelBlanchfield}, but with the coefficient ring $\Z$ replaced by $\Q$.  Now recall that the Witt group of integral Blanchfield forms injects into the Witt group of rational Blanchfield forms. To see this, first note that:
\[H_1(\Z[\Z] \otimes_{\zh} N) \rightarrowtail H_1(\Q[\Z] \otimes_{\zh} N) \cong \Q \otimes_{\Z} H_1(\Z[\Z] \otimes_{\zh} N).\]
The first map is an injection since $H_1(\Z[\Z] \otimes_{\zh} N)$ is $\Z$-torsion free (Theorem \ref{Thm:Levinemodule}), while the second map is an isomorphism as $\Q$ is flat as a $\Z$-module.  Then suppose that we have a metaboliser $P_{\Q}$ for the rational Blanchfield form.  This restricts to a metaboliser \[P_{\Z} := P_{\Q} \cap (\Z \otimes_{\Z} H_1(\Z[\Z] \otimes_{\zh} N))\] for the integral Blanchfield form, since the calculation, restricted to the image of $H_1(\Z[\Z] \otimes_{\zh} N)$, is the same for the two forms.  The symmetric structure map in the rational case is just the integral map tensored up with the rationals; $(\theta'_0)_{\Q} = \Id_{\Q} \otimes_{\Z} (\theta'_0)_{\Z}$.

Therefore, the only place that the two calculations could differ is if one took $s \in \Q[\Z] \setminus \Z[\Z]$ or $z \in (\Q[\Z] \otimes_{\zh} N)^1 \setminus (\Z[\Z] \otimes_{\zh} N)^1.$  Note that we can consider $(\Z[\Z] \otimes_{\zh} N)^1$ as a subset of $(\Q[\Z] \otimes_{\zh} N)^1$ since $\Q[\Z] \otimes_{\zh} N \cong \Q \otimes_{\Z} \Z[\Z] \otimes_{\zh} N$, and $\Q[\Z] \cong \Q \otimes_{\Z} \Z[\Z]$.  In the cases that such an $s$ or such a $z$ are chosen, we can clear denominators in the equation
$\partial^* (z) = s\theta'_0(y)$
to get
$\partial^*(nz) = ns\theta'_0(y),$
for some $n \in \Z$, so that now $ns \in \Z[\Z]$ and $nz \in (\Z[\Z] \otimes_{\zh} N)^1$. Then:
\[\frac{1}{ns}\ol{(nz)(x)} = \frac{n}{ns}\ol{z(x)} = \frac{1}{s}\ol{z(x)},\]
which is the same outcome.
By Theorem \ref{thm:COT4.4chainversion}, second order algebraically slice triples map to metabolic rational Blanchfield forms, which we have now seen restrict to metabolic integral Blanchfield forms.  By applying Proposition \ref{prop:inverseswork}, we see that we have a well--defined homomorphism as claimed.  This completes the proof of Proposition \ref{Thm:zeroAC2_goesto_zeroAC1}, modulo Theorem \ref{thm:COT4.4chainversion}.
\end{proof}

Next, we will prove Theorem \ref{thm:COT4.4chainversion}.  This theorem is an algebraic reworking of \cite[Theorem~4.4]{COT}, which we state here (for $n=1$).

\begin{theorem}[\cite{COT} Theorem 4.4]\label{Lemma:COT4.4}
Suppose $M_K$ is $(1)$-solvable via $W$.  Then the rational Blanchfield form of $M_K$ is metabolic, and in fact if we define:
\[P:= \ker(i_* \colon H_1(M_K;\Q[\Z]) \to H_1(W;\Q[\Z])),\]
then $P=P^{\bot}$ with respect to $\Bl$.
\end{theorem}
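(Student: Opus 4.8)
The plan is to reconstruct the argument of Cochran--Orr--Teichner. Write $\L = \Q[t,t^{-1}] = \Q[\Z]$, a PID with field of fractions $\Q(t) = \Q(\Z)$. Since the Alexander polynomial of $K$ is non--zero, $H := H_1(M_K;\L)$ is $\L$--torsion, $H_*(M_K;\Q(t)) = 0$, and the Blanchfield pairing $\Bl \colon H \times H \to \Q(t)/\L$ is non--singular, sesquilinear and Hermitian. Because $i_*$ is an isomorphism on $H_1(-;\Z)$, the coefficient systems $\pi_1(M_K) \to \Z$ and $\pi_1(W) \to \Z$ (both detected by a meridian) are compatible, so the long exact sequence of $(W,M_K)$ with $\L$--coefficients makes sense; as $H_*(M_K;\Q(t)) = 0$ the connecting map $\partial \colon H_2(W,M_K;\L) \to H_1(M_K;\L)$ has image exactly $P = \ker i_*$. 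Feeding in Poincar\'e--Lefschetz duality $H_2(W,M_K;\L) \cong H^2(W;\L)$, the universal coefficient theorem over the PID $\L$, and the standard fact (Levine) that $H_1(W;\L)$ is also $\L$--torsion, one gets a commutative ladder relating the $(W,M_K)$--sequences in $\L$--, $\Q(t)$-- and $\Q(t)/\L$--coefficients, linked by Bocksteins and the duality isomorphisms, through which $\Bl$ factors. All of the argument is extracted from this ladder.

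Next I would establish $P \subseteq P^{\bot}$. This is the ``half dies'' phenomenon and needs no hypothesis beyond $\partial W = M_K$: one uses the standard identification of the Blanchfield (linking) form of the boundary with the relative equivariant intersection pairing of $W$, together with exactness of the $\L$--coefficient sequence of $(W,M_K)$. Concretely, for $x,y \in P = \operatorname{im}\partial$, represent $x,y$ by $1$--cycles in the infinite cyclic cover $\widetilde{M_K}$; since $i_*x = i_*y = 0$ they bound $\L$--$2$--chains in $\widetilde W$, and since $H$ is torsion, some multiple $px$ bounds a $2$--chain $u$ inside $\widetilde{M_K}$. Combining these produces a $2$--cycle of $\widetilde W$ lifting $px - (\text{boundary stuff})$, and chasing the ladder shows $\Bl(x,y) \in \Q(t)/\L$ is represented by an element of $\L$ built from the $\L$--valued intersection numbers of honest $\L$--chains in the interior of $W^4$; hence $\Bl(x,y) = 0$.

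The real work, and where the $(1)$--solvability of $W$ is genuinely used (via the Lagrangian--with--dual structure of its $\Z$--intersection form --- for an arbitrary $4$--manifold bounding $M_K$ the form $\Bl$ is not metabolic), is the reverse inclusion $P^{\bot} \subseteq P$. Non--singularity of $\Bl$ gives an $\L$--module isomorphism $H \xrightarrow{\simeq} \ol{\Hom_\L(H,\Q(t)/\L)}$ carrying $P^{\bot}$ onto $\operatorname{ann}(P)$, so the order ideals satisfy $|P| \cdot |P^{\bot}| = |H|$, and it suffices to prove $|P|^2 = |H|$. The duality and universal coefficient identifications of the first paragraph present $P$ as $\coker\big(\lambda_W \oplus \beta \colon H_2(W;\L) \to \ol{\Hom_\L(H_2(W;\L),\L)} \oplus \ol{\Ext^1_\L(H_1(W;\L),\L)}\big)$, where $\lambda_W$ is the equivariant $\L$--intersection form of $W$ and $\beta$ is a Bockstein. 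Now a $(1)$--solution has $H_2(W;\Z)$ free of rank $2n$ with an intersection Lagrangian admitting a dual half--basis, so $\lambda_W$ is a Hermitian form, non--singular over $\Q(t)$, whose determinant is --- up to units of $\L$ --- the square of the ``half--determinant'' cut out by the Lagrangian; matching this contribution of $\coker\lambda_W$ on the free summand against the $\Ext$ contribution (the order of $H_1(W;\L)$) through the Mayer--Vietoris/duality diagram forces $|P|^2 = |H|$, whence $P = P^{\bot}$. I expect this order computation --- the precise algebraic incarnation of ``half lives, half dies'' for the Blanchfield linking form, carried out with the free and $\Ext$ summands of the universal coefficient sequence tracked simultaneously --- to be the main obstacle; by contrast, setting up the ladder of exact sequences, checking that $\Bl$ is computed by the relative intersection pairing of $W$, and the $P \subseteq P^{\bot}$ half are routine once the framework is in place.
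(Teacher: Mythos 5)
Be aware first that the paper does not prove this statement: it is quoted verbatim from Cochran--Orr--Teichner, and the only proof in the paper is of the chain-complex analogue (Theorem \ref{thm:COT4.4chainversion}), whose proof explicitly isolates the two non-formal inputs of the COT argument: (i) non-singularity of the relative linking pairing $\beta_{rel}\colon TH_2(W,M_K;\Q[\Z])\times TH_1(W;\Q[\Z])\to \Q(\Z)/\Q[\Z]$, and (ii) exactness of $TH_2(W,M_K;\Q[\Z])\xrightarrow{\partial} H_1(M_K;\Q[\Z])\xrightarrow{i_*} H_1(W;\Q[\Z])$. Your first two paragraphs (the coefficient ladder, $\operatorname{im}\partial=P$, and the chain-level argument for $P\subseteq P^{\bot}$) are consistent with this and are fine in outline. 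But your third paragraph diverges from the actual argument at exactly the point where the work is, and it has a genuine gap. The reverse inclusion in COT is a two-line consequence of (i) and (ii): given $x\in P^{\bot}$, for every torsion relative class $\tilde y$ one has $\beta_{rel}(\tilde y, i_*x)=\pm\Bl(\partial\tilde y,x)=0$ since $\partial\tilde y\in P$; non-singularity of $\beta_{rel}$ together with $H_1(W;\Q[\Z])$ being entirely torsion then forces $i_*x=0$. No determinant count is needed, and the solvability hypotheses are consumed in establishing (i) and (ii) (via the vanishing of the relevant $\Q(\Z)$-coefficient homology of $W$, which is where the condition that the Lagrangian and its duals generate $H_2(W;\Z)$ enters).

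Your proposed substitute --- reducing to $\Delta(P)^2=\Delta(H)$ and computing $\Delta(P)$ as the order of $\coker(\lambda_W\oplus\beta)$ --- is sound only up to the reduction (the identity $\Delta(P)\cdot\Delta(P^{\bot})=\Delta(H)$ does follow from non-singularity of $\Bl$ and injectivity of $\Q(\Z)/\Q[\Z]$). The computation itself is not carried out, and its central claim is unjustified: a $(1)$-Lagrangian is a submodule of $H_2(W;\Z[\Z])$ required only to map \emph{onto a Lagrangian of the $\Z$-coefficient form $\lambda_0$}; it is not a half-rank summand of $H_2(W;\Q[\Z])$, that module need not be free or of $\Q[\Z]$-rank $2n$, and there is no reason offered that $\det\lambda_W$ is a square up to units. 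Worse, the identity you would need relating $\det\lambda_W$, $\Delta(H_1(W;\Q[\Z]))$ and $\Delta(H_1(M_K;\Q[\Z]))$ so that the free and $\Ext$ contributions ``match'' is essentially equivalent to the statement being proved, and nothing in the proposal establishes it. You also nowhere address point (ii), the exactness of the torsion sequence, which is not automatic (an element of $\operatorname{im}\partial$ need not lift to a \emph{torsion} relative class without using the structure of the solution) and which your $P\subseteq P^{\bot}$ argument implicitly relies on when it pairs relative classes against $i_*$ of absolute ones. To repair the proof, replace the order computation by establishing (i) and (ii) and then running the short duality argument above.
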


In Section \ref{Chapter:extractingCOTobstructions}, Theorem \ref{thm:COT4.4chainversion} will be crucial for the control which the Blanchfield form provides on which 1-cycles of $\Q[\Z] \otimes_{\zh} N$ bound in some 4-dimensional pair, which in turn controls which representations extend over putative algebraic slice disc exteriors.  The proof will require the following proposition (\ref{Prop:COT2.10chainversion}) of \cite{COT}.  Since we will also require the use of Proposition \ref{Prop:COT2.10chainversion} when extracting the \COT obstructions, we give the statement here in the non--commutative setting, even though this is not required for the proof of Theorem \ref{thm:COT4.4chainversion}.  Before we can do this, we need two definitions.

\begin{definition}\label{defn:PTFA}
A \emph{Poly--Torsion--Free--Abelian}, or PTFA, group $\G$ is a group which admits a finite sequence of normal subgroups $\{1\} = \G_0 \lhd \G_1 \lhd ... \lhd \G_k = \G$ such that the successive quotients $\G_{i+1}/\G_i$ are torsion-free abelian for each $i \geq 0$.
\qed\end{definition}
\begin{definition}\label{Defn:OreLocalisation}
The \emph{Ore condition} determines whether a multiplicative subset $S$ of a non-commutative ring without zero-divisors can be formally inverted. A ring $A$ satisfies the Ore condition if, given $s \in S$ and $a \in A$, there exists $t \in S$ and $b \in A$ such that $at=sb$.  Then the Ore localisation $S^{-1}A$ exists.  If $S = A-\{0\}$ then $S^{-1}A$ is a skew-field which we denote by $\mathcal{K}(A)$, or sometimes just $\K$ if $A$ is understood.
\qed \end{definition}
Note that if $A=\Z[\Z]$, then $\K(A) = \Q(\Z)$.  The rational group ring of a PTFA group satisfies the Ore condition \cite[Proposition~2.5]{COT}.  See \cite[Chapter~2]{Stenstrom} for more details on the Ore condition, such as for the fact that the Ore localisation $\K(A)$ is flat as a module over $A$.

\begin{proposition}\label{Prop:COT2.10chainversion} \cite[Proposition~2.10]{COT}
Let $\G$ be a PTFA group.  If $C_*$ is a nonnegative chain complex over $\Q\G$ which is finitely generated projective in dimensions $0 \leq i \leq n$ and such that $H_i(\Q \otimes_{\Q\G} C_*) \cong 0$ for $0 \leq i \leq n$, then $H_i(\K \otimes_{\Q\G} C_*) \cong 0$.
\end{proposition}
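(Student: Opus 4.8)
This is \cite[Proposition~2.10]{COT}, an algebraic input borrowed from the Cochran--Orr--Teichner paper; I would reprove it along standard lines. The one structural fact used is that $\K=\K(\Q\G)$ is \emph{flat} as a $\Q\G$-module: the rational group ring of a torsion-free group is a domain, the Ore condition for PTFA rational group rings is \cite[Proposition~2.5]{COT}, and Ore localisations of Ore domains are flat. Flatness gives $H_i(\K\otimes_{\Q\G}C_*)\cong\K\otimes_{\Q\G}H_i(C_*)$ in every degree, so the proposition is equivalent to the statement that $H_i(C_*)$ is a $\Q\G$-torsion module (i.e.\ $\K\otimes_{\Q\G}H_i(C_*)=0$) for $0\le i\le n$.

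The first ingredient is a module lemma: a finitely generated $\Q\G$-module $M$ with $\Q\otimes_{\Q\G}M=0$ is $\Q\G$-torsion. I would prove this by choosing a surjection $\G\twoheadrightarrow\Z$ --- available for finitely generated PTFA $\G$, which is the case in all of our applications, and treated in full generality in \cite{COT} --- so that $\bar M:=\Q[\Z]\otimes_{\Q\G}M$ is a finitely generated module over the PID $\Q[t^{\pm1}]$ with $\bar M/(t-1)\bar M\cong\Q\otimes_{\Q\G}M=0$. The structure theorem then forces $\bar M$ to be $\Q[t^{\pm1}]$-torsion, hence $\Q(t)\otimes_{\Q[\Z]}\bar M=0$. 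Since every nonzero element of $\Q[t^{\pm1}]$ maps to a nonzero element of the domain $\Q\G$, hence to a unit of $\K$, the composite $\Q[t^{\pm1}]\hookrightarrow\Q\G\to\K$ extends to a ring homomorphism $\Q(t)\to\K$, so $\K\otimes_{\Q\G}M\cong\K\otimes_{\Q(t)}(\Q(t)\otimes_{\Q\G}M)=0$.

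The proposition now follows by induction on $n$. For $n=0$, the module $H_0(C_*)=\coker(d_1\colon C_1\to C_0)$ is finitely generated (a quotient of $C_0$) and satisfies $\Q\otimes_{\Q\G}H_0(C_*)=H_0(\Q\otimes_{\Q\G}C_*)=0$, so the module lemma gives $\K\otimes_{\Q\G}H_0(C_*)=0$. For the inductive step, apply the case $n-1$ to conclude $H_i(\K\otimes_{\Q\G}C_*)=0$ for $i\le n-1$, and then kill $H_n$: passing to a finite subcomplex through which $H_n$ is already computed, and using that over the skew field $\K$ a complex of finitely generated free modules that is acyclic below its top degree is governed by that top homology, reduces the remaining claim to the module lemma applied to (a finitely generated piece of) $H_n(C_*)$. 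The main obstacle is exactly this inductive step: one must carefully reconcile the homology of $C_*$ with that of the finite truncations and with the $\Q$- and $\K$-coefficient reductions, and check that the (possibly infinitely generated, over the possibly non-Noetherian $\Q\G$) terms of $C_*$ in degrees above $n$ are irrelevant --- a point that is vacuous in our setting, where $C_*$ is always a finite complex of finitely generated free modules, which I would simply point out.
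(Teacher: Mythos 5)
The paper does not actually reprove this proposition: it quotes \cite[Proposition~2.10]{COT} and adds only the remark that ``finitely generated free'' may be relaxed to ``finitely generated projective'' because the COT argument --- which lifts a partial chain contraction of $\Q\otimes_{\Q\G}C_*$ to maps $\sigma_i\colon C_i\to C_{i+1}$ over $\Q\G$ and then shows $d\sigma+\sigma d$ becomes invertible over $\K$ since its augmentation is the identity --- still applies. Your proposal takes a genuinely different route (flatness of $\K$ plus a module lemma), and it has two real gaps. The first is in the module lemma: the concluding step rests on the isomorphism $\K\otimes_{\Q\G}M\cong\K\otimes_{\Q(t)}(\Q(t)\otimes_{\Q\G}M)$, which conflates two different ring maps. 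The module $\bar M$ and hence $\Q(t)\otimes_{\Q\G}M$ are formed via the \emph{surjection} $\Q\G\to\Q[\Z]$, whereas $\K$ is a $\Q(t)$-algebra only via the \emph{inclusion} $\Q[t^{\pm1}]\hookrightarrow\Q\G\to\K$; the displayed isomorphism is false in general (for $\G=\Z^2=\langle s,t\rangle$, projecting onto $\langle t\rangle$, and $M=\Q\G/(s-1)$, the left side is $0$ while the right side is $\K$). The implication you actually need --- $\bar M$ torsion over $\Q[t^{\pm1}]$ implies $M$ torsion over $\Q\G$ for finitely generated $M$ --- is true, but it is essentially \cite[Proposition~2.9]{COT} and requires a rank-semicontinuity argument by induction on the PTFA series, not a tensor identity. (Also, ``the rational group ring of a torsion-free group is a domain'' is Kaplansky's conjecture in general; for PTFA groups it is a theorem, and is part of what \cite[Proposition~2.5]{COT} provides.)

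The second and more serious gap is the reduction of the proposition to the module lemma. Flatness of $\K$ does give $H_i(\K\otimes_{\Q\G}C_*)\cong\K\otimes_{\Q\G}H_i(C_*)$, but the hypothesis supplies $H_i(\Q\otimes_{\Q\G}C_*)=0$, not $\Q\otimes_{\Q\G}H_i(C_*)=0$, and since $\Q$ is \emph{not} flat over $\Q\G$ these differ for $i\geq 1$ (they agree only for $i=0$, by right-exactness, which is why your base case works). Moreover $H_i(C_*)=\ker d_i/\im d_{i+1}$ need not be finitely generated over the non-Noetherian ring $\Q\G$, so the module lemma does not apply to it even when the vanishing of $\Q\otimes H_i$ is available. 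Your inductive step names both difficulties but resolves neither: ``passing to a finite subcomplex through which $H_n$ is already computed'' and applying the lemma to ``a finitely generated piece of $H_n(C_*)$'' is precisely the content that is missing. This is why the COT proof avoids the homology modules altogether and works instead with the lifted partial chain contraction, using the key fact that a square matrix over $\Q\G$ whose augmentation is invertible over $\Q$ becomes invertible over $\K$; that is also the step the paper's remark about projective modules is referring to.
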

The statement of \cite[Proposition~2.10]{COT} is made with the hypothesis that the chain complex is finitely generated free.  We note that the statement can be relaxed to $C$ being a finitely generated projective module chain complex, since this still allows the lifting of the partial chain homotopies.

\begin{proof}[Proof of Theorem \ref{thm:COT4.4chainversion}]
A large part of this proof can be carried over verbatim from the proof of \cite[Theorem~4.4]{COT}, subject to a manifold--chain complex dictionary, as follows.  The homology of $M_K$ with coefficients in a ring $R$ should be replaced with the homology of:
$R \otimes_{\zh} N;$
the (co)homology of $W$ with coefficients in $R$ should be replaced with the (co)homology of:
$R \otimes_{\zhd} V;$ and the homology of the pair $(W,M_K)$ with coefficients in $R$ should be replaced with the homology of:
\[R \otimes_{\zhd} \mathscr{C}(j \colon \zhd \otimes_{\zh} N \to V).\]
To complete the proof we need to show that:
\begin{description}
         \item[(i)] The relative linking pairings $\beta_{rel}$ are non-singular.  This will follow from the argument in the proof of \cite[Theorem~4.4]{COT} once we show, for an algebraic $(1.5)$-solution $V$, that $H_*(\Q(\Z) \otimes_{\zhd} V) \cong 0.$  Note that this also implies by universal coefficients that $H^*(\Q(\Z) \otimes_{\zhd} V) \cong 0,$ and that $H_*(\Q[\Z] \otimes_{\zhd} V)$ is torsion, since $\Q(\Z)$ is flat over $\Q[\Z]$.
         \item[(ii)] The sequence \[TH_2(\Q[\Z] \otimes_{\zhd} \mathscr{C}(j)) \xrightarrow{\partial} H_1(\Q[\Z] \otimes_{\zh} N) \xrightarrow{j_*} H_1(\Q[\Z] \otimes_{\zhd} V)\] is exact.
       \end{description}
To prove (i) we apply Proposition \ref{Prop:COT2.10chainversion} to the chain complex \[\Q[\Z] \otimes_{\zhd} \mathscr{C}(j \circ f_- \colon \zhd \otimes_{\zh} D_- \to V).\]  Since $j \circ f_-$ induces isomorphisms on rational homology, the relative homology groups vanish:
\[H_*(\Q \otimes_{\Q[\Z]} \Q[\Z] \otimes_{\zhd} \mathscr{C}(j \circ f_-)) \cong 0.\]
Proposition \ref{Prop:COT2.10chainversion} then says that:
\[H_*(\Q(\Z) \otimes_{\Q[\Z]} \Q[\Z] \otimes_{\zhd} \mathscr{C}(j \circ f_-)) \cong 0,\]
which implies the second isomorphism of:
\begin{eqnarray*} H_*(\Q(\Z) \otimes_{\zhd} V) & \cong & H_*(\Q(\Z) \otimes_{\Q[\Z]} \Q[\Z] \otimes_{\zhd} V) \\ & \cong & H_*(\Q(\Z) \otimes_{\Q[\Z]} \Q[\Z] \otimes_{\zh} D_-).\end{eqnarray*}
Then since $\Q(\Z) \otimes_{\zh} D_-$ is given by the contractible chain complex $\Q(t) \xrightarrow{t-1} \Q(t)$, we see that $H_*(\Q(\Z) \otimes_{\zhd} V) \cong 0$.

The definitions of the relative linking pairings can be made purely algebraically using chain complexes, using the corresponding sequences of isomorphisms:
\begin{eqnarray*}\ol{TH_2(\Q[\Z] \otimes_{\zhd} \mathscr{C}(j))} &\xrightarrow{\simeq}& TH^2(\Q[\Z] \otimes_{\zhd} V) \xrightarrow{\simeq} \\ H^1(\Q(\Z)/\Q[\Z] \otimes_{\zhd} V) &\xrightarrow{\simeq}& \Hom_{\Q[\Z]}(H_1(\Q[\Z] \otimes_{\zhd} V),\Q(\Z)/\Q[\Z]); \end{eqnarray*} and
\begin{eqnarray*}
\ol{TH_1(\Q[\Z] \otimes_{\zhd} V)} &\toiso& TH^3(\Q[\Z] \otimes_{\zhd} V) \toiso \\ H^2(\Q(\Z)/\Q[\Z] \otimes_{\zhd} V)
 & \xrightarrow{\simeq}& \Hom_{\Q[\Z]}(H_2(\Q[\Z] \otimes_{\zhd} V),\Q(\Z)/\Q[\Z]).
\end{eqnarray*}
There are also explicit chain level formulae for the pairings $\beta_{rel}$ in a similar vein to that for $\Bl$ in Proposition \ref{Prop:chainlevelBlanchfield}; for us, the important point is that the above maps are indeed isomorphisms.

To prove (ii), we show that in fact $H_2(\Q[\Z] \otimes_{\zhd} \mathscr{C}(j))$ is entirely torsion.
This follows from the long exact sequence of the pair \[\Id_{\Q(\Z)} \otimes j \colon \Q(\Z) \otimes_{\zh} N \to \Q(\Z) \otimes_{\zhd} V.\]
We have the following excerpt:
\[H_2(\Q(\Z) \otimes_{\zhd} V) \to H_2(\Q(\Z) \otimes_{\zhd} \mathscr{C}(j)) \to H_1(\Q(\Z) \otimes_{\zh} N).\]
We have already seen in (i) that $H_2(\Q(\Z) \otimes_{\zhd} V) \cong 0$.  We claim that $$H_1(\Q(\Z) \otimes_{\zh} N) \cong 0,$$ which then implies by exactness that the central module $H_2(\Q(\Z) \otimes_{\zhd} \mathscr{C}(j))$ is also zero.  Then note, since $\Q(\Z)$ is flat over $\Q[\Z]$, that
\[H_2(\Q(\Z) \otimes_{\zhd} \mathscr{C}(j)) \cong \Q(\Z) \otimes_{\Q[\Z]} H_2(\Q[\Z] \otimes_{\zhd} \mathscr{C}(j)).\]  That this last module vanishes means that $H_2(\Q[\Z] \otimes_{\zhd} \mathscr{C}(j))$ is $\Q[\Z]$-torsion.  To see the claim that $H_1(\Q(\Z) \otimes_{\zh} N) \cong 0$, recall that:
\[H_1(\Q[\Z] \otimes_{\zh} N) \cong H_1(\Q[\Z] \otimes_{\zh} Y) \cong \Q \otimes_{\Z} H_1(\Z[\Z] \otimes_{\zh} Y) \cong \Q \otimes_{\Z} H,\]
and that an Alexander module $H$ is $\Z[\Z]$-torsion, so that the $\Q[\Z]$-module $\Q \otimes_{\Z} H$ is $\Q[\Z]$-torsion.  This completes the proof of (ii); and therefore completes the proof of all the points that the chain complex argument for Theorem \ref{thm:COT4.4chainversion} is not directly analogous to the geometric argument in the proof of \cite[Theorem~4.4]{COT}, completing the present proof and therefore also the proof of Proposition \ref{Thm:zeroAC2_goesto_zeroAC1}.
\end{proof}

\section{The Cochran-Orr-Teichner obstruction theory}\label{Chapter:COTobstructionthy}

Before explaining how to extract the \COT obstructions, first we need to define them. In this section we not only define but also repackage the \COT metabelian obstructions, to put them into a single pointed set, which we denote $\mathcal{COT}_{(\C/1.5)}$.  This construction involves taking large disjoint unions over all of the possible choices which are implicit in defining the \COT obstructions.  By contrast, the construction of $\ac2$ is significantly simpler, as well as having the advantage of being a group.

\COT \cite{COT} use their obstruction theory to detect that certain knots are not $(1.5)$- and $(2.5)$-solvable.  In \cite{CochranTeichner} it is shown that certain knots are $(n)$-solvable but not $(n.5)$-solvable for any $n \in \mathbb{N}_0$.  We focus on the $(1.5)$-level obstructions for this exposition.  Following \cite{Let00}, who worked on the metabelian case, \COT define representations of the fundamental group of the zero framed surgery $\rho \colon \pi_1(M_K) \to \G$,
where $\Gamma = \Gamma_1 := \Z \ltimes \Q(t)/\Q[t,t^{-1}],$ their \emph{universally $(1)$-solvable group}.  To define the semi-direct product in $\G$, $n \in \Z$ acts by left multiplication by $t^n$.  The representation:
\[\rho \colon \pi_1(M_K) \to \pi_1(M_K)/\pi_1(M_K)^{(2)} \to \Z \ltimes H_1(M_K;\Q[t,t^{-1}]) \to \Z \ltimes \Q(t)/\Q[t,t^{-1}]\]
is given by: $g \mapsto (n := \phi(g),h := gt^{-\phi(g)}) \mapsto (n,\Bl(p,h)),$
where $\phi \colon \pi_1(M_K) \to \Z$ is the abelianisation homomorphism and $t$ is a preferred meridian in $\pi_1(M_K)$, the pairing $\Bl$ is the Blanchfield form, and $p$ is an element of $H_1(M_K;\Q[t,t^{-1}])$.





Now suppose that there is $(1)$-solution $W$.  As in Theorem \ref{Lemma:COT4.4}, define $$P := \ker(i_* \colon H_1(M_K;\Q[\Z]) \to H_1(W;\Q[\Z])).$$  Then for each $p \in P$, by \cite[Theorem~3.6]{COT}, we have a representation $\wt{\rho} \colon \pi_1(W) \to \G$, which enables us to define the intersection form:
\[\lambda_2 \colon H_2(W;\Q\Gamma) \times H_2(W;\Q\Gamma) \to \Q\Gamma.\]
Since $W$ is a manifold with boundary, this will in general be a singular intersection form.  To define a non-singular form we localise coefficients: \COT use the non-commutative \emph{Ore localisation} to formally invert all the non-zero elements in $\Q\G$ to obtain a skew-field $\K$, as in Definition \ref{Defn:OreLocalisation}; note that $\G$ is a PTFA group, so the Ore localisation exists by \cite[Proposition~2.5]{COT}.



As is proved in \cite[Propositions~2.9,~2.10~and~2.11 and Lemma~2.12]{COT}, the homology of $M_K = \partial W$ vanishes with $\K$ coefficients. Therefore the intersection form on the middle dimensional homology of $W$ becomes non-singular over $\K$, so we have an element in the Witt group of non-singular Hermitian forms over $\K$.  Moreover, using Proposition \ref{Prop:COT2.10chainversion}, control over the size of the $\Z$-homology translates into control over the size of the $\K$-homology of $W$.  To explain how this gives us a well--defined obstruction, which does not depend on the choice of 4-manifold, and how this obstruction lives in a group, we define $L$-groups and the localisation exact sequence in $L$-theory.

\begin{definition}[\cite{Ranicki3} I.3]\label{Defn:Lgroups}
Two $n$--dimensional $\eps$--symmetric Poincar\'{e} finitely generated projective $A$-module chain complexes $(C,\varphi)$ and $(C',\varphi')$ are \emph{cobordant} if there is an $(n+1)$-dimensional $\eps$-symmetric Poincar\'{e} pair:
\[(f,f') \colon C \oplus C' \to D, (\delta\varphi,\varphi \oplus -\varphi').\]
The union operation of \cite[Part~I,~pages~117--9]{Ranicki3} shows that cobordism of chain complexes is a transitive relation.  The equivalence classes of symmetric Poincar\'{e} chain complexes under the cobordism relation form a group $L^n(A,\eps)$, with
\[(C,\varphi) + (C',\varphi') = (C \oplus C',\varphi \oplus \varphi'); \; -(C,\varphi) = (C,-\varphi).\]
As usual if we omit $\eps$ from the notation we assume that $\eps=1$.  In the case $n=0$, $L^0(A)$ coincides with the Witt group of non-singular Hermitian forms over $A$.
\qed \end{definition}

Note that an element of an $L$-group is in particular a symmetric \emph{Poincar\'{e}} chain complex.  This means that the intersection forms of $(1)$-solutions typically give elements of $L^0(\K)$ but not of $L^0(\Q\Gamma)$.

\begin{definition}[\cite{Ranicki2} Chapter 3]\label{defn:localisationexactsequence}
The \emph{Localisation Exact Sequence in $L$-theory} is given, for a ring $A$ without zero divisors and a multiplicative subset $S = A -\{0\}$, which satisfies the Ore condition, as follows:
\[ \cdots \to L^n(A) \to L^n(S^{-1}A) \to L^n(A,S) \to L^{n-1}(A) \to \cdots.\]
The relative $L$-groups $L^n(A,S)$ are defined to be the cobordism classes of $(n-1)$-dimensional symmetric Poincar\'{e} chain complexes over $A$ which become contractible over $S^{-1}A$, where the cobordisms are also required to be contractible over $S^{-1}A$.  For $n=2$ this is equivalent to the Witt group of $S^{-1}A/A$-valued linking forms on $H^1$ of the chain complex.


The first map $L^n(A) \to L^n(S^{-1}A)$ in the localisation sequence is given by considering a chain complex over the ring $A$ as a chain complex over $S^{-1}A$, by tensoring up using the inclusion $A \to S^{-1}A$.  The salient effect of this is that some maps become invertible which previously were not.  We say that a symmetric chain complex is $\K$-Poincar\'{e} if it is Poincar\'{e} after tensoring with $\K$.

The second map $L^n(S^{-1}A) \to L^n(A,S)$ is the boundary construction.  Let $(C_*,\varphi)$ represent an element of $L^n(S^{-1}A)$.  By clearing denominators, there is a chain complex which is chain equivalent to $(C_*,\varphi)$, in which all the maps are given in terms of $A$.  We may therefore assume that we have a symmetric but typically not Poincar\'{e} complex $(C_*,\varphi)$ over $A$, and take the mapping cone $\mathscr{C}(\varphi_0 \colon C^{n-*} \to C_*)$.  This gives, as in Definition \ref{Defn:algThomcomplexandthickening}, an $(n-1)$-dimensional symmetric Poincar\'{e} chain complex over $A$ which becomes contractible over $S^{-1}A$, since $\varphi_0$ is a chain equivalence over $S^{-1}A$, i.e. we have an element of $L^n(A,S)$.

On the level of Witt groups, this map sends a Hermitian $S^{-1}A$-non-singular intersection form over $A$, $(L ,\lambda \colon L \to L^*),$  to the linking form on $\coker (\lambda \colon L \to L^*)$ given by: $(x,y) \mapsto z(x)/s$,
where $x,y \in L^*, z \in L, sy=\lambda(z)$ \cite[pages~242--3]{Ranicki2}.

The third map $L^n(A,S) \to L^{n-1}(A)$ is the forgetful map on the equivalence relation; it forgets the requirement that the cobordisms be contractible over $S^{-1}A$, simply asking for algebraic cobordisms over $A$.
\qed \end{definition}



The obstruction theory of \COTN, for suitable representations $\pi_1(M_K) \to \G$, detects the class of $(C_*(M_K;\Q\Gamma), \backslash\Delta([M_K]))$ in $L^4(\Q\Gamma,S)$, where $S := \Q\G - \{0\}$; we have an invariant of the 3-manifold $M_K$.  The first question we ask, corresponding to $(1)$-solvability, is whether the chain complex of $M_K$ bounds over $\Q\G$.  Suppose that $K$ is a $(1)$-solvable knot.  Then  we have a symmetric Poincar\'{e} complex
$$(C_*(M_K;\Q\Gamma),\backslash\Delta([M_K])) \in \ker(L^4(\Q\G,S) \to L^3(\Q\G)).$$
The obstruction which detects that there is no $\K$-contractible null-cobordism of $C_*(M_K;\Q\G)$ therefore lies in $L^4(\K)/\im(L^4(\Q\G)).$

A $(1)$-solution $W$ defines an element of $L^4(\K)$ by taking the symmetric $\K$-Poincar\'{e} chain complex: \[(C_*(W,M_K;\K) = \K \otimes_{\Q\G} C_*(W,M_K;\Q\G), \backslash\Delta([W,M_K])).\]  The image of $L^4(\Q\G)$ represents the change corresponding to a different choice of $(1)$-solution $W$: the obstruction defined must be independent of this choice.
Since 2 is invertible in the rings $\K$ and $\Q\G$, we can do surgery below the middle dimension \cite[Part~I,~3.3~and~4.3]{Ranicki3} to see that our obstruction lives in
$L^0(\K)/\im(L^0(\Q\G)).$
Taking two choices of 4-manifold $W,W'$ with boundary $M_K$ and gluing to form
$V:= W \cup_{M_K} -W',$
we obtain a 4-manifold whose image in $L^4(\Q\G) \cong L^0(\Q\G)$ gives the difference between the Witt classes of the intersection forms of $W$ and $W'$, showing that the invariant in $L^0(\K)/\im(L^0(\Q\G))$ is well-defined.  If this obstruction does not vanish then $K$ cannot be $(1.5)$-solvable and therefore in particular is not slice.

The main obstruction theorem of \COTN, at the $(1.5)$ level, is the following:

\begin{theorem}\label{Thm:COTmaintheorem}\cite[Theorem~4.2]{COT}
Let $K$ be a knot, and define, for each $p \in H_1(M_K;\Q[\Z])$:
\[B:= (C_*(M_K;\Q\G),\backslash\Delta([M_K])) \in L^4(\Q\G,\Q\G-\{0\}).\]
Suppose that $K$ is $(1)$-solvable via a $(1)$-solution $W$.  Then there exists a metaboliser $P =P^\bot \subseteq H_1(M_K;\Q[\Z])$ such that for all $p \in P$,
\[B \in \ker(L^4(\Q\G,\Q\G-\{0\}) \to L^3(\Q\G)).\]
Suppose that $K$ is $(1.5)$-solvable via a $(1.5)$-solution $W$.  Then there exists a metaboliser $P =P^\bot \subseteq H_1(M_K;\Q[\Z])$ such that for all $p \in P$, $B=0$.
\end{theorem}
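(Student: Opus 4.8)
The plan is to follow the proof of \cite[Theorem~4.2]{COT} closely, assembling ingredients already available: Theorem~\ref{Lemma:COT4.4}, the representation extension result \cite[Theorem~3.6]{COT}, Proposition~\ref{Prop:COT2.10chainversion}, the localisation exact sequence of Definition~\ref{defn:localisationexactsequence}, and surgery below the middle dimension. In both cases I would take, for the given solution $W$, the submodule
\[P := \ker\big(i_* \colon H_1(M_K;\Q[\Z]) \to H_1(W;\Q[\Z])\big),\]
which by Theorem~\ref{Lemma:COT4.4} satisfies $P = P^\bot$ with respect to the rational Blanchfield form; this is the metaboliser claimed in both assertions, and it remains only to verify the $L$-theoretic statements for each $p \in P$. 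For such a $p$, the Blanchfield representation $\rho = \rho_p \colon \pi_1(M_K) \to \G$ makes $B$ a well-defined element of $L^4(\Q\G,\Q\G - \{0\})$: the complex $C_*(M_K;\Q\G)$ is $\Q\G$-Poincar\'e because $M_K$ is a closed oriented $3$-manifold, and it becomes contractible over $\K$ because with these metabelian coefficients the reduced homology of $M_K$ is a localisation of a $\Q[\Z]$-torsion module (\cite[Propositions~2.9--2.11]{COT}).

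For the $(1)$-solvable case: by \cite[Theorem~3.6]{COT}, for $p \in P$ the representation $\rho_p$ extends to $\wt\rho_p \colon \pi_1(W) \to \G$. The resulting symmetric Poincar\'e pair $\big(C_*(M_K;\Q\G) \to C_*(W;\Q\G),\ (\backslash\Delta([W,M_K]),\backslash\Delta([M_K]))\big)$ over $\Q\G$ is an algebraic null-cobordism of $(C_*(M_K;\Q\G),\backslash\Delta([M_K]))$. Applying the third map of the localisation sequence, which simply forgets the $\K$-contractibility requirement on cobordisms, this shows $B \in \ker(L^4(\Q\G,\Q\G-\{0\}) \to L^3(\Q\G))$, which is the first assertion.

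For the $(1.5)$-solvable case: by the previous paragraph $B$ already lies in $\ker(\to L^3(\Q\G))$, so by exactness of $L^4(\Q\G) \to L^4(\K) \to L^4(\Q\G,\Q\G-\{0\}) \to L^3(\Q\G)$ it is the image of the $\K$-Witt class of $W$. As $2$ is invertible in $\Q\G$ and $\K$, surgery below the middle dimension (\cite[Part~I,~3.3~and~4.3]{Ranicki3}) represents that class by the intersection form $\lambda_2^\K$ on $H_2(W;\K)$; this form is non-singular because $H_*(M_K;\K) = 0$ and, by Proposition~\ref{Prop:COT2.10chainversion}, the $\K$-homology of $W$ is controlled by its $\Z$-homology. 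It therefore suffices to show $(H_2(W;\K),\lambda_2^\K)$ is metabolic, for then it vanishes in $L^0(\K)$ and hence $B = 0$. Since $\G$ is metabelian, $\wt\rho_p$ factors through $\pi_1(W)/\pi_1(W)^{(2)}$, so the induced change of rings sends the $(2)$-Lagrangian $L^{(2)} \subseteq H_2(W;\Z[\pi_1(W)/\pi_1(W)^{(2)}])$ to a submodule $\ol{L} \subseteq H_2(W;\K)$ on which $\lambda_2^\K$ vanishes, by naturality of the intersection pairing. Comparing $\dim_\K H_2(W;\K)$ with the rank of $H_2(W;\Z)$ via Proposition~\ref{Prop:COT2.10chainversion}, and using the dual $(1)$-Lagrangian (which remains independent of $\ol{L}$ after localising) to supply a complement, one checks that $\ol{L}$ is half-dimensional and equal to its own annihilator, i.e.\ a metaboliser for $\lambda_2^\K$. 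Hence $B = 0$.

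The hard part, and the only place where $(1.5)$-solvability is genuinely needed beyond $(1)$-solvability, is the final step: showing that $\ol{L}$ is a Lagrangian over $\K$ — half-dimensional and self-annihilating — and not merely isotropic. This rests on a careful dimension count, tracking how the $\Z$-homology ranks of $W$ (and of the relevant relative complexes) bound the corresponding $\K$-dimensions through Proposition~\ref{Prop:COT2.10chainversion}, together with the persistence of the dual $(1)$-Lagrangian under localisation. A more routine point is to confirm that the one metaboliser $P$ simultaneously meets the hypotheses of Theorem~\ref{Lemma:COT4.4} and of \cite[Theorem~3.6]{COT} for every $p \in P$.
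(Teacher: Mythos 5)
Your proposal is correct and follows essentially the same route as the paper, which itself gives only a sketch deferring to \cite[Theorem~4.2]{COT}: the metaboliser $P$ from Theorem~\ref{Lemma:COT4.4}, extension of representations via \cite[Theorem~3.6]{COT} to kill the image in $L^3(\Q\G)$, and then the image of the $(2)$-Lagrangian as a half-rank isotropic summand of $(H_2(W;\K),\lambda_2^\K)$, identified with the obstruction via surgery below the middle dimension. The ``hard part'' you flag is genuine but standard: over the skew-field $\K$ a half-dimensional isotropic subspace is automatically self-annihilating, so only the rank count via Proposition~\ref{Prop:COT2.10chainversion} and the dual $(1)$-Lagrangian remains, exactly as the paper indicates.
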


\begin{proof}
We give a sketch proof.  The fact that a meridian of $K$ maps non--trivially under $\rho$ is sufficient, as in \cite[Section~2]{COT}, to see that $C_*(M_K;\K) \simeq 0$, so that indeed $B \in L^4(\Q\G,\Q\G-\{0\})$.  The $(1)$-solvable condition ensures, by Theorem \ref{Lemma:COT4.4} and \cite[Theorem~3.6]{COT}, that certain representations extend over $\pi_1(W)$, for $(1)$-solutions $W$, so that $B \mapsto 0 \in L^3(\Q\G)$.  If $W$ is also a $(1.5)$-solution, there is a metaboliser for the intersection form on $H_2(W;\K)$: as mentioned above the fact that we have control over the rank of the $\Z$-homology translates into control on the rank of the $\K$-homology.  We have a half-rank summand on which the intersection form vanishes: the intersection form is therefore trivial in the Witt group $L^0(\K)$.  Since $L^4(\K) \cong L^0_S(\K)$ by surgery below the middle dimension, we indeed have $B=0$.
\end{proof}

We now define a pointed set, which is algebraically defined, which we call the \emph{\COT obstruction set}, and denote $(\mathcal{COT}_{(\C/1.5)},U)$.  The above exposition then enables us to define a map of pointed sets $\C/\mathcal{F}_{(1.5)} \to \mathcal{COT}_{(\C/1.5)}$: the \COT obstructions do not necessarily add well, so we are only able to consider pointed sets, requiring that $(1.5)$-solvable knots map to $U$, the marked point of $\mathcal{COT}_{(\C/1.5)}$.  The reason for this definition is that the second order \COT obstructions depend for their definitions on certain choices of the way in which the first order obstructions vanish.  More precisely, for each element $p \in H_1(M_K;\Q[\Z])$ we obtain a different representation $\pi_1(M_K) \to \G$ and therefore, if it is defined, a potentially different obstruction $B$ from Theorem \ref{Thm:COTmaintheorem}.  The following definition gives an algebraic object, $\mathcal{COT}_{(\C/1.5)}$, which encapsulates the choices in a single set.  Our second order algebraic concordance group $\ac2$ gives a single stage obstruction group from which an element of $\mathcal{COT}_{(\C/1.5)}$ can be extracted; for this see Section \ref{Section:extractingCOT_subsection}.  I would like to thank Peter Teichner for pointing out that I ought to make Definition \ref{defn:COTobstructionset_2}.

In the following definition, for intuition, $(N,\theta)$ should be thought of as corresponding to the symmetric Poincar\'{e} chain complex of the zero surgery $M_K$ on a knot in $S^3$, $\G := \Z \ltimes \Q(t)/\Q[t,t^{-1}]$, and $H$ should be thought of as corresponding to $H_1(M_K;\Q[\Z])$.  There is no requirement that $(N,\theta)$ actually is the chain complex associated to a knot: we are working more abstractly.

\begin{definition}\label{defn:COTobstructionset_2}
Let $H$ be a rational Alexander module, that is a $\Q[\Z]$-module such that $H = \Q \otimes_{\Z} H'$ for some $H' \in \mathcal{A}$.  We denote the class of such $H$ by $\Q \otimes_{\Z} \mathcal{A}$.  Let \[\Bl\colon H \times H \to \Q(t)/\Q[t,t^{-1}]\] be a non-singular, sesquilinear, Hermitian pairing, and let $p \in H$.  We define the set:
\[L^4_{H,\Bl,p}(\Q\G,\Q\G-\{0\})\]
to comprise pairs $((N,\theta \in Q^3(N)),\xi)$,
where $(N,\theta)$ is a 3-dimensional symmetric Poincar\'{e} complex over $\Q\G$ which is contractible when tensored with the Ore localisation $\K$ of $\Q\G$:
\[\K \otimes_{\Q\G} N \simeq 0,\]
which satisfies:
\[H_*(\Q \otimes_{\Q\G} N) \cong H_*(S^1 \times S^2 ; \Q);\]
and where $\xi$ is an isomorphism
\[\xi \colon H \xrightarrow{\simeq} H_1(\Q[\Z] \otimes_{\Q\G} N).\]
Using the 3-dimensional symmetric Poincar\'{e} chain complex $(\Q[\Z] \otimes_{\Q\G} N,\Id \otimes \theta)$, we can define the rational Blanchfield form (see Proposition \ref{Prop:chainlevelBlanchfield}):
\[\wt{\Bl} \colon H_1(\Q[\Z] \otimes_{\Q\G} N) \times H_1(\Q[\Z] \otimes_{\Q\G} N) \to \Q(t)/\Q[t,t^{-1}].\]
We require that:
$\Bl(x,y) = \wt{\Bl}(\xi(x),\xi(y))$
for all $x,y \in H$.  In the case that $p=0 \in H$, we have a further condition that:
\begin{equation}\label{Eqn:common_boundary}((N,\theta),\xi)_0 \cong ((\Q[\Z] \otimes_{\Q\G} N,\Id \otimes \theta),\xi) \in L^4_{H,\Bl,0}(\Q\G,\Q\G-\{0\})\end{equation}
We consider the union, for a fixed $H \in \Q \otimes_{\Z} \mathcal{A}$ and a fixed $\Bl \colon H \times H \to \Q(t)/\Q[t,t^{-1}]$:
\[\mathcal{AF}_{(\C/1.5)}(H,\Bl) := \bigsqcup_{p \in H} \, L^4_{H,\Bl,p}(\Q\G,\Q\G-\{0\}),\]
over all $p \in H$.
Next, we consider the union over all possible $H$ and $\Bl$ of a class of certain subsets of $\mathcal{AF}_{(\C/1.5)}(H,\Bl)$, namely the subsets which have one element of $L^4_{H,\Bl,p}(\Q\G,\Q\G-\{0\})$ for each $p \in H$:
\[\bigcup_{\stackrel{H \in \Q \otimes_{\Z} \mathcal{A}}{ \Bl \colon \ol{H} \xrightarrow{\simeq} \Ext_{\Q[\Z]}^1(H,\Q[\Z])} }\, \Big\{ \, \bigsqcup_{p \in H} \,\{((N,\theta),\xi)_{p}\} \subset \mathcal{AF}_{(\C/1.5)}(H,\Bl)\Big\}.\] By defining a partial ordering on this class we can make it into a set by taking an inverse limit.  For each $\Q[\Z]$-module isomorphism $\a \colon H \xrightarrow{\simeq} H^{\%}$, we define a map
\[\a_* \colon L^4_{H,\Bl,p}(\Q\G,\Q\G-\{0\}) \to L^4_{H^{\%},\Bl^{\%},\a(p)}(\Q\G,\Q\G-\{0\}),\]
where $\Bl^{\%}(x,y) := \Bl(\a^{-1}(x),\a^{-1}(y))$ by
\[((N,\theta \in Q^3(N)),\xi) \mapsto ((N,\theta \in Q^3(N)), \xi \circ \a^{-1}).\]
This defines a map:
\[\a_* \colon \mathcal{AF}_{(\C/1.5)}(H,\Bl) \to \mathcal{AF}_{(\C/1.5)}(H^{\%},\Bl^{\%}),\]
which we use to map subsets to subsets.  We say that a subset:
\[\bigsqcup_{p \in H} \,\{((N,\theta),\xi)_{p}\} \subset \mathcal{AF}_{(\C/1.5)}(H,\Bl),\]
is less than or equal to
\[\bigsqcup_{q \in H^\%} \,\{((N,\theta),\xi^{\%})_{q}\} \subset \mathcal{AF}_{(\C/1.5)}(H^{\%},\Bl^{\%}),\]
if the latter is the image of the former under $\a_*$.  We then define:
\begin{eqnarray*}\mathcal{AF}_{(\C/1.5)} := \underleftarrow{\lim} \bigg\{\bigsqcup_{p \in H} \,\{((N,\theta),\xi)_{p}\} \subset \mathcal{AF}_{(\C/1.5)}(H,\Bl)\, | \, H \in \Q \otimes_{\Z} \mathcal{A},\\ \Bl \colon \ol{H} \xrightarrow{\simeq} \Ext_{\Q[\Z]}^1(H,\Q[\Z])\bigg\}.\end{eqnarray*}

Finally, we must say what it means for two elements of $\mathcal{AF}_{(\C/1.5)}$ to be equivalent, in such a way that isotopic and concordant knots map to equivalent elements of $\mathcal{AF}_{(\C/1.5)}$, and we must define the class of the zero object, so that we have a pointed set.

The distinguished point is the equivalence class of the 3-dimensional symmetric Poincar\'{e} chain complex: \begin{multline*} U := \Big(\big(\Q\G \otimes_{\Q[\Z]} C_*(S^1 \times S^2; \Q[\Z]),\backslash\Delta([S^1 \times S^2])\big), \xi = \Id \colon \{0\} \to \{0\} \Big) \\ \in \mathcal{AF}_{(\C/1.5)}(\{0\},\Bl_{\{0\}}).\end{multline*}
We declare two elements of $\mathcal{AF}_{(\C/1.5)}$ to be equivalent, denoted $\sim$, if we can choose a representative class for the inverse limit construction of each i.e. pick representatives:
\[\bigsqcup_{p \in H} \,\{((N,\theta),\xi)_{p}\} \subset \mathcal{AF}_{(\C/1.5)}(H,\Bl) \text{ and }\bigsqcup_{q \in H^\dag} \,\{((N^{\dag},\theta^{\dag}),\xi^{\dag})_{q}\} \subset \mathcal{AF}_{(\C/1.5)}(H^{\dag},\Bl^{\dag})\]
for some $H,H^{\dag} \in \Q \otimes_{\Z} \mathcal{A}$, such that there is a metaboliser $P \subseteq H \oplus H^\dag$ of
\[\Bl \oplus - \Bl^\dag \colon H \oplus H^\dag \times H \oplus H^\dag \to \Q(\Z)/\Q[\Z]\]
for which all the elements of $L^4(\Q\G,\Q\G -\{0\})$ in the disjoint union:
\[\bigsqcup_{(p,q) \in P}\, \{((N_p \oplus N_q^{\dag},\theta_p \oplus -\theta^{\dag}_q),\xi_p \oplus \xi_q^{\dag})\} \subset \mathcal{AF}_{(\C/1.5)}(H\oplus H^{\dag},\Bl \oplus -\Bl^\dag)\]
bound a 4-dimensional symmetric Poincar\'{e} pair
\[(j_p \oplus j_q^{\dag} \colon N_p \oplus N_q^{\dag} \to V_{(p,q)},(\delta \theta_{(p,q)}, \theta_p \oplus -\theta^{\dag}_q) \in Q^4(j_p \oplus j_q^{\dag}))\]
over $\Q\G$ such that
$$H_1(\Q \otimes_{\Q\G} N_p) \toiso H_1(\Q \otimes_{\Q\G} V_{(p,q)}) \xleftarrow{\simeq} H_1(\Q \otimes_{\Q\G} N_q^\dag),$$
such that the isomorphism
\[\xi_p \oplus \xi_q^\dag \colon H \oplus H^\dag \toiso H_1(\Q[\Z] \otimes_{\Q\G} N_p) \oplus H_1(\Q[\Z] \otimes_{\Q\G} N_q^\dag)\]
restricts to an isomorphism
\[P \toiso \ker \big(H_1(\Q[\Z] \otimes_{\Q\G} N_p) \oplus H_1(\Q[\Z] \otimes_{\Q\G} N_q^\dag) \to H_1(\Q[\Z] \otimes_{\Q\G} V_{(p,q)})\big),\]
and such that the algebraic Thom complex (Definition \ref{Defn:algThomcomplexandthickening}), taken over the Ore localisation, is algebraically null-cobordant in $L^4_S(\K) \cong L^0_S(\K)$:
\[[(\K \otimes_{\Q\G} \mathscr{C}((j_p \oplus j_q^\dag)),\Id \otimes \delta \theta_{(p,q)}/(\theta_{p}\oplus-\theta^\dag_q))] = [0] \in L^4_S(\K).\]
The relation $\sim$ is an equivalence relation: see Proposition \ref{prop:COTobstr_equiv_rln}.

Taking the quotient of $\mathcal{AF}_{(\C/1.5)}$ by this equivalence relation defines the second order \COT obstruction pointed set $(\mathcal{COT}_{(\C/1.5)},U)$: there is a well--defined map from concordance classes of knots modulo $(1.5)$-solvable knots to this set, which maps $(1.5)$-solvable knots to the equivalence class of $U$, as follows.

Define $H:= H_1(M_K;\Q[\Z])$.  For each $p\in H$, we use the corresponding representation $\rho \colon \pi_1(M_K) \to \G$ to form the complex:
$$((N,\theta),\xi)_{p} := ((\Q\G \otimes_{\Z[\pi_1(M_K)]} C_*(M_K;\Z[\pi_1(M_K)]), \backslash \Delta ([M_K])),\xi)  \in L^4_{H,\Bl,p}(\Q\G,\Q\G - \{0\}).$$
This gives a well--defined map: see Proposition \ref{Prop:COTobstr_well_defined}.  This completes our description of the \COT pointed set.
\qed\end{definition}

\begin{proposition}\label{prop:COTobstr_equiv_rln}
The relation $\sim$ of Definition \ref{defn:COTobstructionset_2} is indeed an equivalence relation.
\end{proposition}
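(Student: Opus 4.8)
The plan is to verify the three properties of an equivalence relation—reflexivity, symmetry, transitivity—for the relation $\sim$ on $\mathcal{AF}_{(\C/1.5)}$, following the template already used for Proposition~\ref{prop:equivrelation} but now in the localised $L$-theoretic setting.

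First, for \textbf{reflexivity}: given a representative $\bigsqcup_{p \in H}\{((N,\theta),\xi)_p\}$, I would take the product cobordism provided by Lemma~\ref{lemma:productcobordism}. For each $p$, the identity chain equivalence $(N_p,\theta_p) \to (N_p,\theta_p)$ gives a symmetric Poincar\'{e} pair $((\Id,\Id)\colon N_p \oplus N_p \to N_p,(0,\theta_p \oplus -\theta_p))$, which is also $\K$-Poincar\'{e} and whose algebraic Thom complex is contractible, hence zero in $L^4_S(\K)$. One takes the diagonal metaboliser $P = \{(h,h) : h \in H\} \subseteq H \oplus H$ of $\Bl \oplus -\Bl$; the homological and consistency conditions hold because $j = \Id$ on each summand. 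One also needs reflexivity to incorporate the equivalence of Definition~\ref{Defn:algebraicsetofchaincomplexes}-type data (different choices of $H$, $\xi$), exactly as in the first part of the proof of Proposition~\ref{prop:equivrelation}; an isomorphism $\omega\colon H \to H^\%$ intertwining the $\xi$'s and the Blanchfield forms yields the required metaboliser as the graph of $\omega$.

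\textbf{Symmetry} is immediate: if $P \subseteq H \oplus H^\dag$ is a metaboliser for $\Bl \oplus -\Bl^\dag$ witnessing $\sim$, then the image of $P$ under the swap isomorphism $H \oplus H^\dag \to H^\dag \oplus H$ is a metaboliser for $\Bl^\dag \oplus -\Bl$; negating the symmetric structures of the cobordisms $V_{(p,q)}$ and using that $L^4_S(\K)$ is a group (so $[0] = [-0]$) gives the witnessing data in the other direction.

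\textbf{Transitivity} is the main obstacle, and it is handled by gluing, in close parallel with the transitivity argument of Proposition~\ref{prop:equivrelation}. Suppose $P \subseteq H \oplus H^\dag$ witnesses the first equivalence and $Q \subseteq H^\dag \oplus H^\ddag$ the second. One forms the composed relation $R \subseteq H \oplus H^\ddag$ exactly as in Lemma~\ref{Lemma:Cancellation_Blanchfield}, namely $R := \{(h,h'') : \exists\, h^\dag \text{ with } (h,h^\dag) \in P,\ (h^\dag,h'') \in Q\}$ (composition of metabolic correspondences); Lemma~\ref{Lemma:Cancellation_Blanchfield} is precisely what guarantees $R = R^\bot$ for $\Bl \oplus -\Bl^\ddag$. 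For each $(h,h'') \in R$ pick a connecting $h^\dag$; gluing the $4$-dimensional symmetric Poincar\'{e} pairs $V_{(h,h^\dag)}$ and $V_{(h^\dag,h'')}$ along $N^\dag_{h^\dag}$ via the union construction of \cite[Part~I, pages~117--9]{Ranicki3} produces a pair over $\Q\G$ bounding $N_h \oplus N^\ddag_{h''}$. The homological condition on $\Z$-coefficients follows from a Mayer--Vietoris argument (the glued-in $N^\dag$ is a rational homology $S^1 \times S^2$), the restriction-of-$\xi$ condition follows from tracking kernels through the Mayer--Vietoris sequence just as in the consistency diagram chase at the end of Proposition~\ref{prop:equivrelation}, and the vanishing in $L^4_S(\K)$ follows because the glued algebraic Thom complex is the union of two null-cobordant ones, hence null-cobordant, using that cobordism in $L^4_S(\K)$ is transitive via the same union construction. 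The one genuine subtlety is that the gluing requires all complexes to be defined over a common group ring; since here $\G = \Z \ltimes \Q(t)/\Q[t,t^{-1}]$ is fixed throughout (unlike the varying $\Z \ltimes H$ of Section~\ref{Chapter:algconcordance}), this is automatic, and the only bookkeeping is the choice of the intermediate $h^\dag$, which is handled uniformly by passing to the pullback $R$ as above. This completes the verification that $\sim$ is an equivalence relation.
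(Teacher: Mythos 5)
Your proof is correct and follows essentially the same route as the paper's: the diagonal metaboliser with the trivial (product) cobordism for reflexivity, the swap for symmetry, and for transitivity the composed correspondence $R$ justified by Lemma \ref{Lemma:Cancellation_Blanchfield}, gluing the cobordisms along the $\K$-contractible $N^\dag_q$ so that the $L^4_S(\K)$ classes add, with Mayer--Vietoris supplying the homological and consistency conditions.
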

\begin{proof}
To see reflexivity, note that the diagonal $H \subseteq H \oplus H$ is a metaboliser for $\Bl \oplus - \Bl$.  Then take $V_{(p,p)} := N_p$ and $\delta\theta_{(p,p)} := 0$.  It is straight--forward to see that $\sim$ is symmetric.  For transitivity, suppose that
\[\bigsqcup_{p \in H} \,((N,\theta),\xi)_{p} \sim \bigsqcup_{q \in H^\dag} \,((N^\dag,\theta^\dag),\xi^\dag)_{q}\]
with a metaboliser $P \subseteq H \oplus H^\dag$ and chain complexes $(V_{(p,q)},\delta\theta_{(p,q)})$, and that
\[\bigsqcup_{q \in H^\dag} \,((N^\dag,\theta^\dag),\xi^\dag)_{q} \sim \bigsqcup_{r \in H^\ddag} \,((N^\ddag,\theta^\ddag),\xi^\ddag)_{r}.\]
with a metaboliser $Q \subseteq H^\dag \oplus H^\ddag$ and chain complexes $(\ol{V}_{(q,r)},\ol{\delta\theta}_{(q,r)})$.

We define the metaboliser $R \subseteq H \oplus H^\ddag$ by
\[R:= \{(p,r) \in H \oplus H^\ddag \, | \, \exists \, q \in H^\dag \text{ with } (p,q) \in P \text{ and } (q,r) \in Q\}.\]
The proof of Lemma \ref{Lemma:Cancellation_Blanchfield} shows that this is a metaboliser.
For each $(p,r) \in R$ we can therefore choose a suitable $q$ and so glue the chain complexes:
\[(\ol{\ol{V}}_{(p,r)},\ol{\ol{\delta\theta}}_{(p,r)}) := (V_{(p,q)} \cup_{N_q^\dag} \ol{V}_{(q,r)}, \delta\theta_{(p,q)} \cup_{\theta_q^\dag} \ol{\delta\theta}_{(q,r)}),\]
to create an algebraic cobordism for each $(p,r) \in R$.  Easy Mayer-Vietoris arguments show that the inclusions $N_p \to \ol{\ol{V}}_{(p,r)}$ and $N_r^\ddag \to \ol{\ol{V}}_{(p,r)}$ induce isomorphisms on first $\Q$-homology, and that
\[\xi_p \oplus \xi_r^\ddag \colon H \oplus H^\ddag \toiso H_1(\Q[\Z] \otimes_{\Q\G} N_p) \oplus H_1(\Q[\Z] \otimes_{\Q\G} N_r^\ddag)\]
restricts to an isomorphism
\[R \toiso \ker \big(H_1(\Q[\Z] \otimes_{\Q\G} N_p) \oplus H_1(\Q[\Z] \otimes_{\Q\G} N_r^\ddag) \to H_1(\Q[\Z] \otimes_{\Q\G} \ol{\ol{V}}_{(p,r)})\big).\]  Since $\K \otimes_{\Q\G} N_q^\dag \simeq 0$, the elements of $L^4_S(\K)$ add and we still have the zero element of $L^4_S(\K)$ as required.
\end{proof}

\begin{proposition}\label{Prop:COTobstr_well_defined}
The map $\C/\mathcal{F}_{(1.5)} \to \mc{COT}_{(\C/1.5)}$ in Definition \ref{defn:COTobstructionset_2} is well--defined.
\end{proposition}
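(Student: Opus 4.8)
The plan is to verify, in turn, that the recipe of Definition~\ref{defn:COTobstructionset_2} (a) produces, for each $p$, a genuine element of $L^4_{H,\Bl,p}(\Q\G,\Q\G-\{0\})$; (b) assembles, over all $p$ and all isomorphic copies of $H$, a genuine point of $\mathcal{AF}_{(\C/1.5)}$; and (c) sends knots agreeing in $\C/\mathcal{F}_{(1.5)}$ to $\sim$-equivalent elements, the unknot going to $U$. For (a), write $\rho_p\colon\pi_1(M_K)\to\G$ for the representation attached to $p\in H=H_1(M_K;\Q[\Z])$ and $(N,\theta)_p=(\Q\G\otimes_{\Z[\pi_1(M_K)]}C_*(M_K;\Z[\pi_1(M_K)]),\backslash\Delta([M_K]))$. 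Since a meridian of $K$ has infinite order in $\G$, the argument of \cite[\S2]{COT} (compare the use of Proposition~\ref{Prop:COT2.10chainversion}) gives $\K\otimes_{\Q\G}N\simeq0$; zero surgery is a rational homology $S^1\times S^2$, so $H_*(\Q\otimes_{\Q\G}N)\cong H_*(S^1\times S^2;\Q)$; the Hurewicz map supplies $\xi\colon H\toiso H_1(\Q[\Z]\otimes_{\Q\G}N)$; and by Proposition~\ref{Prop:chainlevelBlanchfield} the chain-level pairing of $(\Q[\Z]\otimes_{\Q\G}N,\Id\otimes\theta)$ is, under $\xi$, the Blanchfield form $\Bl$ of $K$, so $\Bl(x,y)=\wt{\Bl}(\xi(x),\xi(y))$ holds automatically. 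When $p=0$ the representation $\rho_0$ factors through $\phi\colon\pi_1(M_K)\to\Z$, whence $\Q\G\otimes_{\Z[\pi_1(M_K)]}C_*(M_K;\Z[\pi_1(M_K)])\cong\Q\G\otimes_{\Q[\Z]}C_*(M_K;\Q[\Z])$, which is exactly condition~(\ref{Eqn:common_boundary}). Independence of the chosen handle decomposition and diagonal approximation follows as in Proposition~\ref{prop: fundtriaddefinesanelement}. For (b), the family $\{((N,\theta),\xi)_p\}_{p\in H}$ is by construction a subset of $\mathcal{AF}_{(\C/1.5)}(H,\Bl)$ with exactly one element per $p$, and for any $\Q[\Z]$-isomorphism $\a\colon H\to H^{\%}$ the map $\a_*$ carries this subset to the corresponding subset built from the same (hence $\G$-equivariantly chain-isomorphic) data precomposed with $\a^{-1}$; thus the family is compatible under the inverse system and defines a point of $\mathcal{AF}_{(\C/1.5)}$.

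The heart of the argument is (c). Since $\mathcal{F}_{(1.5)}$ is a subgroup, $[K]=[K^\dag]$ in $\C/\mathcal{F}_{(1.5)}$ precisely when $J:=K\,\sharp\,-K^\dag$ is $(1.5)$-solvable; fix a $(1.5)$-solution $W$ with $\partial W=M_J$. I would build the $4$-dimensional pairs required by $\sim$ by gluing $W$ to the standard connected-sum cobordism $P_0$ (with $\partial P_0=M_K\sqcup-M_{K^\dag}\sqcup-M_J$, the same cobordism underlying the homomorphism $\C\to\ac2$) along its $M_J$-boundary component, obtaining $W'$ with $\partial W'=M_K\sqcup-M_{K^\dag}$; this is the $\mathcal{COT}_{(\C/1.5)}$-level analogue of the cobordism used for Proposition~\ref{prop:conc_implies_2ndorderconc}. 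Set $P:=\ker\big(H_1(M_K;\Q[\Z])\oplus H_1(M_{K^\dag};\Q[\Z])\to H_1(W';\Q[\Z])\big)$. Under the Mayer--Vietoris isomorphism $H_1(M_K;\Q[\Z])\oplus H_1(M_{K^\dag};\Q[\Z])\cong H_1(M_J;\Q[\Z])$ (note $P_0$ creates no new first homology, the extra $1$-handle being cancelled by a dual surface exactly as in the discussion preceding Theorem~\ref{Thm:1.5solvable=>2nd_alg_slice}), $P$ corresponds to $\ker(H_1(M_J;\Q[\Z])\to H_1(W;\Q[\Z]))$, which by Theorem~\ref{Lemma:COT4.4} in its chain form Theorem~\ref{thm:COT4.4chainversion} is a metaboliser for the rational Blanchfield form of $J$, hence for $\Bl_K\oplus-\Bl_{K^\dag}$. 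Further Mayer--Vietoris computations identify $P$ with $\ker\big(H_1(\Q[\Z]\otimes_{\Q\G}N_p)\oplus H_1(\Q[\Z]\otimes_{\Q\G}N_q^\dag)\to H_1(\Q[\Z]\otimes_{\Q\G}V_{(p,q)})\big)$ and verify the $\Q$-homology constraints on $W'$. For $(p,q)\in P$ the corresponding representation of $\pi_1(M_K)$ extends over $\pi_1(W')$ by \cite[Theorem~3.6]{COT} (as $p$ lies in the Blanchfield metaboliser), so the intersection form of $W'$ over $\K$ is defined; since $W$ is a $(1.5)$-solution it is Witt-trivial by the argument of Theorem~\ref{Thm:COTmaintheorem}, and combining this with $\K\otimes_{\Q\G}N_q^\dag\simeq0$ and the $\K$-acyclicity of $P_0$ gives $[\K\otimes_{\Q\G}\mathscr{C}(j_p\oplus j_q^\dag),\Id\otimes\delta\theta_{(p,q)}/(\theta_p\oplus-\theta_q^\dag)]=[0]\in L^4_S(\K)$, as $\sim$ demands. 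Taking $K^\dag$ the unknot recovers the asserted fact that $(1.5)$-solvable knots map to the class of $U$, and transitivity of $\sim$ (Proposition~\ref{prop:COTobstr_equiv_rln}) then makes the assignment a genuine function on $\C/\mathcal{F}_{(1.5)}$.

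The main obstacle is the bookkeeping in (c): matching the metaboliser $P$ of $\Bl_K\oplus-\Bl_{K^\dag}$ with the Blanchfield metaboliser of $J$, checking that $W'$ meets the rational first-homology conditions of $\sim$, and confirming that the localised algebraic Thom complex is null-cobordant in $L^4_S(\K)$. None of this is new mathematics — it is a repackaging of \cite[Theorems~4.2~and~4.4]{COT}, available here as Theorems~\ref{Thm:COTmaintheorem} and~\ref{thm:COT4.4chainversion}, together with a Mayer--Vietoris analysis of the connected-sum cobordism $P_0$ — but it must be carried out consistently with the many choices encoded in the inverse-limit construction of $\mathcal{AF}_{(\C/1.5)}$.
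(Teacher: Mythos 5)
Your proposal is correct and follows essentially the same route as the paper: reduce to showing that a $(1.5)$-solution $W$ of $K\,\sharp\,-K^\dag$ produces the data required by $\sim$, take $P$ to be the kernel on first $\Q[\Z]$-homology (a metaboliser by the chain-level version of \cite[Theorem~4.4]{COT}), and convert the boundary $M_{K\,\sharp\,-K^\dag}$ into $M_K\sqcup -M_{K^\dag}$ by gluing on a $\K$-acyclic standard piece — the paper phrases this as attaching $C_*(S^1\times D^2\times I;\Q\G)$ along $C_*(S^1\times S^1\times I;\Q\G)$ to the Poincar\'e thickening of $C_*(W,M_{K\,\sharp\,-K^\dag};\Q\G)$, which is the same operation as your cobordism $P_0$. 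Your explicit checks in parts (a) and (b) are left implicit in the paper's proof but are consistent with it.
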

\begin{proof}
To see that the map is well--defined, we show that if $K \, \sharp \, -K^\dag$ is $(1.5)$-solvable, then the image of $K$ is equivalent to the image of ${K^\dag}$ in $\mc{COT}_{(\C/1.5)}$.  Let $W$ be a $(1.5)$-solution for $K \, \sharp \, -K^\dag$, and let $$P := \ker(H_1(M_K;\Q[\Z]) \oplus H_1(M_{K^\dag};\Q[\Z]) \to H_1(W;\Q[\Z])),$$
noting that $$H_1(M_K;\Q[\Z]) \oplus H_1(M_{K^\dag};\Q[\Z]) \toiso H_1(M_{K \, \sharp \, -K^\dag};\Q[\Z]).$$
We define, for all $(p,q) \in P$, $V_{(p,q)} := C_*(W,M_{K \, \sharp \, -K^\dag};\Q\G)$ to be the chain complex of $W$ relative to $M_{K \, \sharp \, -K^\dag}$.

Then $\K \otimes_{\Q\G} V_{(p,q)}$ represents an element of $L^4_S(\K)$ as in Definition \ref{defn:localisationexactsequence}.  Since $W$ is a $(1.5)$-solution, as in Theorem \ref{Thm:COTmaintheorem}, we have $B=0$.  That is, the intersection form of $V_{(p,q)}$ is hyperbolic as required.

Applying the algebraic Poincar\'{e} thickening (Definition \ref{Defn:algThomcomplexandthickening}) yields a symmetric Poincar\'{e} pair
$C_*(M_{K \, \sharp \, -K^\dag};\Q\G)_{(p,q)} \to V^{4-*}_{(p,q)}.$
Now note that $$C_*(M_{K \, \sharp \, -K^\dag};\Q\G)_{(p,q)} \simeq C_*(X_K \cup S^1 \times S^1 \times I \cup X_{K^\dag};\Q\G)_{(p,q)}.$$
By gluing the chain complex $C_*(S^1 \times D^2 \times I;\Q\G)$ to $V_{(p,q)}^{4-*}$ along $C_*(S^1 \times S^1 \times I;\Q\G)$, we obtain a symmetric Poincar\'{e} pair \[(C_*(M_K;\Q\G)_{p} \oplus C_*(M_{K^\dag};\Q\G)_q \to \widehat{V}_{(p,q)},(\widehat{\delta\theta}_{(p,q)},\theta_p \oplus -\theta_q^\dag)).\]
This gluing does not change the element of $L^4_S(\K)$ produced, since $C_*(S^1 \times D^2 \times I;\K) \simeq 0.$  We therefore indeed have that $K$ and $K^\dag$ map to equivalent elements in $\mc{COT}_{(\C/1.5)}$, as claimed.
\end{proof}

\section[Extracting the Cochran-Orr-Teichner obstructions]{Extracting the Cochran-Orr-Teichner Concordance Obstructions}\label{Chapter:extractingCOTobstructions}\label{Section:extractingCOT_subsection}

In this section we define a map $\mathcal{AC}_2 \to \mc{COT}_{(\C/1.5)}$ and show that it is a morphism of pointed sets.  Recall that $\G := \Z \ltimes \Q(t)/\Q[t,t^{-1}]$.  A map $\C/\mathcal{F}_{(1.5)} \to \mc{COT}_{(\C/1.5)}$ was implicitly defined in Section \ref{Chapter:COTobstructionthy}.  We will prove the following theorem:

\begin{theorem}\label{Thm:zeroAC2_goes_to_zeroL4QG}
A triple in $\mathcal{AC}_2$ which is second order algebraically concordant to the triple of the unknot has zero \COT metabelian obstruction; i.e. it maps to $U$ in $\mc{COT}_{(\C/1.5)}$.  See Theorem \ref{Thm:betterstatementzeroAC2_to_zeroL4QG} for a more general and precise statement.
\end{theorem}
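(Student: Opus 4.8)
The plan is to follow the geometric template of Theorem~\ref{Thm:COTmaintheorem}, but executed entirely on the symmetric Poincar\'e chain complex level, using the algebraic $(1.5)$-solution $V$ supplied by Definition~\ref{defn:2ndorderconcordant} in place of the geometric $(1.5)$-solution $W$. Given a triple $(H,\Y,\xi) \in \ac2$ which is second order algebraically concordant to $(\{0\},\Y^U,\Id)$, I would first form the symmetric Poincar\'e complex $(N,\theta)$ over $\zh$ as in Section~\ref{Chapter:extracting_1st_order_obstructions}, so that $\Q[\Z]\otimes N$ carries the rational Blanchfield form $\Bl$ on $H_1(\Q[\Z]\otimes N)$ via Proposition~\ref{Prop:chainlevelBlanchfield}, and I would identify the metaboliser $P := \ker(j_* \colon H_1(\Q[\Z]\otimes_{\zh} N) \to H_1(\Q[\Z]\otimes_{\zhd} V))$ guaranteed by Theorem~\ref{thm:COT4.4chainversion}. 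For each $p \in H_1(\Q[\Z]\otimes N)$ I would need a representation $\rho_p \colon \zh \to \Q\G$, defined algebraically by $g \mapsto (\phi(g), \wt{\Bl}(p, gt^{-\phi(g)}))$ exactly as in the geometric case, and then form $N_p := \Q\G \otimes_{\zh,\rho_p} N$ together with the induced symmetric structure; this defines the image of $(H,\Y,\xi)$ in $\mathcal{COT}_{(\C/1.5)}$.

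The heart of the argument is to show that for $p \in P$, the coefficient system $\rho_p$ extends over $V$, i.e. there is a factorisation $\zhd \to \Q\G$ compatible with $\rho_p$. This is the chain-complex analogue of \cite[Theorem~3.6]{COT}: because $p$ lies in the kernel of $j_*$, the Blanchfield-form datum needed to build $\rho_p$ propagates to $V$. Once this is established, I would tensor the 4-dimensional symmetric Poincar\'e pair $(j \colon \zhd\otimes N \to V, (\Theta,\theta))$ up to $\Q\G$ to get a symmetric Poincar\'e pair $(j_p \colon N_p \to V_p, (\delta\theta_p, \theta_p))$. The homological conditions of Definition~\ref{defn:2ndorderconcordant} — that $j$ induces $\Z$-homology (hence $\Q$-homology) isomorphisms and that $\xi$ identifies $H'$ with $H_1(\Q[\Z]\otimes V)$ — translate directly into the conditions required in the $\sim$ relation of Definition~\ref{defn:COTobstructionset_2}: the first $\Q$-homology is preserved, and $\xi_p$ restricts to an isomorphism from the appropriate kernel. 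The key point here, as in the proof of Theorem~\ref{thm:COT4.4chainversion}, is that $\Q(\Z)\otimes_{\zhd} V \simeq 0$, which one proves by applying Proposition~\ref{Prop:COT2.10chainversion} to $\mathscr{C}(j \circ f_-)$ and using that $\Q(\Z)\otimes_{\zh} D_-$ is contractible; the analogous statement $\K \otimes_{\Q\G} N_p \simeq 0$ then follows from the fact that a meridian maps non-trivially under $\rho_p$, which is exactly the $p$-independent part of the data.

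Finally I would address the $L^4_S(\K)$-triviality. Taking the algebraic Thom complex $\K \otimes_{\Q\G}\mathscr{C}(j_p)$, I must show its class in $L^4_S(\K) \cong L^0_S(\K)$ is zero. The algebraic $(1.5)$-solvability condition — that one of the $(1)$-Lagrangians is the image of a $(2)$-Lagrangian, which in Definition~\ref{defn:2ndorderconcordant} is encoded by the existence of the 4-dimensional \emph{Poincar\'e} triad rather than merely a symmetric complex one must surger — means precisely that the relevant intersection form over $\K$ admits a half-rank summand on which it vanishes, so it is hyperbolic and hence zero in the Witt group $L^0(\K)$. Here I would invoke surgery below the middle dimension (\cite[Part~I,~3.3~and~4.3]{Ranicki3}), valid since $2$ is invertible in $\K$, to reduce $L^4_S(\K)$ to $L^0_S(\K)$. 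The main obstacle I anticipate is the chain-level proof that the representation $\rho_p$ extends over $V$ for $p \in P$: the geometric proof in \cite{COT} uses $\pi_1$ and the actual 4-manifold, and replacing this with a statement purely about the chain complex $V$ over $\zhd$ requires care in relating the kernel $P$ of $j_*$ on $\Q[\Z]$-homology to the existence of the factorisation of coefficient systems — this is where Theorem~\ref{thm:COT4.4chainversion} and the non-singularity of the relative linking pairings must be used, and it is the step where the precise formulation of the consistency condition in Definition~\ref{defn:2ndorderconcordant} earns its keep.
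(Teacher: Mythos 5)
Your overall strategy coincides with the paper's: form $(N,\theta)$, obtain the metaboliser $P$ from Theorem~\ref{thm:COT4.4chainversion}, extend the representations $\rho_p$ for $p \in P$ over the algebraic solution via the chain-level version of \cite[Theorem~3.6]{COT}, tensor the 4-dimensional data up to $\Q\G$, and reduce to $L^0_S(\K)$ by surgery below the middle dimension. One presentational point: the data of Definition~\ref{defn:2ndorderconcordant} is a triad, not a pair with boundary $N \oplus N^\dag$; before you have a null-bordism of $N_p \oplus N_q^\dag$ you must glue the complex of $S^1 \times D^2 \times I$ onto $V$ along $E$ to form $\widehat{V} = V \cup_E Y^U$, which is what the paper does.

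The step that would fail as you describe it is the last one. You propose to kill the class in $L^0_S(\K)$ by exhibiting a half-rank summand on which the $\K$-intersection form vanishes, attributing this Lagrangian to ``the algebraic $(1.5)$-solvability condition.'' But Definition~\ref{defn:2ndorderconcordant} contains no Lagrangian data: the $(2)$-Lagrangian of a geometric $(1.5)$-solution has already been consumed in Theorem~\ref{Thm:1.5solvable=>2nd_alg_slice}, where it served as surgery data to replace $C_*(W)$ by a complex that is a $\Z$-homology circle. All that survives into the algebraic definition is the homological condition that $j$ and $j^\dag$ induce $\Z$- (hence $\Q$-) homology isomorphisms, so there is nothing from which to extract a half-rank summand of a nonzero form. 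The correct conclusion is stronger: applying Proposition~\ref{Prop:COT2.10chainversion} to $\mathscr{C}(i)$, where $i \colon N_p \to \widehat{V}_{(p,q)}$, the $\Q$-homology isomorphisms give $H_*(\Q \otimes \mathscr{C}(i)) \cong 0$ and hence $H_*(\K \otimes \mathscr{C}(i)) \cong 0$; combined with $H_*(\K \otimes N_q^\dag) \cong 0$ this forces $H_*(\K \otimes \mathscr{C}((i,i^\dag))) \cong 0$ in every degree, so the middle-dimensional form lives on the zero module and is trivially zero in $L^0_S(\K)$. You have all the ingredients for this (you invoke Proposition~\ref{Prop:COT2.10chainversion} and the homology isomorphism conditions), but you apply them only to establish $\K$-contractibility of $N_p$ rather than of the relative complex, which is where they are actually needed.
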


We can summarise the results of this section in the following diagram:
\[\xymatrix{
\C/\mathcal{F}_{(1.5)} \ar @{-->} [dr] \ar[r] & \ac2 \ar @{-->} [d]  \\
 & \mathcal{COT}_{(\C/1.5)}.
}\]
Recall that we use dotted arrows for morphisms of pointed sets.

To define the map $\ac2 \to \mc{COT}_{(\C/1.5)}$, as in Section \ref{Chapter:extracting_1st_order_obstructions}, we begin by taking an element $(H,\Y,\xi) \in \ac2$, and forming the algebraic equivalent of the zero surgery $M_K$.  We construct the symmetric \emph{Poincar\'{e}} complex:
\[(N,\theta) := ((Y \oplus (\zh \otimes_{\Z[\Z]} Y^U)) \cup_{E \oplus (\zh \otimes_{\Z[\Z]} E^U)} E, (\Phi \oplus 0) \cup_{\phi \oplus -\phi^U} 0).\]
By defining representations $\Z \ltimes H \to \G$, we will obtain elements of $L^4(\Q\G,\Q\G-\{0\})$.  Recall that $L^4(\Q\G,\Q\G-\{0\})$ is the group of 3-dimensional symmetric Poincar\'{e} chain complexes over $\Q\G$ which become contractible when we tensor over the Ore localisation (Definition \ref{Defn:OreLocalisation}) $\K$ of $\Q\G$ with respect to $\Q\G - \{0\}$.  The group $L^4(\Q\G,\Q\G-\{0\})$ fits into the localisation exact sequence:
\[L^4(\Q\G) \to L^4(\K) \to L^4(\Q\G,\Q\G-\{0\}) \to L^3(\Q\G).\]
The reduced $L^{(2)}$-signature (\cite[Section~5]{COT}) obstruct the vanishing of an element of $L^0(\K)/\im(L^0(\Q\G))$.  After the proof of Theorem \ref{Thm:zeroAC2_goes_to_zeroL4QG}, we will describe how to define these signatures purely in terms of the algebraic objects in $\ac2$.  By making use of a result of Higson-Kasparov \cite{HigsonKasparov} which applies to PTFA groups, we do not need to appeal to geometric 4-manifolds to calculate the Von Neumann $\rho$-invariants.

In order to define a representation $\rho \colon \Z \ltimes H \to \G$, first we choose a $p \in H$, and then define:
\[\rho \colon (n,h) \mapsto (n,\Bl(p,h)) \in \G,\]
where $\Bl$ is the Blanchfield pairing, which is defined on $H$ as follows.

Compose $\xi$ with the rationalisation map, to get:
\[\xi \colon H \xrightarrow{\simeq} H_1(\Z[\Z] \otimes_{\zh} N) \rightarrowtail H_1(\Q[\Z] \otimes_{\zh} N).\]
The second map is injective by Theorem \ref{Thm:Levinemodule} (b): $H$ is $\Z$-torsion free.  In this section we abuse notation and also refer to this composition of $\xi$ with the rationalisation map as $\xi$.

We define $\Bl \colon H \times H \to \Q(t)/\Q[t,t^{-1}]$ by:
\[\Bl(p,h) := \Bl(\xi(p),\xi(h)).\]



\begin{proposition}\label{Prop:NinL4QG}
The chain complex:
$(\Q\G \otimes_{\zh} N, \Id \otimes \theta) $
defines an element of $L^4(\Q\G,\Q\Gamma - \{0\})$.  That is, $\K \otimes_{\Q\G} \Q\G \otimes_{\zh} N$ is contractible.
\end{proposition}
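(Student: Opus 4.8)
The plan is to show that the only thing one needs to check is that $\mathcal{K} \otimes_{\Q\G} \Q\G \otimes_{\zh} N$ is contractible, since everything else is automatic: $\Q\G \otimes_{\zh} N$ is a finitely generated projective $\Q\G$-module chain complex concentrated in degrees $0,\dots,3$, and $(N,\theta)$ is a $3$-dimensional symmetric Poincar\'e complex over $\zh$ (being built by the union construction from the pieces of the Poincar\'e triad $\Y$ together with those of $\Y^U$, exactly as in Section \ref{Chapter:extracting_1st_order_obstructions}), so $\Id \otimes \theta$ is a symmetric structure on $\Q\G \otimes_{\zh} N$; once the $\mathcal{K}$-contractibility is established, the mapping cone $\mathscr{C}((\Id\otimes\theta)_0 \colon (\Q\G\otimes_{\zh}N)^{3-*} \to \Q\G\otimes_{\zh}N)$ is, by Definition \ref{Defn:algThomcomplexandthickening} and Definition \ref{defn:localisationexactsequence}, a $3$-dimensional symmetric Poincar\'e complex over $\Q\G$ which becomes contractible over $\mathcal{K}$, i.e.\ precisely an element of $L^4(\Q\G,\Q\G-\{0\})$.

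So the heart of the matter is to prove $H_*(\mathcal{K} \otimes_{\Q\G} N) \cong 0$. The approach is the one used throughout \cite{COT} (Propositions 2.9--2.11 there): it suffices, by Proposition \ref{Prop:COT2.10chainversion}, to show that $H_*(\Q \otimes_{\Q\G} N) \cong H_*(S^1 ; \Q)$ is "small enough", and then to show that the class of a meridian maps to a nontrivial element of $\G$ under $\rho$. More precisely: $\Q \otimes_{\Q\G} N \simeq \Q \otimes_{\zh} N \simeq C_*(S^1 ; \Q) \oplus (\text{acyclic})$ via the gluing formula for $N$ and the homological conditions in Definition \ref{Defn:algebraicsetofchaincomplexes}, so $H_*(\Q\otimes_{\Q\G}N)$ is $\Q$ in degrees $0$ and $1$ (and $\Q$ in degrees $2,3$ by Poincar\'e duality for the closed $3$-complex $N$). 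The subcomplex on which one wants to apply Proposition \ref{Prop:COT2.10chainversion} is the mapping cone of the inclusion $\Q\G \otimes_{\zh} D_- \to \Q\G \otimes_{\zh} N$ of one boundary torus-half; since $D_-$ is the standard annular complex $\Q\G \xrightarrow{g_1 - 1} \Q\G$ and $g_1 = (1,0)$ maps under $\rho$ to $(1,0) \in \G$, which acts by $t$, the complex $\mathcal{K} \otimes_{\Q\G} D_-$ is $\mathcal{K} \xrightarrow{t-1} \mathcal{K}$, and $t - 1$ is invertible in $\mathcal{K} = \mathcal{K}(\Q\G)$ because $t$ has infinite order in $\G$ (this is exactly the computation that $\Q(\Z)\otimes_{\zh}D_- \simeq 0$ used in the proof of Theorem \ref{thm:COT4.4chainversion}, now over the larger skew field). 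Hence $\mathcal{K}\otimes_{\Q\G}D_- \simeq 0$; and $H_*(\Q\otimes_{\Q\G}\mathscr{C}(D_- \to N))$ is the relative homology of a $\Z$-homology equivalence (the meridian circle in the homology circle), hence zero, so Proposition \ref{Prop:COT2.10chainversion} gives $H_*(\mathcal{K}\otimes_{\Q\G}\mathscr{C}(D_-\to N)) \cong 0$, and combined with $\mathcal{K}\otimes_{\Q\G}D_-\simeq 0$ the long exact sequence forces $H_*(\mathcal{K}\otimes_{\Q\G}N)\cong 0$.

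The step I expect to be the main obstacle is verifying carefully that $\rho$ is actually a well-defined homomorphism $\Z \ltimes H \to \G$ and that, along the chain level, $\Q\G$ is being used as a $\zh$-algebra via $\rho$ in a way compatible with the Blanchfield form — in particular that $\rho(g_1) = (1,0)$ regardless of the choice of $p\in H$, which is what makes the meridian act as multiplication by $t$. This is where the sesquilinearity of $\Bl$ and the identity $\Bl(p, 0) = 0$ enter, and one must also check that changing the representative of $(H,\Y,\xi)$ within its equivalence class (Definition \ref{Defn:algebraicsetofchaincomplexes}) changes $\rho$ only by an inner automorphism of $\G$ and hence does not affect the class in $L^4(\Q\G,\Q\G-\{0\})$; this is routine but fiddly, mirroring the corresponding discussion in \cite{COT} and in the proof of Proposition \ref{prop: fundtriaddefinesanelement}. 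The remaining verifications — that $N$ is finitely generated projective, that $\Id\otimes\theta$ lies in $Q^3$, and the degree bookkeeping in the mapping cone — are immediate from the constructions already set up.
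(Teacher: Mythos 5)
Your overall strategy is the same as the paper's: apply Proposition \ref{Prop:COT2.10chainversion} to the mapping cone of $\Q\G\otimes_{\zh}D_-\to\Q\G\otimes_{\zh}N$, and use the fact that the meridian maps to a non-trivial element of $\G$ (so that $\rho\circ(f_-)_*(t)-1$ is invertible in $\K$) to conclude that $\K\otimes_{\Q\G}D_-\simeq 0$. However, there is a genuine error in the middle step. You assert that $H_*(\Q\otimes_{\Q\G}\mathscr{C}(D_-\to N))\cong 0$ in all degrees because $D_-\to N$ is ``the relative homology of a $\Z$-homology equivalence (the meridian circle in the homology circle).'' It is not: $N$ is the algebraic analogue of the zero-framed surgery $M_K$, a rational homology $S^1\times S^2$, not of the knot exterior. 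Indeed you yourself note two sentences earlier that $H_2(\Q\otimes_{\Q\G} N)\cong H_3(\Q\otimes_{\Q\G} N)\cong\Q$, so the inclusion of the circle is only $1$-connected and the cone has non-trivial rational homology in degrees $2$ and $3$. Consequently the hypotheses of Proposition \ref{Prop:COT2.10chainversion} hold only for $n=1$, the proposition yields $H_k(\K\otimes_{\Q\G}\mathscr{C}(D_-\to N))\cong 0$ only for $k=0,1$, and your long exact sequence argument then gives $H_0(\K\otimes_{\Q\G}N)\cong H_1(\K\otimes_{\Q\G}N)\cong 0$ but says nothing about $k=2,3$.

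The missing ingredient, which is present in the paper's proof, is to deduce the vanishing of $H_2$ and $H_3$ from that of $H_0$ and $H_1$ by Poincar\'e duality for the symmetric Poincar\'e complex $(N,\theta)$ together with universal coefficients over $\K$ (which is straightforward since $\K$ is a skew field), and finally to invoke the fact that a finitely generated projective chain complex with vanishing homology is contractible. With that step added, your argument closes up and coincides with the paper's; without it, the contractibility of $\K\otimes_{\Q\G}N$ does not follow. The remaining remarks in your last paragraph (well-definedness of $\rho$, the normalisation $g_1=(1,0)$) are not actually needed for this proposition: the paper only uses that $\rho\circ(f_-)_*(t)=(1,\Bl(p,h_1))$ has non-zero first coordinate, hence is non-trivial in $\G$, whatever $h_1$ and $p$ are.
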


\begin{proof}
First note that $\G$ is a PTFA group (Definition \ref{defn:PTFA}), since $[\G,\G] = \Q(t)/\Q[t,t^{-1}]$; therefore $[\G,\G]$ is abelian and $\G/[\G,\G] \cong \Z.$  The fact that $\G$ is PTFA means that, by \cite[Proposition 2.5]{COT}, the Ore localisation of $\Q\G$ with respect to non-zero elements $\Q\G -\{0\}$ exists.
The proof follows that of \cite[Proposition~2.11]{COT} closely, but in terms of chain complexes.  The chain complex of the circle $C_*(S^1;\Q[\Z])$ is given by
$\Q[\Z] \xrightarrow{t-1} \Q[\Z].$
Tensor with $\Q\G$ over $\Q[\Z]$ using the homomorphism $\rho \circ (f_-)_*$, where we have to define $(f_-)_* \colon \Z \to \Z \ltimes H$.  Recall that $f_-$ is a chain map in our symmetric Poincar\'{e} triad $\Y$ (Definition \ref{Defn:algebraicsetofchaincomplexes}), and so we define $(f_-)_*$ to be the corresponding homomorphism of groups: there is, as ever, a symbiosis between the group elements and the 1-chains of the complex.  The homomorphism $(f_-)_* \colon \Z \to \Z \ltimes H$ sends $t \mapsto (1,h_1)$, where $h_1$ is, as in Definition \ref{Defn:algebraicsetofchaincomplexes}, the element of $H$ which makes $f_-$ a chain map.  Thus, passing from $C_*(S^1;\Q[\Z])$ to $C_*(S^1;\Q\G)$, we obtain:
\[\Q\G \otimes_{\Q[\Z]} \Q[\Z] \cong \Q\G \xrightarrow{(\rho\circ (f_-)_*(t) - 1)} \Q\G \otimes_{\Q[\Z]} \Q[\Z] \cong \Q\G.\]
The chain map \[1 \otimes f_- \colon C_*(S^1;\Q\G) = \Q\G \otimes_{\zh} D_- \to \Q\G \otimes_{\zh} Y \to \Q\G \otimes_{\zh} N,\] is 1-connected on rational homology.  Therefore, by the long exact sequence of a pair,
\[H_k(\Q \otimes_{\Q\G} \mathscr{C}(1 \otimes f_- \colon C_*(S^1;\Q\G) \to \Q\G \otimes_{\zh} N)) \cong 0\]
for $k = 0, 1$.
We apply Proposition \ref{Prop:COT2.10chainversion}, with $n=1$ and $C_* = \mathscr{C}(1 \otimes f_-)$, to show that:
\[H_k(\K \otimes_{\Q\G} \mathscr{C}(1 \otimes f_- \colon C_*(S^1;\Q\G) \to \Q\G \otimes_{\zh} N)) \cong 0\]
for $k = 0,1$.  This implies, again by the long exact sequence of a pair, that there is an isomorphism $H_0(S^1;\K) \cong H_0(\K \otimes_{\zh} N)$
and a surjection
$H_1(S^1;\K) \twoheadrightarrow H_1(\K \otimes_{\zh} N).$
As in the proof of \cite[Proposition~2.11]{COT}, $t$ maps to a non-trivial element \[\rho\circ (f_-)_*(t) = \rho(1,h_1) = (1,\Bl(p,h_1)) \in \Gamma.\]    Therefore $\rho\circ (f_-)_*(t) -1 \neq 0 \in \Q \G$ is invertible in $\K$, so $H_*(S^1;\K) \cong 0$.  This then implies that $H_k(\K \otimes_{\zh} N) \cong 0$
for $k=0,1$.

The proof that $\Q\G \otimes_{\zh} N$ is acyclic over $\K$ is then finished by applying Poincar\'{e} duality and universal coefficients.  The latter theorem is straight-forward since $\K$ is a skew-field, so we see that:
\[H_k(\K \otimes_{\Q\G} (\Q\G \otimes_{\zh} N)) \cong 0\]
for $k=2,3$ as a consequence of the corresponding isomorphisms for $k=0,1$.  A projective module chain complex is contractible if and only if its homology modules vanish \cite[Proposition~3.14~(iv)]{Ranicki}, which completes the proof.
\end{proof}

\begin{remark}
We can always define, for any representation which maps $g_1$ to a non-trivial element of $\G$, a map $\ac2 \to L^4(\Q\G,\Q\G - \{0\}).$  However, we will only show that it has the desired property: namely that it maps $0 \in \ac2$ to $0 \in L^4(\Q\G,\Q\G - \{0\})$, in the case that $\xi(p) \in P$ (recall that $p$ was part of the definition of a representation $\rho \colon \Z \ltimes H \to \Gamma$), for at least one of the submodules $P \subseteq H_1(\Q[\Z] \otimes_{\zh} N)$ such that $P=P^{\bot}$.

This contingent vanishing for the \COT obstruction theory  is encoded in the definition of $\mc{COT}_{(\C/1.5)}$: see Definition \ref{defn:COTobstructionset_2}.  We have a two stage definition of the metabelian \COT obstruction set, since we need the Blanchfield form to define the elements and to restrict the allowable null--bordisms; whereas an element of the group $\ac2$ is defined in a single stage from the geometry, via a handle decomposition of the knot exterior, and the allowable null--bordisms are restricted by the consistency square.  Both stages of the \COT obstruction can be extracted from the single stage element of $\ac2$.
\end{remark}


\begin{definition}\label{Defn:map_AC2_to_COT_(C/1.5)}
We define the map $\ac2 \to \mc{COT}_{(\C/1.5)}$ by mapping a triple $(H,\Y,\xi)$ to
\[\bigsqcup_{p \in \Q \otimes_{\Z} H} \, \{(\Q\G \otimes_{\zh} N, \Id \otimes \theta)_p,\xi_p\}, \]
with each $(\Q\G \otimes_{\zh} N)_p$ defined using \[\ba{rcl} \rho \colon \Z \ltimes H &\to & \G \\ (n,h) &\mapsto & (n,\Bl(p,h))\ea\]
and $\xi_p$ given by the composition
\begin{eqnarray*} \xi_p \colon \Q \otimes_{\Z} H &\xrightarrow{\Id \otimes \xi}& \Q \otimes_{\Z} H_1(\Z[\Z] \otimes_{\zh} Y) \toiso H_1(\Q[\Z] \otimes_{\zh} Y) \\ &\toiso& H_1(\Q[\Z] \otimes_{\zh} N) \toiso H_1(\Q[\Z] \otimes_{\Q\G} (\Q\G \otimes_{\zh} N)_p). \end{eqnarray*}
The maps labelled as isomorphisms in this composition are given by the universal coefficient theorem, a Mayer-Vietoris sequence, and a simple chain level isomorphism for the final identification.
\qed\end{definition}

We prove a more general statement than that of Theorem \ref{Thm:zeroAC2_goes_to_zeroL4QG}.  The purpose of this generalisation is to show that the map of pointed sets of Definition \ref{Defn:map_AC2_to_COT_(C/1.5)} is well--defined.  Theorem \ref{Thm:zeroAC2_goes_to_zeroL4QG} is a corollary of Theorem \ref{Thm:betterstatementzeroAC2_to_zeroL4QG} by taking $(H^\dag,\Y^\dag,\xi^\dag) = (\{0\},\mathcal{Y}^U,\Id_{\{0\}})$.

\begin{theorem}\label{Thm:betterstatementzeroAC2_to_zeroL4QG}
Let $(H,\Y,\xi) \sim (H^\dag,\Y^\dag,\xi^\dag) \in \ac2$  be equivalent triples.  Then
\[\bigsqcup_{p \in H} \, \{(\Q\G \otimes_{\zh} N)_p,\xi_p\} \sim \bigsqcup_{q \in H^\dag} \, \{(\Q\G \otimes_{\zhdag} N^\dag)_q,\xi^\dag_q\}  \in \mc{COT}_{(\C/1.5)}.\]
That is, there exists a metaboliser \[P = P^{\bot} \subseteq (\Q \otimes_{\Z} H) \oplus (\Q \otimes_{\Z} H^\dag)\] for the rational Blanchfield form $$\Bl \oplus -\Bl^\dag \colon (\Q \otimes_{\Z} H) \oplus (\Q \otimes_{\Z} H^\dag) \times (\Q \otimes_{\Z} H) \oplus (\Q \otimes_{\Z} H^\dag) \to \Q(t)/\Q[t,t^{-1}],$$ such that, for any $(p,q) \in (\Q \otimes_{\Z} H) \oplus (\Q \otimes_{\Z} H^\dag)$, the corresponding element
\[((\Q\G \otimes_{\zh} N)_p,\theta_p) \oplus ((\Q\G \otimes_{\zh} N^\dag)_q,-\theta^\dag_q) \in L^4(\Q\G,\Q\G-\{0\}),\]
bounds a 4-dimensional symmetric Poincar\'{e} pair
\[(j_p \oplus j_q^{\dag} \colon (\Q\G \otimes_{\zh} N)_p \oplus (\Q\G \otimes_{\zh} N^\dag)_q \to V_{(p,q)},(\delta \theta_{(p,q)}, \theta_p \oplus -\theta^{\dag}_q))\]
over $\Q\G$ such that
$$H_1(\Q \otimes_{\Q\G} (\Q\G \otimes_{\zh} N)_p) \toiso H_1(\Q \otimes_{\Q\G} V_{(p,q)}) \xleftarrow{\simeq} H_1(\Q \otimes_{\Q\G} (\Q\G \otimes_{\zh} N^\dag)_q),$$
such that the isomorphism
\begin{multline*}\xi_p \oplus \xi_q^\dag \colon (\Q \otimes_{\Z} H) \oplus (\Q \otimes_{\Z} H^\dag) \toiso \\ H_1(\Q[\Z] \otimes_{\Q\G} (\Q\G \otimes_{\zh} N)_p) \oplus H_1(\Q[\Z] \otimes_{\Q\G} (\Q\G \otimes_{\zhdag} N^\dag)_q)\end{multline*}
restricts to an isomorphism
\[P \toiso \ker \big(H_1(\Q[\Z] \otimes_{\zh} N) \oplus H_1(\Q[\Z] \otimes_{\zhdag} N^\dag) \to H_1(\Q[\Z] \otimes_{\Q\G} V_{(p,q)})\big),\]
and such that the algebraic Thom complex (Definition \ref{Defn:algThomcomplexandthickening}), taken over the Ore localisation, is algebraically null-cobordant in $L^4_S(\K) \cong L^0_S(\K)$:
\[[(\K \otimes_{\Q\G} \mathscr{C}((j_p \oplus j_q^\dag)),\Id \otimes \delta \theta_{(p,q)}/(\theta_{p}\oplus-\theta^\dag_q))] = [0] \in L^4(\K).\]
\end{theorem}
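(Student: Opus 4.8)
The plan is to transcribe the Cochran--Orr--Teichner proof of Theorem~\ref{Thm:COTmaintheorem} into the chain--complex language, reusing Propositions~\ref{Prop:COT2.10chainversion} and \ref{Prop:NinL4QG}, Theorem~\ref{thm:COT4.4chainversion} and the localisation exact sequence; Theorem~\ref{Thm:zeroAC2_goes_to_zeroL4QG} then drops out as the special case $(H^\dag,\Y^\dag,\xi^\dag) = (\{0\},\Y^U,\Id_{\{0\}})$. First I would upgrade the $4$--dimensional symmetric Poincar\'{e} triad of Definition~\ref{defn:2ndorderconcordant} to a $4$--dimensional symmetric Poincar\'{e} \emph{pair} over $\zhd$,
\[(k \colon (\zhd \otimes_{\zh} N) \oplus (\zhd \otimes_{\zhdag} N^{\dag}) \to \mathbb{V},\ (\delta\Theta,\, \theta \oplus -\theta^{\dag})),\]
by capping the triad along its $D_{\pm}$--components with the annular model complexes and the complexes of the unknot, exactly as $N$ is assembled from $\Y$ in Section~\ref{Chapter:extractingCOTobstructions}. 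This is a routine application of the union and algebraic Thom/thickening constructions of \cite[Part~I]{Ranicki3}; the homological conditions of Definition~\ref{defn:2ndorderconcordant} ensure $\mathbb{V}$ is a $\Z$--homology $S^1$, and the consistency isomorphism $\xi'$ identifies $H_1(\Z[\Z] \otimes_{\zhd} \mathbb{V})$ with $H'$ so that the induced map from $H \oplus H^\dag$ is $(j_\flat,j_\flat^\dag)$.

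Next I would set
\[P := \ker\big(H_1(\Q[\Z] \otimes_{\zh} N) \oplus H_1(\Q[\Z] \otimes_{\zhdag} N^{\dag}) \xrightarrow{(k_*,\, k_*^\dag)} H_1(\Q[\Z] \otimes_{\zhd} \mathbb{V})\big).\]
The argument proving Theorem~\ref{thm:COT4.4chainversion}, an algebraic rendering of \cite[Theorem~4.4]{COT}, goes through with $\mathbb{V}$ in place of the null--cobordism there, showing $P = P^{\bot}$ with respect to $\Bl \oplus -\Bl^{\dag}$; alternatively one reduces to the null case using Proposition~\ref{prop:inverseswork} and Lemma~\ref{Lemma:Cancellation_Blanchfield}. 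By the rationalised consistency square of Definition~\ref{defn:2ndorderconcordant}, the map $H\oplus H^\dag \to H_1(\Q[\Z]\otimes_{\zhd}\mathbb{V})$ is $(j_\flat,j_\flat^\dag)$ up to isomorphism, so $\ker(j_\flat,j_\flat^\dag)\otimes_{\Z}\Q \subseteq P$.

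Now fix $(p,q) \in P$. Since $\Q(t)/\Q[t,t^{-1}]$ is divisible, hence injective, over the principal ideal domain $\Q[t,t^{-1}]$, the $\Q[\Z]$--linear functional on $H \oplus H^\dag$ built from $\Bl$, $\Bl^\dag$ and $(p,q)$ extends over $(j_\flat,j_\flat^\dag)$ exactly when it vanishes on $\ker(j_\flat,j_\flat^\dag)$, and this holds because $\ker(j_\flat,j_\flat^\dag)\otimes\Q \subseteq P = P^{\bot}$ while $(p,q) \in P$ (the coefficient automorphism $(n,x)\mapsto(n,-x)$ of $\G$ reconciles the orientation--reversal signs). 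This is the chain--level form of \cite[Theorem~3.6]{COT}, and yields a representation $\wt\rho \colon \Z \ltimes H' \to \G$ restricting to $\rho_p$ on $\Z\ltimes H$ and to $\rho_q^\dag$ on $\Z\ltimes H^\dag$. Pushing $\mathbb{V}$ forward along $\wt\rho$ gives a $4$--dimensional symmetric Poincar\'{e} pair over $\Q\G$,
\[(j_p \oplus j_q^\dag \colon (\Q\G \otimes_{\zh} N)_p \oplus (\Q\G \otimes_{\zhdag} N^\dag)_q \to V_{(p,q)} := \Q\G \otimes_{\zhd} \mathbb{V},\ (\delta\theta_{(p,q)},\, \theta_p \oplus -\theta_q^\dag)).\]

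It remains to verify the conditions of Definition~\ref{defn:COTobstructionset_2}. Running the argument of Proposition~\ref{Prop:NinL4QG} with $\mathbb{V}$ in place of $N$: the chain map $C_*(S^1;\Q\G) \to V_{(p,q)}$ induced by a meridian has $\Q$--acyclic mapping cone because $\mathbb{V}$ is a $\Z$--homology circle, so by Proposition~\ref{Prop:COT2.10chainversion} the cone is $\K$--acyclic, and $C_*(S^1;\K) \simeq 0$ since the meridian maps to $(1,*) \neq 1 \in \G$; hence $\K \otimes_{\Q\G} V_{(p,q)} \simeq 0$. The $\Q$--homology condition follows by tensoring the $\Z$--homology isomorphisms of Definition~\ref{defn:2ndorderconcordant} with $\Q$, and the identification of $P$ with $\ker\big(H_1(\Q[\Z]\otimes N)_p \oplus H_1(\Q[\Z]\otimes N^\dag)_q \to H_1(\Q[\Z]\otimes_{\Q\G} V_{(p,q)})\big)$ comes from the definition of $P$ and the pushforward square. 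Finally, since both the total complex $\K\otimes_{\Q\G}V_{(p,q)}$ and the boundary $\K\otimes_{\Q\G}((\Q\G\otimes N)_p \oplus (\Q\G\otimes N^\dag)_q)$ are contractible, the algebraic Thom complex $\K \otimes_{\Q\G}\mathscr{C}(j_p \oplus j_q^\dag)$ is contractible, so it represents $[0] \in L^4_S(\K)$, completing the equivalence in $\mc{COT}_{(\C/1.5)}$. The main obstacle I anticipate is twofold: honestly producing $\mathbb{V}$ as a symmetric Poincar\'{e} pair over $\zhd$ that is a $\Z$--homology circle and carries the consistency data, and the faithful transcription of \cite[Theorems~3.6~and~4.4]{COT} to the level of $\Q\G$--coefficient chain complexes, where the localisation and linking--form arguments must be seen to apply to $\mathbb{V}$ rather than to a genuine $4$--manifold.
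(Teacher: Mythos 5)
Your proposal is correct and follows essentially the same route as the paper: glue the solid-torus/unknot triad onto the algebraic concordance to obtain the pair $\widehat{V}$ over $\zhd$, define $P$ as the kernel on rational first homology (a metaboliser by Theorem~\ref{thm:COT4.4chainversion}), extend the representation for $(p,q)\in P$ via the chain-level version of \cite[Theorem~3.6]{COT}, push forward to $\Q\G$, and use Proposition~\ref{Prop:COT2.10chainversion} to show the relative complex is $\K$-acyclic so the Witt class vanishes. The only cosmetic difference is at the end: you deduce contractibility of the Thom complex from contractibility of $\K\otimes V_{(p,q)}$ and of the boundary, whereas the paper applies Proposition~\ref{Prop:COT2.10chainversion} to the cone of one boundary inclusion and then uses a short exact sequence — both yield $H_*(\ol{V}_{(p,q)})\cong 0$ and hence $[0]\in L^4_S(\K)$.
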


\begin{proof}
By the hypothesis we have a symmetric Poincar\'{e} triad over $\zhd$:
\[\xymatrix @R+0.5cm @C+0.8cm {\ar @{} [dr] |{\stackrel{(\gamma,\g^{\dag})}{\sim}} (E,\phi) \oplus (E^{\dag},-\phi^{\dag}) \ar[r]^-{(\Id, \Id \otimes \varpi_{E^{\dag}})} \ar[d]_{\left( \begin{array}{cc} \eta & 0 \\ 0 & \eta^{\dag}\end{array}\right)} & (E,0) \ar[d]^{\delta}
\\ (Y,\Phi) \oplus (Y^{\dag},-\Phi^{\dag}) \ar[r]^-{(j,j^{\dag})} & (V,\Theta),
}\]
with isomorphisms
\[H_*(\Z \otimes_{\zh} Y) \toiso H_*(\Z \otimes_{\zhd} V) \xleftarrow{\simeq} H_*(\Z \otimes_{\zhdag} Y^\dag),\]
and a commutative square
\[\xymatrix @R+0.5cm @C+1cm{H \oplus H^{\dag} \ar[r]^{(j_{\flat},j_{\flat}^{\dag})} \ar[d]^{\left(\begin{array}{cc} \xi & 0 \\ 0 & \xi^{\dag} \end{array} \right)} & H' \ar[d]^{\xi'} \\
H_1(\Z[\Z] \otimes_{\zh} Y) \oplus H_1(\Z[\Z] \otimes_{\zh} Y^{\dag}) \ar[r]^-{\Id_{\Z[\Z]} \otimes (j_*,j^{\dag}_*)} & H_1(\Z[\Z] \otimes_{\zhd} V).
}\]
Corresponding to the manifold triad
\[\xymatrix @C+1cm{S^1 \times S^1 \sqcup S^1 \times S^1 \ar[r] \ar[d] & S^1 \times S^1 \times I \ar[d] \\ S^1 \times D^2 \sqcup S^1 \times D^2 \ar[r] & S^1 \times D^2 \times I,}\]
we have a symmetric Poincar\'{e} triad.
\[\xymatrix @R+0.5cm @C+1cm { (E^U,-\phi^U) \oplus (E^{U},\phi^U) \ar[r]^-{(\Id, \Id)} \ar[d]_{\left( \begin{array}{cc} \eta^U & 0 \\ 0 &  \eta^{U}\end{array}\right)} & (E^U,0) \ar[d]^{\delta^U}
\\ (Y^U,0) \oplus (Y^U,0) \ar[r]^-{(j^U,j^U)} & (Y^U,0).
}\]
With this triad tensored up over $\zhd$ sending $t \mapsto g_1$ as usual, we glue the two triads together as follows:
\[\xymatrix @R+0.4cm @C+1cm{(Y^U,0) \oplus (Y^U,0) \ar[r]^-{(j^U,j^U)} & (Y^U,0) \\
(E^U,-\phi^U) \oplus (E^U,\phi^U) \ar[u]^{\left( \begin{array}{cc} \eta^U & 0 \\ 0 &  \eta^{U}\end{array}\right)} \ar[r]^-{(\Id,\Id)} & (E^U,0) \ar[u]^{\delta^U} \\
(E,\phi) \oplus (E^\dag,-\phi^\dag) \ar @{} [dr] |{\stackrel{(\gamma,\g^{\dag})}{\sim}} \ar[u]_{\cong}^{\left( \begin{array}{cc} \varpi_E & 0 \\ 0 &  \varpi_{E^\dag}\end{array}\right)} \ar[r]^-{(\Id, \Id \otimes \varpi_{E^{\dag}})} \ar[d]_-{\left( \begin{array}{cc} \eta & 0 \\ 0 & \eta^{\dag}\end{array}\right)} & (E,0) \ar[u]^{\cong}_{\varpi_{E}} \ar[d]^{\delta} \\
(Y,\Phi) \oplus (Y^{\dag},-\Phi^{\dag}) \ar[r]^-{(j,j^{\dag})} & (V,\Theta),}\]
to obtain a symmetric Poincar\'{e} pair over $\zhd$:
\[((i,i^\dag)\colon N \oplus N^\dag \to \widehat{V} := V \cup_E Y^U,(\widehat{\Theta}:=\Theta \cup 0,\theta \oplus - \theta^{\dag})).\]
We can define $P$, by Theorem \ref{thm:COT4.4chainversion}, to be
\begin{multline*} P := \ker((\Q \otimes_{\Z} H) \oplus (\Q \otimes_{\Z} H^\dag) \to H_1(\Q[\Z] \otimes_{\zh} N) \oplus H_1(\Q[\Z] \otimes N^\dag) \\ \to  H_1(\Q[\Z] \otimes_{\zhd} \widehat{V})).\end{multline*}
Now, for all $(p,q) \in P$, the representation
\begin{multline*} (\Bl\oplus -\Bl^\dag)((\xi(p),\xi^\dag(q)),\bullet) \colon H_1(\Q[\Z] \otimes_{\zh} N) \oplus H_1(\Q[\Z] \otimes_{\zh} N^\dag) \to \frac{\Q(t)}{\Q[t,t^{-1}]},\end{multline*}
extends, by \cite[Theorem~3.6]{COT}, to a representation $H_1(\Q[\Z] \otimes_{\zhd} \widehat{V}) \to \Q(t)/\Q[t,t^{-1}].$  This holds since the proof of \cite[Theorem~3.6]{COT} is entirely homological algebra, so carries over to the chain complex situation without the need for additional arguments.  We therefore have an extension:
\[\xymatrix @R+0.4cm @C+1cm{H \oplus H^\dag \ar[r]^-{(j_{\flat},j^\dag_{\flat})} \ar[d]^{\cong}_{\left(\begin{array}{cc} \xi & 0 \\ 0 & \xi^{\dag} \end{array} \right)} & H' \ar[d]^{\cong}_{\xi'} \\
H_1(\Z[\Z] \otimes_{\zh} N) \oplus H_1(\Z[\Z] \otimes_{\zhdag} N^\dag) \ar[r]^-{\Id_{\Z[\Z]} \otimes (i,i^\dag)} \ar @{>->} [d] & H_1(\Z[\Z] \otimes_{\zhd} \widehat{V}) \ar @{>->} [d] \\
H_1(\Q[\Z] \otimes_{\zh} N) \oplus H_1(\Q[\Z] \otimes_{\zhdag} N^\dag) \ar[r]^-{\Id_{\Q[\Z]} \otimes (i,i^\dag)} \ar[dr] |{(\Bl\oplus\-\Bl^\dag)((\xi(p),\xi^\dag(q)),\bullet)} & H_1(\Q[\Z] \otimes_{\zhd} \widehat{V}) \ar[d] \\
 & \Q(t)/\Q[t,t^{-1}].
}\]
Noting that, from the Mayer-Vietoris sequence for $\widehat{V} = V \cup_E Y^U$, there is an isomorphism \[H_1(\Z[\Z] \otimes_{\zhd} V) \toiso H_1(\Z[\Z] \otimes_{\zhd} \widehat{V}),\] the top square commutes by the consistency condition.  We therefore have an extension of representations:\vspace{-0.2cm}
\[\xymatrix @C+1cm{\Z \ltimes (H\oplus H^\dag) \ar[r]^{(\Id_{\Z},(j_{\flat},j_{\flat}^\dag))} \ar[dr]_{\rho} & \Z \ltimes H' \ar[d]^{\wt{\rho}} \\
 & \G.
}\]
The element $$((\Q\G \otimes_{\zh} N)_p,\theta_p) \oplus ((\Q\G \otimes_{\zhdag} N^\dag)_q,-\theta_q^\dag)  \in L^4(\Q\G,\Q\G -\{0\})$$ therefore lies, by virtue of the existence of $\Q\G \otimes_{\zhd} \widehat{V}_{(p,q)}$, in
$\ker (L^4(\Q\G,\Q\G -\{0\}) \to L^3(\Q\G)).$
As in the $L$-theory localisation sequence (Definition \ref{defn:localisationexactsequence}), we therefore have the element:
\[(\ol{V}_{(p,q)},\ol{\Theta}_{(p,q)}) := ((\K \otimes_{\zhd} \mathscr{C}((i,i^\dag)))_{(p,q)},\Theta_{(p,q)}/(\theta_p \oplus - \theta_q^\dag)) \in L^4_S(\K),\]
whose boundary is $$((\Q\G \otimes_{\zh} N)_p,\theta_p) \oplus ((\Q\G \otimes_{\zhdag} N^\dag)_q,-\theta_q^\dag)  \in L^4(\Q\G,\Q\G -\{0\}).$$  Since $2$ is invertible in $\K$, we can do algebraic surgery below the middle dimension \cite[Part~I,~Proposition~4.4]{Ranicki3}, on $\ol{V}_{(p,q)}$, to obtain a non-singular Hermitian form:
\[(\lambda \colon H^2(\ol{V}_{(p,q)}) \times H^2(\ol{V}_{(p,q)}) \to \K) \in L^0_S(\K) \cong L^4_S(\K),\]
whose image in
$L^0_S(\K)/L^0(\Q\G)$
detects the class of $\Q\G \otimes_{\zh} N \in L^4(\Q\G,\Q\G -\{0\})$.
Once again, we will apply Proposition \ref{Prop:COT2.10chainversion}.  Since $j$ and $j^\dag$ induce isomorphisms on $\Z$-homology, and therefore on $\Q$-homology, we have that the chain map
\[\Id \otimes i \colon \Q \otimes_{\Q\G} (\Q\G \otimes_{\zh} N)_p \to \Q \otimes_{\Q\G} (\Q\G \otimes_{\zhd} \widehat{V}_{(p,q)})\]
induces isomorphisms
$ i_* \colon H_k(\Q \otimes_{\zh} N) \toiso H_k(\Q \otimes_{\zhd} \widehat{V})$
for all $k$, by a straight--forward Mayer-Vietoris argument.  Therefore $H_k(\Q \otimes_{\zhd} \mathscr{C}(i)) \cong 0$ for all $k$ by the long exact sequence of a pair.  Applying Proposition \ref{Prop:COT2.10chainversion}, we therefore have that
$H_k((\K \otimes_{\zhd} \mathscr{C}(i))_{(p,q)}) \cong 0$
for all $k$.  The long exact sequence in $\K$-homology associated to the short exact sequence
\[0\to (\K \otimes_{\zhd}\mathscr{C}(i))_{(p,q)} \to (\K \otimes_{\zhd} \mathscr{C}((i,i^\dag)))_{(p,q)} \to S(\K\otimes_{\zhdag} N^\dag_q) \to 0\]
implies, noting that $H_*(\K \otimes_{\zhdag} N^\dag_q) \cong 0$, that
\[H_k(\K \otimes_{\zhd} \mathscr{C}((i,i^\dag))_{(p,q)}) = H_k(\ol{V}_{(p,q)}) \cong 0\]
for all $k$.  In particular, since $H_2(\ol{V}_{(p,q)}) \cong H^2(\ol{V}_{(p,q)}) \cong 0,$ we see that the image of $\ol{V}_{(p,q)}$ in $L^0_S(\K)$, which is the intersection form $\lambda$, is trivially hyperbolic and represents the zero class of $L^0_S(\K)$.  This completes the proof that
\begin{multline*} \bigsqcup_{p \in H} \, \{(\Q\G \otimes_{\zh} N,\Id \otimes \theta)_p,\xi_p\} \sim \bigsqcup_{q \in H^\dag} \, \{(\Q\G \otimes_{\zhdag} N^\dag,\Id \otimes \theta^\dag)_q,\xi^\dag_q\} \in \mc{COT}_{(\C/1.5)}.\end{multline*}
\end{proof}

Finally, we have a non-triviality result, which shows that we can extract the $L^{(2)}$-signatures from $\ac2$.  In order to obstruct the equivalence of triples $(H,\Y,\xi) \sim (H^\dag,\Y^\dag,\xi^\dag) \in \ac2$, we just need, by Proposition \ref{prop:inverseswork}, to be able to obstruct an equivalence $(H,\Y,\xi) \sim (\{0\},\Y^U,\Id_{\{0\}})$.  To achieve this, as in Definition \ref{defn:COTobstructionset_2}, we need to obstruct the existence of a 4-dimensional symmetric Poincar\'{e} pair over $\Q\G$
$(j \colon (\Q\G \otimes_{\zh} N)_p \to V_p,(\Theta_p,\theta_p)),$
for at least one $p \neq 0$, with $\xi(p) \in P$, for each metaboliser $P = P^\bot \subseteq H_1(\Q[\Z] \otimes_{\zh} N)$ of the Blanchfield form, where $V_p$ satisfies that
\[\xi(p) \in \ker(j_* \colon H_1(\Q[\Z] \otimes_{\zh} N_p) \to H_1(\Q[\Z] \otimes_{\Q\G} V_p)),\]
that
$j_*\colon H_1(\Q \otimes_{\zh} N) \xrightarrow{\simeq} H_1(\Q \otimes_{\Q\G} V_p)$
is an isomorphism, and that
$[\K \otimes_{\Q\G} \mathscr{C}(j)] = [0] \in L^4_S(\K).$
We do this by taking $L^{(2)}$-signatures of the middle dimensional pairings on putative such $V_p$, to obstruct the Witt class in $L^4_S(\K) \cong L^0_S(\K)$ from vanishing.  First, we have a notion of algebraic $(1)$-solvability.

\begin{definition}\label{Defn:alg(1)solvable}
We say that an element $(H,\Y,\xi) \in \ac2$ with image $0 \in \mathcal{AC}_1$ is \emph{algebraically $(1)$-solvable} if the following holds. There exists a metaboliser $P = P^{\bot} \subseteq H_1(\Q[\Z] \otimes_{\zh} N)$ for the rational Blanchfield form such that for any $p \in H$ such that $\xi(p) \in P$, we obtain an element:
\[\Q\G \otimes_{\zh} N_p \in \ker(L^4(\Q\G,\Q\G -\{0\}) \to L^3(\Q\G)),\]
via a symmetric Poincar\'{e} pair over $\Q\G$: \[(j \colon \Q\G \otimes_{\zh} N_p \to V_p, (\Theta_p,\theta_p)),\] with
\[P = \ker(j_* \colon H_1(\Q[\Z] \otimes_{\zh} N) \to H_1(\Q[\Z] \otimes_{\Q\G} V_p)),\]
and such that:
\[j_*\colon H_1(\Q \otimes_{\zh} N) \xrightarrow{\simeq} H_1(\Q \otimes_{\Q\G} V_p)\]
is an isomorphism.  We call each such $(j \colon \Q\G \otimes_{\zh} N_p \to V_p, (\Theta_p,\theta_p))$ an \emph{algebraic $(1)$-solution}.
\qed\end{definition}

\begin{theorem}\label{Thm:extractingL2signatures}
Suppose that $(H,\Y,\xi) \in \ac2$ is algebraically $(1)$-solvable with algebraic $(1)$-solution $(V_p,\Theta_p)$ and $\xi(p) \in P$.  Then since:
\[\ker(L^4(\Q\G,\Q\G -\{0\}) \to L^3(\Q\G)) \cong L^4(\K)/L^4(\Q\G) \cong L^0(\K)/L^0(\Q\G),\]
we can apply the $L^{(2)}$-signature homomorphism (see \cite[Section~5]{COT}):
$\sigma^{(2)} \colon L^0(\K) \to \R,$
to the intersection form:
\[\lambda_{\K} \colon H_2(\K \otimes_{\Q\G} V_p) \times H_2(\K \otimes_{\Q\G} V_p) \to \K.\]
We can also calculate the signature $\sigma(\lambda_{\Q})$ of the ordinary intersection form:
\[\lambda_{\Q} \colon H_2(\Q \otimes_{\Q\G} V_p) \times H_2(\Q \otimes_{\Q\G} V_p) \to \Q,\]
and so calculate the reduced $L^{(2)}$-signature $\wt{\sigma}^{(2)}(V_p) = \sigma^{(2)}(\lambda_{\K}) - \sigma(\lambda_{\Q}).$  This is independent, for fixed $p$, of changes in the choice of chain complex $V_p$.
\end{theorem}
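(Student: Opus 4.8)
The plan is to show that the reduced $L^{(2)}$-signature $\wt{\sigma}^{(2)}(V_p)$ depends only on the image of $\Q\G \otimes_{\zh} N_p$ in $L^4(\K)/L^4(\Q\G)$ (equivalently $L^0(\K)/L^0(\Q\G)$), and hence not on the choice of algebraic $(1)$-solution $V_p$ with the prescribed properties. First I would observe that, as established in the proof of Theorem \ref{Thm:betterstatementzeroAC2_to_zeroL4QG}, an algebraic $(1)$-solution $(j\colon \Q\G\otimes_{\zh}N_p\to V_p,(\Theta_p,\theta_p))$ gives rise, via the algebraic Thom complex over the Ore localisation, to a well-defined class in $L^4_S(\K)$ whose image under the map $L^4_S(\K)\to L^4(\Q\G,\Q\G-\{0\})$ in the localisation sequence (Definition \ref{defn:localisationexactsequence}) is exactly $[\Q\G\otimes_{\zh}N_p]$. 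Since $2$ is invertible in $\K$ and in $\Q\G$, algebraic surgery below the middle dimension \cite[Part~I,~Proposition~4.4]{Ranicki3} identifies this class with the Witt class of the intersection form $\lambda_{\K}\colon H_2(\K\otimes_{\Q\G}V_p)\times H_2(\K\otimes_{\Q\G}V_p)\to\K$, so $\sigma^{(2)}(\lambda_{\K})$ is a well-defined function of the class in $L^0(\K)$.

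The next step is the independence argument. Given two algebraic $(1)$-solutions $(V_p,\Theta_p)$ and $(V_p',\Theta_p')$ for the same $p$ with the same boundary $(\Q\G\otimes_{\zh}N_p,\theta_p)$, I would glue them along their common boundary, using the union construction of \cite[Part~I,~pages~117--9]{Ranicki3} and the chain-level inverse $-(V'_p,\Theta'_p)$ as in Definition \ref{Defn:Lgroups}, to produce a closed $4$-dimensional symmetric Poincar\'e complex over $\Q\G$ representing the difference class $[\K\otimes V_p]-[\K\otimes V_p']\in L^4(\Q\G)\cong L^0(\Q\G)$. Thus the two classes differ only by an element in the image of $L^0(\Q\G)\to L^0(\K)$. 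Since the $L^{(2)}$-signature homomorphism $\sigma^{(2)}\colon L^0(\K)\to\R$ and the ordinary signature $\sigma\colon L^0(\Q\G)\to\Z$ (via $\Q\G\to\Q$, $g\mapsto 1$, which computes the ordinary signature of $\lambda_\Q$) are compatible in the sense that $\sigma^{(2)}$ restricted to the image of $L^0(\Q\G)$ agrees with $\sigma$ — this is precisely the $L^{(2)}$-induction/Atiyah-type property exploited in \cite[Section~5]{COT}, which for the PTFA group $\G$ follows from the Higson--Kasparov result \cite{HigsonKasparov} — the difference $\sigma^{(2)}(\lambda_{\K})-\sigma(\lambda_{\Q})$ is unchanged when one replaces $V_p$ by $V_p'$. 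This yields the claimed independence of $\wt{\sigma}^{(2)}(V_p)$ for fixed $p$.

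The main obstacle I anticipate is verifying cleanly that the ordinary signature term $\sigma(\lambda_\Q)$ behaves correctly under the gluing: one must check that $H_2(\Q\otimes_{\Q\G}V_p)$ computes the same intersection form whether one uses the augmentation $\Q\G\to\Q$ directly or passes through the $\Q\G$-coefficient complex and then augments, and that the signature of the glued closed complex over $\Q$ vanishes because $M_K$ (algebraically, $\Q\otimes_{\Q\G}N_p$) is a $\Q$-homology $S^1\times S^2$ — this needs the homological control coming from the condition that $j_*\colon H_1(\Q\otimes_{\zh}N)\toiso H_1(\Q\otimes_{\Q\G}V_p)$ is an isomorphism, together with Poincar\'e--Lefschetz duality to see $H_2(\Q\otimes V_p)$ carries all the signature. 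The bookkeeping of which localisation and which coefficient system is in play at each stage, and the careful appeal to \cite[Section~5]{COT} and \cite{HigsonKasparov} to know $\sigma^{(2)}|_{\im L^0(\Q\G)}=\sigma$, is where the care is required; the remaining verifications are routine applications of the localisation exact sequence and surgery below the middle dimension already invoked above.
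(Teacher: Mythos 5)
Your proposal follows essentially the same route as the paper: glue the two algebraic $(1)$-solutions along $\Q\G \otimes_{\zh} N_p$ (which is $\K$-contractible, so the union splits as a direct sum over $\K$), deduce that $\lambda_{\K} - \lambda_{\K}^{\diamondsuit}$ lies in the image of $L^0(\Q\G)$, and invoke \cite[Proposition~5.12]{COT} together with Higson--Kasparov and Novikov additivity to conclude that the reduced signature is unchanged. One small correction to your anticipated-obstacle paragraph: the ordinary signature of the glued closed complex over $\Q$ does \emph{not} vanish in general -- rather, by Novikov additivity it equals $\sigma(\lambda_{\Q}) - \sigma(\lambda_{\Q}^{\diamondsuit})$, and the same quantity computed via $\sigma^{(2)}$ on the image of $L^0(\Q\G)$ equals $\sigma^{(2)}(\lambda_{\K}) - \sigma^{(2)}(\lambda_{\K}^{\diamondsuit})$, which is exactly what makes the two reduced signatures agree; the $\Q$-homology hypothesis on $j_*$ is instead what the paper uses to show $\lambda_{\Q}$ is non-singular.
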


\begin{remark}
Provided we check that the reduced $L^{(2)}$-signature does not vanish, for each metaboliser $P$ of the rational Blanchfield form with respect to which $(H,\Y,\xi)$ is algebraically $(1)$-solvable, and for each $P$, for at least one $p \in P \setminus \{0\}$, then we have a \emph{chain--complex--Von--Neumann $\rho$--invariant} obstruction.  This obstructs the image of the element $(H,\Y,\xi)$ in $\mathcal{COT}_{(\C/1.5)}$ from being $U$, and therefore obstructs $(H,\Y,\xi)$ from being second order algebraically slice.

We do not require any references to 4-manifolds, other than for pedagogic reasons, to extract the \COT $L^{(2)}$-signature metabelian concordance obstructions from the triple of a $(1)$-solvable knot, or indeed for any algebraically $(1)$-solvable triple in $\ac2$.  This result relies strongly on the reason for the invariance of the reduced $L^{(2)}$-signatures which is least emphasised in the paper of \COT \cite{COT}.  This is the result of Higson-Kasparov \cite{HigsonKasparov} that the analytic assembly map is onto for PTFA groups.  The reader is encouraged to look at \cite[Proposition~5.12]{COT}, where it is shown that the surjectivity of the assembly map implies that the $L^{(2)}$-signature and the ordinary signature coincide on the image of $L^0(\Q\G)$.  The key point is that this result does not depend on manifolds for its statement; it is a purely algebraic result (although the proof of \cite[Proposition~5.12]{COT} uses Atiyah's $L^{(2)}$-Index theorem).

The Higson-Kasparov result does not hold for groups with torsion, a fact made use of in e.g. \cite{chaorr}.  Homology cobordism invariants which use representations to torsion groups appear to be using deeper manifold structure than is captured by symmetric Poincar\'{e} complexes alone.
\end{remark}

\begin{proof}[Proof of Theorem \ref{Thm:extractingL2signatures}]
For this proof we omit the $p$ subscripts from the notation; it is to be understood that tensor products with $\Q\G$ depend on a choice of representation.  Given a pair
$(j \colon \Q\G \otimes_{\zh} N \to V, (\Theta,\theta)),$
which exhibits $(H,\Y,\xi)$ as being algebraically $(1)$-solvable, we again take the element:
$(\K \otimes_{\Q\G} \mathscr{C}(j),\Theta/\theta) \in L^4(\K),$
and look at its image
$\lambda_{\K} \in L^0(\K).$
We can calculate an intersection form $\lambda_{\K}$ on $H^2(\K \otimes_{\Q\G} \mathscr{C}(j))$, as in \cite[page~19]{Ranicki2}, by taking \[x,y \in (\K\ \otimes_{\Q\G} \mathscr{C}(j))^2 \cong \Hom_{\K}((\K \otimes_{\Q\G} \mathscr{C}(j))_2,\K),\] and calculating:\vspace{-0.2cm}
\[y' = (\Theta/\theta)_0(y) \in (\K\ \otimes_{\Q\G} \mathscr{C}(j))_2.\]
Then $\lambda_{\K} (x,y) := y'(x) = \ol{x(y')} \in \K.$
This uses, as in the definition of $\Bl$ in Proposition \ref{Prop:chainlevelBlanchfield}, the identification of $(\K\ \otimes_{\Q\G} \mathscr{C}(j))_2$ with its double dual.
By taking the chain complex $\Q \otimes_{\Q\G} \mathscr{C}(j)$ we can also calculate the intersection form $\lambda_{\Q} \in L^0(\Q)$, with an analogous method.  To see that the intersection form on $H^2(\Q \otimes_{\Q\G} \mathscr{C}(j))$ is non-singular, consider the following long exact sequence of the pair; we claim that the maps labelled as $j^*$ and $\kappa$ are isomorphisms.
\[\xymatrix @C-0.3cm {H^1(\Q \otimes_{\Q\G} V) \ar[r]^-{\cong}_-{j^*} & H^1(\Q \otimes_{\zh} N) \ar[r]^{0} & H^2(\Q \otimes_{\Q\G} \mathscr{C}(j)) \ar[r]^-{\cong}_-{\kappa} & H^2(\Q \otimes_{\Q\G} V). }\]
The intersection form is given by the composition:
\begin{multline*}\lambda_{\Q} \colon H^2(\Q \otimes_{\Q\G} \mathscr{C}(j)) \xrightarrow{\kappa} H^2(\Q \otimes_{\Q\G} V) \toiso H_2(\Q \otimes_{\Q\G} \mathscr{C}(j))\\ \toiso \Hom_{\Q}(H^2(\Q \otimes_{\Q\G} \mathscr{C}(j)),\Q),\end{multline*}
given by the map $\kappa$ from the long exact sequence of a pair, followed by a Poincar\'{e} duality isomorphism induced by the symmetric structure, and a universal coefficient theorem isomorphism.  To show that $\lambda_\Q$ is non-singular we therefore need to show that $\kappa$ is an isomorphism.  The assumption that there is an isomorphism
$j_* \colon H_1(\Q \otimes_{\zh} N) \toiso H_1(\Q \otimes_{\Q\G} V)$
on rational first homology implies that, as claimed, there is also an isomorphism $j^* \colon H^1(\Q \otimes_{\Q\G} V) \toiso H^1(\Q \otimes_{\zh} N)$ on rational cohomology, by the universal coefficient theorem (the relevant $\Ext$ groups vanish with rational coefficients).  Therefore, by exactness, the map $\kappa \colon H^2(\Q \otimes_{\Q\G} \mathscr{C}(j)) \to H^2(\Q \otimes_{\Q\G} V)$
is injective. Over $\Q$, for dimension reasons, it must therefore, as marked on the diagram, be an isomorphism; the dimensions must be equal since the second and third maps in the composition which gives $\lambda_\Q$ show that $H^2(\Q \otimes_{\Q\G} V) \cong \Hom_{\Q}(H^2(\Q \otimes_{\Q\G} \mathscr{C}(j)),\Q),$ and the dimensions over $\Q$ of $\Hom_{\Q}(H^2(\Q \otimes_{\Q\G} \mathscr{C}(j)),\Q)$ and of $H^2(\Q \otimes_{\Q\G} \mathscr{C}(j))$ coincide.  Therefore the intersection form $\lambda_\Q$ is non-singular as claimed.

The reduced $L^{(2)}$-signature $\wt{\sigma}^{(2)}(V) = \sigma^{(2)}(\lambda_{\K}) - \sigma(\lambda_{\Q})$
detects non-trivial elements in the group $L^0_S(\K)/L^0(\Q\G)$.  This will follow from \cite[Proposition~5.12]{COT}, which uses a result of Higson-Kasparov \cite{HigsonKasparov} on the analytic assembly map for PTFA groups such as $\G$, and says that the $L^{(2)}$-signature agrees with the ordinary signature on the image of $L^0(\Q\G)$.  We claim that a non-zero reduced $L^{(2)}$-signature, for all possible metabolisers $P=P^{\bot}$ of the rational Blanchfield form, implies that $(H,\Y,\xi)$ is not second order algebraically slice.  To see this, we need to show that, for a fixed representation $\rho$, the reduced $L^{(2)}$-signature does not depend on the choice of chain complex $V$.

We first note, by the proof of Theorem \ref{Thm:betterstatementzeroAC2_to_zeroL4QG}, that a change in $(H,\Y,\xi)$ to an equivalent element in $\ac2$ produces an algebraic concordance which we can glue onto $V$ as in Proposition \ref{prop:equivrelation}, which neither changes the second homology of $V$ with $\K$ nor with $\Q$ coefficients, so does not change the corresponding signatures.

To show that the reduced $L^{(2)}$-signature does not depend on the choice of $V$, suppose that we have two algebraic $(1)$-solutions, that is two 4-dimensional symmetric Poincar\'{e} pairs over $\Q\G$:
\[(j \colon \Q\G \otimes_{\zh} N \to V, (\Theta,\theta)) \text{ and }
(j^{\diamondsuit} \colon \Q\G \otimes_{\zh} N \to V^{\diamondsuit}, (\Theta^{\diamondsuit},\theta)),\]
such that $p = p^{\diamondsuit} \in H$.  Use the union construction to form the symmetric Poincar\'{e} complex:
\[(V \cup_{\Q\G \otimes N} V^{\diamondsuit},\Theta \cup_{\theta} -\Theta^{\diamondsuit}) \in L^4(\Q\G).\]
Over $\K$, $\Q\G \otimes_{\zh} N$ is contractible, so that:
\[(V \cup_{\Q\G \otimes N} V^{\diamondsuit},\Theta \cup_{\theta} -\Theta^{\diamondsuit}) \simeq (V \oplus V^{\diamondsuit},\Theta \oplus -\Theta^{\diamondsuit}) = (V,\Theta) - (V^{\diamondsuit},\Theta^{\diamondsuit}) \in L^4_S(\K).\]
Therefore
$(V,\Theta) - (V^{\diamondsuit},\Theta^{\diamondsuit}) = 0 \in L^4(\K)/L^4(\Q\G),$
which means that the images in $L^0_S(\K)$ satisfy
$\lambda_{\K} - \lambda_{\K}^{\diamondsuit} = 0 \in L^0_S(\K)/L^0(\Q\G)$.
If $\lambda_{\K} - \lambda_{\K}^{\diamondsuit} \in L^0(\Q\G)$, then by \cite[Proposition~5.12]{COT}:
$$\sigma^{(2)}(\lambda_{\K} - \lambda_{\K}^{\diamondsuit}) = \sigma(\Q \otimes_{\Q\G} V \cup_{\Q\G \otimes N} V^{\diamondsuit},\Id_{\Q} \otimes (\Theta \cup_{\theta} -\Theta^{\diamondsuit})) = \sigma(\lambda_{\Q}) - \sigma(\lambda_{\Q}^{\diamondsuit}),$$
where the last equality is by Novikov Additivity.  Novikov Additivity also holds for $\sigma^{(2)}$: see \cite[Lemma~5.9.3]{COT}, so that:
\[\sigma^{(2)}(\lambda_{\K}) - \sigma^{(2)}(\lambda_{\K}^{\diamondsuit}) = \sigma(\lambda_{\Q}) - \sigma(\lambda_{\Q}^{\diamondsuit})\]
and therefore $\wt{\sigma}^{(2)}(V) = \wt{\sigma}^{(2)}(V^{\diamondsuit})$
as claimed.
\end{proof}

This completes the proof of Theorem \ref{maintheorem}.

\begin{remark}
The results of Kim \cite{Kim04}, \COT \cite{COT2} and Cochran--Harvey--Leidy \cite{cohale, cohale3_primary, cohale2}, which use Cheeger-Gromov Von Neumann $\rho$--invariants to show the existence of infinitely many linearly independent injections of $\Z$ and of $\Z_2$ into $\mathcal{F}_{(1)}/\mathcal{F}_{(1.5)}$, can also be applied, so that we can use the chain-complex-Von-Neumann $\rho$-invariant of Theorem \ref{Thm:extractingL2signatures} to show the existence of infinitely many injections of $\Z$ and $\Z_2$ into $\ker(\ac2 \to \mathcal{AC}_1)$, which in particular implies the claim in Corollary \ref{corol:infinite_rank_kernel}.
\end{remark}

\bibliographystyle{annotate}
\bibliography{markbib1}

\end{document}